\theoremstyle{definition}
\newtheorem{theorem}{Theorem}[section]
\newtheorem{lemma}{Lemma}[section]
\numberwithin{equation}{section}
\numberwithin{figure}{section}
\title{Contrast-independent partially explicit time discretizations for multiscale wave problems}
\author{Eric T. Chung \footnote{Department of Mathematics, The Chinese University of Hong Kong (CUHK), Hong Kong SAR}, ~Yalchin Efendiev\footnote{Department of Mathematics, Texas A\&M University, College Station, TX 77843, USA \& North-Eastern Federal University, Yakutsk, Russia}, ~ Wing Tat Leung\footnote{Department of Mathematics, University of California, Irvine, USA}, ~ Petr N. Vabishchevich\footnote{Nuclear Safety Institute, Russian Academy of Sciences, Moscow, Russia \& North-Eastern Federal University, Yakutsk, Russia}}
\begin{document}
\maketitle

\section*{Abstract}

In this work, we design and investigate contrast-independent partially explicit time discretizations for wave equations in heterogeneous high-contrast media. We consider multiscale problems, where the spatial heterogeneities are at subgrid level and are not resolved. In our previous work 
[Chung, Efendiev, Leung, and Vabishchevich,
Contrast-independent partially explicit time discretizations for multiscale flow problems, arXiv:2101.04863],
we have introduced contrast-independent partially explicit time discretizations and applied to parabolic equations. The main idea of contrast-independent partially explicit time discretization is to split the spatial space into two components: contrast dependent (fast) and contrast independent (slow) spaces defined via multiscale space decomposition. Using this decomposition, our goal  is further appropriately to introduce time splitting such that the resulting scheme is stable and can guarantee contrast-independent discretization under some suitable (reasonable) conditions. In this paper, we propose contrast-independent partially explicitly scheme for wave equations. The splitting requires a careful design. We prove that the proposed splitting is unconditionally stable under some suitable conditions formulated for the second space (slow). This condition requires some type of non-contrast dependent space and is easier to satisfy in the ``slow'' space. We present numerical results and show that the proposed methods provide results similar to implicit methods with the time step that is independent of the contrast.

\section{Introduction}

Multiscale problems for wave equations have been of interest for many applications. These include wave equations in materials, underground, and so on. In many applications, material properties have multiscale nature and high contrast, expressed as the ratio between media properties. These typically include some features with very distinct properties in small portions of the domain, or small narrow strips (known as channels). These heterogeneities bring challenges to numerical simulations as one needs to resolve the scales and the contrast. Recently, many spatial multiscale methods have been introduced to resolve spatial heterogeneities. However, because of high contrast, one needs to take small time step in wave equations when using explicit methods. To avoid this difficulty, we introduce partially explicit methods.

Explicit methods are commonly used for wave equations due to the finite speed of propagation. For example, in seismology applications, staggered explicit methods (\cite{virieux1984sh,chung2015staggered}) are used, which conserve the energy. Explicit methods have advantages as they provide a fast marching in time and can easily preserve some physical quantities. On the other hand, explicit methods require a small time step which is due to the mesh size and the contrast. 
Implicit methods typically give unconditionally stable schemes, but with a higher computational cost.
It is therefore desirable to develop a practical compromise that takes advantage of both explicit and implicit methods.
In our applications, the problems are solved on a coarse grid, where the mesh size is chosen to be much larger compared to heterogeneities. The contrast can be much larger compared to the inverse of the mesh size. It is important to remove the contrast dependency in the time stepping. For this, we propose a novel splitting algorithm and analyze its stability. Because the problems are solved on a coarse grid that is much larger compared to heterogeneities, we use multiscale methods.

Multiscale methods are often used in applications to reduce the computational cost and solve the problem on a coarser grid. These approaches provide models on a coarse grid. Many multiscale methods have been developed and analyzed. Some of them formulate coarse-grid problems using effective media properties and based on homogenization, \cite{dur91,weh02,ab05,eh09}. Other approaches are based on constructing multiscale basis functions and formulating coarse-grid equations.
These include
multiscale
finite element methods \cite{eh09,hw97,jennylt03}, 
generalized multiscale finite element methods (GMsFEM) \cite{chung2016adaptiveJCP,MixedGMsFEM,WaveGMsFEM,chung2018fast,GMsFEM13}, 
constraint energy minimizing GMsFEM (CEM-GMsFEM) 
\cite{chung2018constraint, chung2018constraintmixed}, nonlocal
multi-continua (NLMC) approaches \cite{NLMC},
metric-based upscaling \cite{oz06_1}, heterogeneous multiscale method 
\cite{ee03}, localized orthogonal decomposition (LOD) 
\cite{henning2012localized}, equation-free approaches \cite{rk07,skr06}, 
and
multiscale stochastic approaches \cite{hou2017exploring, hou2019model, hou2018adaptive}.
For high-contrast problems, GMsFEM and NLMC are proposed to extract
macroscopic quantities associated with the degrees of the freedom that
the operator ``cant see''.
For this reason, for GMsFEM and related approaches
\cite{chung2018constraint}, multiple basis
functions or continua are constructed to capture the multiscale features
due to high contrast
\cite{chung2018constraintmixed, NLMC}. 
These approaches require a careful design of multiscale
dominant modes that represent high-contrast related features. 
The contrast, in addition,  introduces a stiffness
in the forward problems. When treating explicitly, one needs to take
very small time steps when the contrast is high. In this paper, we will propose an approach
that allows taking the time step to be independent of the contrast by
handling some degrees of freedom implicitly and some explicitly.

Our approaches are based on some fundamental splitting algorithms
\cite{marchuk1990splitting,VabishchevichAdditive},
which are initially designed to split various physics.
These algorithms are originally designed for multi-physics problems
to separate various physics and reduce the computational cost. 
In recent works, we have proposed approaches for temporal splitting
that uses multiscale spaces  \cite{efendiev2020splitting, efendiev_split2020}.
In  \cite{efendiev2020splitting}, a general framework is proposed where
the transition to simpler problems is carried out 
based on spatial decomposition of the solution.
In the scheme proposed in  \cite{efendiev2020splitting}, we split
both mass and stiffness matrix for parabolic equations.
In these approaches, we divide the spatial space into various components
and use these subspaces in the temporal splitting. As a result, smaller
systems are inverted in each time step, which reduces the computational
cost. 
These algorithms are implicit.
In this paper, we propose explicit-implicit algorithms using 
careful solution space decomposition.
Though our proposed approaches share some common concepts with
implicit-explicit approaches (e.g., \cite{ascher1997implicit}, \cite{li2008effectiveness,abdulle2012explicit,engquist2005heterogeneous,ariel2009multiscale}, 
they  differ from these approaches as
our goal is to use
splitting concepts and treat implicitly and explicitly certain (contrast-dependent and
contrast-independent)
parts of the solution in order to make the time step contrast 
independent.

Our approach extends our previous work on partial explicit methods for parabolic equations \cite{chung_partial_expliict21} to wave equations. This is a significant extension as we will discuss next. First, the use of explicit methods for wave equations is more popular as discussed earlier and the proposed methods can be used to remove the restrictions on the contrast for the time stepping. Secondly, as our analysis shows that one needs a careful splitting construction in order to guarantee the stability. 
As in our previous CEM-GMsFEM approaches, we select 
dominant basis functions which capture important degrees of freedom
and it is known to give contrast-independent convergence that scales with
the mesh size. We design and
introduce an additional space in the complement space
and these degrees of freedom are treated explicitly.
In typical situations, one has very few degrees of freedom
in dominant basis functions that are treated implicitly.
We propose two approaches for temporal splitting. Both approaches require a careful introduction of energy functionals that can guarantee the stability. We note that a special decomposition
is needed to remove the contrast in the time stepping, which is
shown in this paper. Proposed approaches are still coupled via mass matrix; however, we remove the coupling via stiffness matrix which include high contrast. The coupling via mass matrix can be resolved via iterative methods much easier as it does not contain the contrast.

We remark several observations. 
\begin{itemize}

\item Additional degrees of freedom (basis functions
beyond CEM-GMsFEM basis functions) is needed for dynamic problems,
in general, to handle
missing information.

\item  Our approaches
share some similarities with online methods (e.g., \cite{chung2015residual}), 
where additional
basis functions are added and iterations are performed. However, proposed approaches correct the solution in dynamical problems without iterations.

\item We note that restrictive time step scales as the coarse mesh
size (e.g., $dt=H$)  and thus much coarser.

\end{itemize}

We present several  numerical results. 
We consider different heterogeneous media and different source frequencies. 
Examples are selected where additional basis functions provide an improvement
by choosing ``singular'' source terms, which are common for wave equations.
We compare
various methods and show that the proposed methods provide an approximation that is
independent of the contrast.

The paper is organized as follows. In the next section, we present
Preliminaries. In Section 3, we present a general construction
of partially explicit methods. Section 4 is devoted to the construction
of multiscale spaces. We present numerical results in Section 5.
The conclusions are presented in Section 6.

\section{Preliminaries}

We will consider the second order wave equation in heterogeneous
domain. The problem consists of finding 
$u$ such that 
\begin{equation}
\label{eq:main1}
{\partial^2\over\partial t^2}u=\nabla\cdot(\kappa\nabla u)\;\text{in }\Omega,
\end{equation}
where $\kappa\in L^{\infty}(\Omega)$ is a high contrast heterogeneous field.
The equation (\ref{eq:main1}) is equipped with initial and boundary conditions,
$u(0,\cdot)=u_0(x)$, $u_t(0,\cdot)=u_{00}(x)$, and $u(t,x)=g(x)$ on 
$\partial \Omega$.

We can write the problem in the weak formulation. 
Find $u(t,\cdot)\in V:=H^{1}(\Omega)$
such that 
\begin{equation}
({\partial^2\over\partial t^2}u,v)=-a(u,v)\;\forall v\in V,
\label{eq:problem_weak}
\end{equation}
where 
\[
a(u,v)=\int_{\Omega}\kappa\nabla u\cdot\nabla v.
\]
We take homogeneous boundary conditions.

For semidiscretization in space, we seek an approximation
 $u_H(t) \in V_H$, where $V_H$ ($ V_H \subset V$) is a
finite dimensional space ($H$ is a spatial mesh size),
\begin{equation}\label{2.3}
 \frac{d^2}{d t^2} (u_H(t), v) + a(u_H,v) = 0 \quad \forall v \in V_H, 
 \quad 0 < t \leq T , 
\end{equation} 
\begin{equation}\label{2.4}
 u(0) = u_H^0 ,
 \quad \frac{d u}{d t} (0) = {u}_H^{00} ,  
\end{equation}
where $(u_H^0,v) = (u_0, v), \ (u_H^{00},v) = (u_{00}, v), \ \forall \in V_H$.  
We set $v = d u_H /dt$ in (\ref{2.3}) and obtain the equalities for conservation
and stability for  (\ref{2.3}), (\ref{2.4}) with respect to initial conditions:
\begin{equation}\label{2.5}
 E(t) = E(0) ,
 \quad 0 < t \leq T ,
\end{equation} 
where
\[
 E(t) = \left \| \frac{d u}{d t} (t) \right \|^2 + \|u(t)\|^2_a , 
\] 
where $ \|u\|^2_a=a(u,u)$ and  $ \|u\|^2=(u,u)$.

We take a uniform mesh with the size $\tau$ and let
$t^n = n \tau, \ n =  0, \ldots, N, \ N \tau = T$, $u_H^n = u(t^n)$.
Stability conditions for three-layer schemes with second-order accuracy with respect to
$\tau$
is well known (see, for example, \cite{SamarskiiTheory, SamarskiiMatusVabischevich2002}).

When using implicit discretization, $u_H^{n} \approx u_H(t^n)$ is sought as
\begin{equation}
\label{2.6}
\begin{split}
 \left ( \frac{u_H^{n+1}  - 2u_H^{n} + u_H^{n-1} }{\tau^2}, v \right ) + & \frac{1}{4} a(u_H^{n+1} + 2u_H^{n} + u_H^{n-1} ,v) = 0 
 \quad \forall v \in V_H, \\
 & \quad n = 1, \ldots, N-1 , 
\end{split}
\end{equation} 
with appropriate initial conditions.
We introduce new variables
\[
 s^n = \frac{u_H^{n} + u_H^{n-1}}{2} ,
 \quad  r^n = \frac{u_H^{n} - u_H^{n-1}}{\tau } ,
\] 
and from (\ref{2.6}), we get
\[
 \left ( \frac{r^{n+1} - r^n}{\tau}, v \right ) + \frac{1}{2} a(s^{n+1} + s^n, v) = 0 . 
\] 
We take
\[
 v = 2 (s^{n+1} - s^n) = \tau (r^{n+1} + r^n) ,
\] 
which gives
\[
 \|r^{n+1}\|^2 - \|r^{n}\|^2 + \|s^{n+1}\|_a^2 -  \|s^{n}\|_a^2 = 0 .
\]

Thus, we have
\begin{equation}\label{2.8}
 E^{n+1/2} = E^{n-1/2} ,
\end{equation} 
where
\[
 E^{n+1/2} =  \left \|\frac{u_H^{n+1} - u_H^{n}}{\tau } \right \|^2 + \left \|\frac{u_H^{n+1} + u_H^{n}}{2} \right \|_a^2 ,
 \quad n = 1, \ldots, N-1 .
\] 
This equality is a discrete version of  (\ref{2.5}) and guarantees 
unconditional stability. 

Stability condition for explicit method
\begin{equation}\label{2.9}
 \left ( \frac{u_H^{n+1}  - 2u_H^{n} + u_H^{n-1} }{\tau^2}, v \right ) + a(u_H^{n},v) = 0 
 \quad \forall v \in V_H, 
 \quad n = 1, \ldots, N-1 , 
\end{equation} 
is done in a similar way.
Taking into account
\[
 u_H^{n} = \frac{u_H^{n+1} + 2 u_H^n + u_H^{n-1}}{4} 
 - \frac{\tau^2}{4} \frac{u_H^{n+1} - 2 u_H^n + u_H^{n-1}}{\tau^2} ,
\] 
in new variables defined,  (\ref{2.9}) can be written as
\[
 \|r^{n+1}\|^2 - \frac{\tau^2}{4} \|r^{n+1}\|_a^2 - \|r^{n}\|^2  + \frac{\tau^2}{4}\|r^{n}\|_a^2  + \|s^{n+1}\|_a^2 -  \|s^{n}\|_a^2 = 0 .
\] 
Thus, we have the equality of the energies, where the energy is defined as
\[
 E^{n+1/2} =  \left \|\frac{u_H^{n+1} - u_H^{n}}{\tau } \right \|^2 -  \frac{\tau^2}{4} \left \|\frac{u_H^{n+1} - u_H^{n}}{\tau } \right \|_a^2 + \left \|\frac{u_H^{n+1} + u_H^{n}}{2} \right \|_a^2 ,
 \quad n = 1, \ldots, N-1 .
\] 
The quantity $E^{n+1/2}$ defines a norm if 
\begin{equation}\label{2.10}
 \|v\|^2 \geq \frac{\tau^2}{4} \|v\|_a^2 \quad \forall v \in V_H .
\end{equation} 
Because of  (\ref{2.10}), the stability of (\ref{2.9}) takes place if $\tau$ is 
sufficiently small
$\tau \leq \tau_0$.

The schemes 
(\ref{2.6}) and (\ref{2.9}) 
are special cases of three-layer scheme with weights:
\begin{equation}\label{2.11}
\begin{split}
 \left ( \frac{u_H^{n+1}  - 2u_H^{n} + u_H^{n-1} }{\tau^2}, v \right ) + & a(\sigma u_H^{n+1} + (1-2\sigma)u_H^{n} + \sigma u_H^{n-1} ,v) = 0 
 \quad \forall v \in V_H, \\
 & \quad n = 1, \ldots, N-1 .
\end{split}
\end{equation}
When $\sigma = 0.25$ we have (\ref{2.6}), and if $\sigma = 0$, we have the scheme
(\ref{2.9}).
Unconditionally stable is the scheme
(\ref{2.11}), if $\sigma \geq 0.25$,
and conditionally stable if 
$0 \leq  \sigma < 0.25$. 
We have considered limit cases $\sigma = 0.25$ and $\sigma = 0$.

\section{Partially Explicit Temporal Splitting Scheme}

\subsection{The methodology}

In this section, we will discuss a temporal splitting scheme 
We consider
$V_{H}$ can be decomposed into two subspaces $V_{H,1}$, $V_{H,2}$
namely, 
\[
V_{H}=V_{H,1} +  V_{H,2}.
\]


We seek the solution $u_H=u_{H,1}+ u_{H,2}$ and 
  $\{u_{H,1}^{n}\}_{n=1}^{N}\in V_{1,H},\;\{u_{H,2}^{n}\}_{n=1}^{N}\in V_{H,2}$,
$u_H=u_{H,1}+u_{H,2}$, such that
\begin{equation}
\begin{split}
\label{eq:split12}
(u_{H}^{n+1}-2u_{H}^{n}+u_{H}^{n-1},w) + 
\cfrac{\tau^{2}}{2}a(u_{H,1}^{n+1}+u_{H,1}^{n-1}+2u_{H,2}^{n},w)  =0\;\forall w\in V_{1,H}\\
(u_{H}^{n+1}-2u_{H}^{n}+u_{H}^{n-1},w)+ 
\tau^{2}a(\omega u_{H,1}^n +(1-\omega)(u_{H,1}^{n+1}+u_{H,1}^{n-1})/2+u_{H,2}^{n},w)  =0\;\forall w\in V_{2,H}.
\end{split}
\end{equation}
We will consider the case $\omega=1$ mostly as the case $\omega=0$ performs 
similarly and is more difficult to show stability (see Appendix \ref{sec:appendix}). The case $\omega=1$ has the following form
\begin{align}
(u_{H}^{n+1}-2u_{H}^{n}+u_{H}^{n-1},w)+\cfrac{\tau^{2}}{2}a(u_{H,1}^{n+1}+u_{H,1}^{n-1}+2u_{H,2}^{n},w) & =0\;\forall w\in V_{1,H},\label{eq:simplied_eq1}\\
(u_{H}^{n+1}-2u_{H}^{n}+u_{H}^{n-1},w)+\tau^{2}a(u_{H,1}^{n}+u_{H,2}^{n},w) & =0\;\forall w\in V_{2,H}.\label{eq:simplied_eq2}
\end{align}
This is a special case of three-layer schemes, which we will investigate for stability 
and show that one can use contrast independent time step.
We denote the discrete energy $E^{n+\frac{1}{2}}$ of $u_{H}$ is
defined as 
\begin{equation}
\begin{split}
E^{n+\frac{1}{2}}=\|u_{H}^{n+1}-u_{H}^{n}\|^{2}+\cfrac{\tau^{2}}{2}\sum_{i=1,2}\Big(\|u_{H,i}^{n+1}\|_{a}^{2}+\|u_{H,i}^{n}\|_{a}^{2}\Big)+\\
\tau^{2}a(u_{H,2}^{n+1},u_{H,1}^{n})+\tau^{2}a(u_{H,1}^{n+1},u_{H,2}^{n})-\cfrac{\tau^{2}}{2}\|u_{H,2}^{n+1}-u_{H,2}^{n}\|_{a}^{2}
\end{split}
\end{equation}
or
\[
E^{n+\frac{1}{2}}=\|u_{H}^{n+1}-u_{H}^{n}\|^{2}+\cfrac{\tau^{2}}{2}\Big(\|u_{H,1}^{n+1}+u_{H,2}^{n}\|_{a}^{2}+\|u_{H,1}^{n}+u_{H,2}^{n+1}\|_{a}^{2}\Big)-\cfrac{\tau^{2}}{2}\|u_{H,2}^{n+1}-u_{H,2}^{n}\|_{a}^{2}
\]
\begin{lemma}
For $u_{H,1}$ and $u_{H,2}$ satisfying (\ref{eq:simplied_eq1})
and (\ref{eq:simplied_eq2}), we have 
\[
E^{n+\frac{1}{2}}=E^{n-\frac{1}{2}}.
\]
\end{lemma}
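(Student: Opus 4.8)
The plan is to reproduce, at the level of the split system, the energy identity already established for the fully implicit scheme (\ref{2.6}) and the explicit scheme (\ref{2.9}): substitute into each of (\ref{eq:simplied_eq1})--(\ref{eq:simplied_eq2}) the discrete analogue of the test function $v=u^{n+1}-u^{n-1}$, restricted to the appropriate subspace, add the two resulting identities, and recognize the outcome as $E^{n+\frac12}-E^{n-\frac12}$. Concretely, I would take $w=u_{H,1}^{n+1}-u_{H,1}^{n-1}\in V_{1,H}$ in (\ref{eq:simplied_eq1}) and $w=u_{H,2}^{n+1}-u_{H,2}^{n-1}\in V_{2,H}$ in (\ref{eq:simplied_eq2}); both are admissible since $V_{1,H}$ and $V_{2,H}$ are linear subspaces. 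Adding the two equations and using $u_H=u_{H,1}+u_{H,2}$, the inertia terms collapse to $(u_{H}^{n+1}-2u_{H}^{n}+u_{H}^{n-1},\,u_{H}^{n+1}-u_{H}^{n-1})$, which equals $\|u_{H}^{n+1}-u_{H}^{n}\|^{2}-\|u_{H}^{n}-u_{H}^{n-1}\|^{2}$ by the elementary identity $(a-2b+c,\,a-c)=\|a-b\|^{2}-\|b-c\|^{2}$. This reproduces exactly the kinetic part of $E^{n+\frac12}-E^{n-\frac12}$.

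For the stiffness contributions I would expand every $a(\cdot,\cdot)$ by bilinearity and sort the result into ``diagonal'' and ``cross'' pieces. The diagonal piece from (\ref{eq:simplied_eq1}), namely $\tfrac{\tau^{2}}{2}a(u_{H,1}^{n+1}+u_{H,1}^{n-1},\,u_{H,1}^{n+1}-u_{H,1}^{n-1})$, telescopes at once to $\tfrac{\tau^{2}}{2}\bigl(\|u_{H,1}^{n+1}\|_{a}^{2}-\|u_{H,1}^{n-1}\|_{a}^{2}\bigr)$. The diagonal piece from (\ref{eq:simplied_eq2}) is $\tau^{2}a(u_{H,2}^{n},\,u_{H,2}^{n+1}-u_{H,2}^{n-1})$; here I would invoke the decomposition $u_{H,2}^{n}=\tfrac12(u_{H,2}^{n+1}+u_{H,2}^{n-1})-\tfrac12(u_{H,2}^{n+1}-2u_{H,2}^{n}+u_{H,2}^{n-1})$, which turns this term into $\tfrac{\tau^{2}}{2}\bigl(\|u_{H,2}^{n+1}\|_{a}^{2}-\|u_{H,2}^{n-1}\|_{a}^{2}\bigr)-\tfrac{\tau^{2}}{2}\bigl(\|u_{H,2}^{n+1}-u_{H,2}^{n}\|_{a}^{2}-\|u_{H,2}^{n}-u_{H,2}^{n-1}\|_{a}^{2}\bigr)$, producing both the diagonal $u_{H,2}$ part and the $-\tfrac{\tau^{2}}{2}\|u_{H,2}^{n+1}-u_{H,2}^{n}\|_{a}^{2}$ correction in the energy. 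Finally, the cross terms $\tau^{2}a(u_{H,2}^{n},\,u_{H,1}^{n+1}-u_{H,1}^{n-1})$ from (\ref{eq:simplied_eq1}) and $\tau^{2}a(u_{H,1}^{n},\,u_{H,2}^{n+1}-u_{H,2}^{n-1})$ from (\ref{eq:simplied_eq2}) add, using symmetry of $a$, to $\bigl[\tau^{2}a(u_{H,2}^{n+1},u_{H,1}^{n})+\tau^{2}a(u_{H,1}^{n+1},u_{H,2}^{n})\bigr]-\bigl[\tau^{2}a(u_{H,2}^{n},u_{H,1}^{n-1})+\tau^{2}a(u_{H,1}^{n},u_{H,2}^{n-1})\bigr]$, i.e. the difference of the cross terms of $E$ at levels $n+\tfrac12$ and $n-\tfrac12$. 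Collecting all pieces gives $E^{n+\frac12}-E^{n-\frac12}=0$, and the alternative completed-square form of the energy stated before the lemma follows by regrouping.

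The main obstacle is purely organizational: one must pick exactly the right test functions and then check that the asymmetric weighting of the two equations --- the $\tfrac{\tau^{2}}{2}$ multiplying $u_{H,1}^{n+1}+u_{H,1}^{n-1}$ in the implicit line (\ref{eq:simplied_eq1}) versus the $\tau^{2}$ multiplying $u_{H,1}^{n}+u_{H,2}^{n}$ in the explicit line (\ref{eq:simplied_eq2}) --- is precisely what makes the $u_{H,1}$ diagonal terms telescope \emph{without} a defect term, the $u_{H,2}$ diagonal terms telescope \emph{with} the defect $-\tfrac{\tau^{2}}{2}\|u_{H,2}^{n+1}-u_{H,2}^{n}\|_{a}^{2}$, and the two separate cross terms merge into the single symmetrized cross term present in $E^{n+\frac12}$. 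Tracking the factors of $2$ and the signs through this bookkeeping is the only delicate point; no inequality or estimate is needed for the identity itself, only bilinearity and symmetry of $a(\cdot,\cdot)$ together with the two algebraic splitting identities above.
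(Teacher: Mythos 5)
Your proposal is correct and follows essentially the same route as the paper's proof: the same test functions $w=u_{H,1}^{n+1}-u_{H,1}^{n-1}$ and $w=u_{H,2}^{n+1}-u_{H,2}^{n-1}$, the same collapse of the mass terms, and the same sorting of the stiffness terms into telescoping diagonal and symmetrized cross contributions. The only cosmetic difference is that you convert $\tau^{2}a(u_{H,2}^{n},u_{H,2}^{n+1}-u_{H,2}^{n-1})$ via the midpoint decomposition of $u_{H,2}^{n}$, whereas the paper uses the polarization identity for $a(u_{H,2}^{n+1},u_{H,2}^{n})$; both yield the identical defect term $-\tfrac{\tau^{2}}{2}\|u_{H,2}^{n+1}-u_{H,2}^{n}\|_{a}^{2}$.
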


\begin{proof}
We have
\begin{align}
(u_{H}^{n+1}-2u_{H}^{n}+u_{H}^{n-1},w)+\cfrac{\tau^{2}}{2}a(u_{H,1}^{n+1}+u_{H,1}^{n-1}+2u_{H,2}^{n},w) & =0\;\forall w\in V_{1,H},\label{eq:simplied_eq1-1}\\
(u_{H}^{n+1}-2u_{H}^{n}+u_{H}^{n-1},w)+\tau^{2}a(u_{H,1}^{n}+u_{H,2}^{n},w) & =0\;\forall w\in V_{2,H}.\label{eq:simplied_eq2-1}
\end{align}
We consider $w=u_{H,1}^{n+1}-u_{H,1}^{n-1}$ in the first equation
and $w=u_{H,2}^{n+1}-u_{H,2}^{n-1}$ in the second equation
and obtain the following equations
\begin{equation}
\begin{split}
(u_{H}^{n+1}-2u_{H}^{n}+u_{H}^{n-1},u_{H,1}^{n+1}-u_{H,1}^{n-1})+\cfrac{\tau^{2}}{2}a(u_{H,1}^{n+1}+u_{H,1}^{n-1}+2u_{H,2}^{n},u_{H,1}^{n+1}-u_{H,1}^{n-1}) =0\\
(u_{H}^{n+1}-2u_{H}^{n}+u_{H}^{n-1},u_{H,2}^{n+1}-u_{H,2}^{n-1})+\tau^{2}a(u_{H,1}^{n}+u_{H,2}^{n},u_{H,2}^{n+1}-u_{H,2}^{n-1})  =0
\end{split}
\end{equation}
The sum of the left hand sides  can be estimated in the following
way 
\begin{align*}
\sum_{i=1,2}(u_{H}^{n+1}-2u_{H}^{n}+u_{H}^{n-1},u_{H,i}^{n+1}-u_{H,i}^{n-1}) & =(u_{H}^{n+1}-2u_{H}^{n}+u_{H}^{n-1},u_{H}^{n+1}-u_{H}^{n-1}) = \\
 & \|u_{H}^{n+1}-u_{H}^{n}\|_{}^{2}-\|u_{H}^{n}-u_{H}^{n-1}\|_{}^{2}.
\end{align*}
Next, we will estimate the right hand side of the equations. We have
\begin{equation}
\begin{split}
\cfrac{1}{2}a(u_{H,1}^{n+1}+u_{H,1}^{n-1}+2u_{H,2}^{n},u_{H,1}^{n+1}-u_{H,1}^{n-1})=\cfrac{1}{2}a(u_{H,1}^{n+1}+u_{H,1}^{n-1},u_{H,1}^{n+1}-u_{H,1}^{n-1})+\\
a(u_{H,2}^{n},u_{H,1}^{n+1}-u_{H,1}^{n-1})
\end{split}
\end{equation}
and
\[
a(u_{H,1}^{n}+u_{H,2}^{n},u_{H,2}^{n+1}-u_{H,2}^{n-1})=a(u_{H,1}^{n},u_{H,2}^{n+1}-u_{H,2}^{n-1})+a(u_{H,2}^{n},u_{H,2}^{n+1}-u_{H,2}^{n-1})
\]
Thus, we have 
\begin{align*}
 & \cfrac{\tau^{2}}{2}a(u_{H,1}^{n+1}+u_{H,1}^{n-1}+2u_{H,2}^{n},u_{H,1}^{n+1}-u_{H,1}^{n-1})+\tau^{2}a(u_{H,1}^{n}+u_{H,2}^{n},u_{H,2}^{n+1}-u_{H,2}^{n-1}) = \\
 & \cfrac{\tau^{2}}{2}B_{1}+\tau^{2}B_{2}+\tau^{2}B_{3}
\end{align*}
where 
\begin{align*}
B_{1} & =a(u_{H,1}^{n+1}+u_{H,1}^{n-1},u_{H,1}^{n+1}-u_{H,1}^{n-1}),\\
B_{2} & =a(u_{H,2}^{n},u_{H,1}^{n+1}-u_{H,1}^{n-1})+a(u_{H,1}^{n},u_{H,2}^{n+1}-u_{H,2}^{n-1}),\\
B_{3} & =a(u_{H,2}^{n},u_{H,2}^{n+1}-u_{H,2}^{n-1}).
\end{align*}
To estimate $B_{1}$, we have 
\begin{align*}
B_{1} & =a(u_{H,1}^{n+1}+u_{H,1}^{n-1},u_{H,1}^{n+1}-u_{H,1}^{n-1}) = \\
 & \|u_{H,1}^{n+1}\|_{a}^{2}-\|u_{H,1}^{n-1}\|_{a}^{2} = \\
 & (\|u_{H,1}^{n+1}\|_{a}^{2}+\|u_{H,1}^{n}\|_{a}^{2})-(\|u_{H,1}^{n}\|_{a}^{2}+\|u_{H,1}^{n-1}\|_{a}^{2}).
\end{align*}
We next estimate $B_{2}$ and have
\begin{align*}
B_{2} & =a(u_{H,2}^{n},u_{H,1}^{n+1}-u_{H,1}^{n-1})+a(u_{H,1}^{n},u_{H,2}^{n+1}-u_{H,2}^{n-1}) = \\
 & a(u_{H,2}^{n},u_{H,1}^{n+1})+a(u_{H,1}^{n},u_{H,2}^{n+1})-a(u_{H,2}^{n},u_{H,1}^{n-1})-a(u_{H,1}^{n},u_{H,2}^{n-1}).
\end{align*}
Since $a(\cdot,\cdot)$ is symmetric, we have 
\begin{align*}
B_{2} & =\Big(a(u_{H,1}^{n+1},u_{H,2}^{n})+a(u_{H,2}^{n+1},u_{H,1}^{n})\Big)-\Big(a(u_{H,1}^{n},u_{H,2}^{n-1})+a(u_{H,2}^{n},u_{H,1}^{n-1})\Big).
\end{align*}
To estimate $B_{3}$, we have
\begin{align*}
B_{3} & =a(u_{H,2}^{n},u_{H,2}^{n+1}-u_{H,2}^{n-1})= \\
 & a(u_{H,2}^{n},u_{H,2}^{n+1})-a(u_{H,2}^{n},u_{H,2}^{n-1}) = \\
 & a(u_{H,2}^{n+1},u_{H,2}^{n})-a(u_{H,2}^{n},u_{H,2}^{n-1}).
\end{align*}
We also have
\[
a(u_{H,2}^{n+1},u_{H,2}^{n})=\cfrac{1}{2}\Big(\|u_{H,2}^{n+1}\|_{a}^{2}+\|u_{H,2}^{n}\|_{a}^{2}-\|u_{H,2}^{n+1}-u_{H,2}^{n}\|_{a}^{2}\Big)
\]
and 
\[
a(u_{H,2}^{n},u_{H,2}^{n-1})=\cfrac{1}{2}\Big(\|u_{H,2}^{n}\|_{a}^{2}+\|u_{H,2}^{n-1}\|_{a}^{2}-\|u_{H,2}^{n}-u_{H,2}^{n-1}\|_{a}^{2}\Big).
\]
Thus, we have 
\begin{equation}
\begin{split}
  \cfrac{\tau^{2}}{2}a(u_{H,1}^{n+1}+u_{H,1}^{n-1}+2u_{H,2}^{n},u_{H,1}^{n+1}-u_{H,1}^{n-1})+\tau^{2}a(u_{H,1}^{n}+u_{H,2}^{n},u_{H,2}^{n+1}-u_{H,2}^{n-1}) = \\
  \cfrac{\tau^{2}}{2}\Big(\|u_{H,1}^{n+1}\|_{a}^{2}+\|u_{H,1}^{n}\|_{a}^{2}\Big)-\cfrac{\tau^{2}}{2}\Big(\|u_{H,1}^{n}\|_{a}^{2}+\|u_{H,1}^{n-1}\|_{a}^{2}\Big) + \\
  \tau^{2}\Big(a(u_{H,1}^{n+1},u_{H,2}^{n})+a(u_{H,2}^{n+1},u_{H,1}^{n})\Big)-
\tau^{2}\Big(a(u_{H,1}^{n},u_{H,2}^{n-1})+a(u_{H,2}^{n},u_{H,1}^{n-1})\Big)  + \\
  \cfrac{\tau^{2}}{2}\Big(\|u_{H,1}^{n+1}\|_{a}^{2}+\|u_{H,1}^{n}\|_{a}^{2}-\|u_{H,2}^{n+1}-u_{H,2}^{n}\|_{a}^{2}\Big)-\\
\cfrac{\tau^{2}}{2}\Big(\|u_{H,1}^{n}\|_{a}^{2}+\|u_{H,1}^{n-1}\|_{a}^{2}-\|u_{H,2}^{n}-u_{H,2}^{n-1}\|_{a}^{2}\Big)
\end{split}
\end{equation}
By definition of $E^{n+\frac{1}{2}}$, we have 
\begin{equation}
\begin{split}
  E^{n+\frac{1}{2}} = \\
 \|u_{H}^{n+1}-u_{H}^{n}\|_{}^{2}+\sum_{i=1,2}\Big(\cfrac{\tau^{2}}{2}(\|u_{H,i}^{n+1}\|_{a}^{2}+\|u_{H,i}^{n}\|_{a}^{2})\Big)+\\
\tau^{2}a(u_{H,2}^{n+1},u_{H,1}^{n})+a(u_{H,1}^{n+1},u_{H,2}^{n})-\cfrac{\tau^{2}}{2}\|u_{H,2}^{n+1}-u_{H,2}^{n}\|_{a}^{2} = \\
  \|u_{H}^{n}-u_{H}^{n-1}\|_{L^{2}}^{2}+\sum_{i=1,2}\Big(\cfrac{\tau^{2}}{2}(\|u_{H,i}^{n}\|_{a}^{2}+\|u_{H,i}^{n-1}\|_{a}^{2})\Big)+\\
\tau^{2}a(u_{H,2}^{n},u_{H,1}^{n-1})+a(u_{H,1}^{n},u_{H,2}^{n-1})-\cfrac{\tau^{2}}{2}\|u_{H,2}^{n}-u_{H,2}^{n-1}\|_{a}^{2} = \\
 E^{n-\frac{1}{2}}.
\end{split}
\end{equation}
\end{proof}

We will discuss two cases. In the first case, we assume $V_{H,1}$ and $V_{H,2}$ are
orthogonal and in the second case, we will consider the case when they are
not orthogonal.

\subsection{Case $V_{H,1}$ and $V_{H,2}$ are orthogonal}

\begin{theorem}\label{t-1}
The partially explicit scheme (\ref{eq:simplied_eq1}) and (\ref{eq:simplied_eq2})
is stable if 
\begin{equation}\label{3.4}
 \|v_2\|^2 \geq \frac{\tau^2}{2} \|v_2\|_a^2  \quad \forall v_2 \in V_{2,H} .
\end{equation}
\end{theorem}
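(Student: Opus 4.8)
The plan is to leverage the energy identity from the Lemma, $E^{n+1/2} = E^{n-1/2}$, and show that under the stated condition \eqref{3.4}, the functional $E^{n+1/2}$ is in fact a nonnegative quantity that controls $\|u_H^{n+1}-u_H^n\|^2$ together with the $a$-norms of the components, so that boundedness of $E^{n+1/2}$ gives stability. The first step is to take the second representation of the energy,
\[
E^{n+\frac{1}{2}}=\|u_{H}^{n+1}-u_{H}^{n}\|^{2}+\cfrac{\tau^{2}}{2}\Big(\|u_{H,1}^{n+1}+u_{H,2}^{n}\|_{a}^{2}+\|u_{H,1}^{n}+u_{H,2}^{n+1}\|_{a}^{2}\Big)-\cfrac{\tau^{2}}{2}\|u_{H,2}^{n+1}-u_{H,2}^{n}\|_{a}^{2},
\]
and observe that the only possibly-negative term is $-\frac{\tau^2}{2}\|u_{H,2}^{n+1}-u_{H,2}^{n}\|_a^2$. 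The goal is to absorb this term into the positive contributions.

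The key idea is that orthogonality of $V_{H,1}$ and $V_{H,2}$ (in the $L^2$ inner product) lets me split $\|u_H^{n+1}-u_H^n\|^2 = \|u_{H,1}^{n+1}-u_{H,1}^n\|^2 + \|u_{H,2}^{n+1}-u_{H,2}^n\|^2$. Then the piece $\|u_{H,2}^{n+1}-u_{H,2}^n\|^2$ can be paired against $-\frac{\tau^2}{2}\|u_{H,2}^{n+1}-u_{H,2}^n\|_a^2$, and by hypothesis \eqref{3.4} applied to $v_2 = u_{H,2}^{n+1}-u_{H,2}^n \in V_{2,H}$ we get $\|u_{H,2}^{n+1}-u_{H,2}^n\|^2 - \frac{\tau^2}{2}\|u_{H,2}^{n+1}-u_{H,2}^n\|_a^2 \geq 0$. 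So I would rewrite
\[
E^{n+\frac{1}{2}}=\|u_{H,1}^{n+1}-u_{H,1}^{n}\|^{2}+\Big(\|u_{H,2}^{n+1}-u_{H,2}^{n}\|^{2}-\cfrac{\tau^{2}}{2}\|u_{H,2}^{n+1}-u_{H,2}^{n}\|_{a}^{2}\Big)+\cfrac{\tau^{2}}{2}\Big(\|u_{H,1}^{n+1}+u_{H,2}^{n}\|_{a}^{2}+\|u_{H,1}^{n}+u_{H,2}^{n+1}\|_{a}^{2}\Big),
\]
exhibiting $E^{n+1/2}$ as a sum of three nonnegative terms, hence a genuine (semi-)norm on the relevant product space. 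Combined with the Lemma this yields $E^{n+1/2} = E^{1/2}$ for all $n$, which bounds $\|u_{H,1}^{n+1}-u_{H,1}^n\|$, $\|u_{H,1}^{n+1}+u_{H,2}^n\|_a$, and $\|u_{H,1}^n+u_{H,2}^{n+1}\|_a$ uniformly in $n$, i.e. stability with respect to the initial data.

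The main obstacle I anticipate is not the algebra but making precise what ``stable'' means here and checking that the three surviving nonnegative terms actually control a norm strong enough to be meaningful — in particular one should argue that control of the two cross-combinations $\|u_{H,1}^{n+1}+u_{H,2}^{n}\|_{a}$ and $\|u_{H,1}^{n}+u_{H,2}^{n+1}\|_{a}$, together with the $L^2$ bound on the increments, suffices (e.g. to recover $\|u_{H,i}^n\|_a$-type control, possibly up to a manageable constant, again using orthogonality and the condition \eqref{3.4}). A secondary subtlety is that the hypothesis \eqref{3.4} must be applied to the difference $u_{H,2}^{n+1}-u_{H,2}^{n}$, which indeed lies in $V_{2,H}$, so no extra care is needed there; the only real work is the bookkeeping that turns the energy equality into an a priori estimate on the discrete solution. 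I would close by remarking that since \eqref{3.4} only constrains the ``slow'' space $V_{2,H}$, whose $a$-norm is contrast-independent by the design of the multiscale decomposition, the resulting time-step restriction $\tau \leq \tau_0$ is independent of the contrast.
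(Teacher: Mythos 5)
Your proposal is correct and follows essentially the same route as the paper: split $\|u_H^{n+1}-u_H^n\|^2$ using the $L^2$-orthogonality of $V_{H,1}$ and $V_{H,2}$, absorb the negative term $-\tfrac{\tau^2}{2}\|u_{H,2}^{n+1}-u_{H,2}^{n}\|_a^2$ via condition (\ref{3.4}) applied to $u_{H,2}^{n+1}-u_{H,2}^{n}\in V_{2,H}$, and conclude that the remaining expression is a (semi-)norm conserved by the Lemma. Your use of the second (sum-of-squares) form of $E^{n+1/2}$ just makes explicit the nonnegativity that the paper asserts when it writes the bound with the cross terms $\tau^2 a(u_{H,2}^{n+1},u_{H,1}^{n})+\tau^2 a(u_{H,1}^{n+1},u_{H,2}^{n})$ and states that it defines a norm.
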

\begin{proof}
In this case, we can show that
\begin{equation}
\begin{split}
  E^{n+\frac{1}{2}} = 
 \|u_{H}^{n+1}-u_{H}^{n}\|_{}^{2}+\sum_{i=1,2}\Big(\cfrac{\tau^{2}}{2}(\|u_{H,i}^{n+1}\|_{a}^{2}+\|u_{H,i}^{n}\|_{a}^{2})\Big)+\\
\tau^{2}a(u_{H,2}^{n+1},u_{H,1}^{n})+a(u_{H,1}^{n+1},u_{H,2}^{n})-\cfrac{\tau^{2}}{2}\|u_{H,2}^{n+1}-u_{H,2}^{n}\|_{a}^{2} \geq 
\|u_{H,1}^{n+1}-u_{H,1}^{n}\|_{L^{2}}^{2}+\\
\sum_{i=1,2}\Big(\cfrac{\tau^{2}}{2}(\|u_{H,i}^{n+1}\|_{a}^{2}+\|u_{H,i}^{n}\|_{a}^{2})\Big)+
\tau^{2}a(u_{H,2}^{n+1},u_{H,1}^{n})+a(u_{H,1}^{n+1},u_{H,2}^{n})
\end{split}
\end{equation}
this defines a norm. In this norm, we have the stability.
\end{proof}

We also refer to Appendix \ref{sec:appendix}, where we present the proof
for the case $\omega=0$ when the spaces $V_{H,1}$ and $V_{H,2}$ are orthogonal.

\subsection{Case $V_{H,1}$ and $V_{H,2}$ are non-orthogonal}  

We define $\gamma,\gamma_{a}<1$ and $\alpha\in\mathbb{R}$ as
\[
\gamma:=\sup_{v_{1}\in V_{H,1},v_{2}\in V_{H,2}}\cfrac{(v_{1},v_{2})}{\|v_{1}\|_{}\|v_{2}\|_{}},\;\gamma_{a}:=\sup_{v_{1}\in V_{H,1},v_{2}\in V_{H,2}}\cfrac{a(v_{1},v_{2})}{\|v_{1}\|_{a}\|v_{2}\|_{a}}
\]
and 
\[
\alpha=\sup_{v_{2}\in V_{H,2}}\cfrac{\|v_{2}\|_{a}}{\|v_{2}\|_{}}.
\]

\begin{lemma}
If 
\begin{equation}
\label{eq:stab_cond}
2(1-\gamma)\alpha^{-2}\geq\tau^{2},
\end{equation}
 we have 
\[
\left(1-\gamma^{2}-\cfrac{\alpha^{2}\tau^{2}}{2}\right)\|u_{H,2}^{n+1}-u_{H,2}^{n}\|_{}^{2}+\cfrac{\tau^{2}(1-\gamma_{a})}{2}\sum_{i=1,2}\Big(\|u_{H,i}^{n+1}\|_{a}^{2}+\|u_{H,i}^{n}\|_{a}^{2}\Big)\leq E^{\frac{1}{2}}.
\]
\end{lemma}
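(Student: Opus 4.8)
The plan is to start from the conservation identity $E^{n+1/2}=E^{1/2}$ supplied by the preceding Lemma, and then to bound $E^{n+1/2}$ from below by the nonnegative quantity on the left. Writing $E^{n+1/2}$ in the second ``completed-square'' form, namely
\[
E^{n+\frac12}=\|u_H^{n+1}-u_H^n\|^2+\frac{\tau^2}{2}\Big(\|u_{H,1}^{n+1}+u_{H,2}^{n}\|_a^2+\|u_{H,1}^{n}+u_{H,2}^{n+1}\|_a^2\Big)-\frac{\tau^2}{2}\|u_{H,2}^{n+1}-u_{H,2}^n\|_a^2,
\]
the obstruction to positivity is only the last, negative term. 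The idea is to absorb $-\tfrac{\tau^2}{2}\|u_{H,2}^{n+1}-u_{H,2}^n\|_a^2$ partly into the $L^2$ term $\|u_H^{n+1}-u_H^n\|^2$ and partly into the $a$-norm terms.

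First I would lower-bound $\|u_H^{n+1}-u_H^n\|^2=\|(u_{H,1}^{n+1}-u_{H,1}^n)+(u_{H,2}^{n+1}-u_{H,2}^n)\|^2$ using the definition of $\gamma$: since $(v_1,v_2)\ge -\gamma\|v_1\|\|v_2\|$, we get $\|v_1+v_2\|^2\ge (1-\gamma)(\|v_1\|^2+\|v_2\|^2)\ge (1-\gamma^2)\|v_2\|^2$, hence $\|u_H^{n+1}-u_H^n\|^2\ge (1-\gamma^2)\|u_{H,2}^{n+1}-u_{H,2}^n\|^2$. Next, on the negative term use the definition of $\alpha$ to pass from the $a$-norm to the $L^2$-norm: $\|u_{H,2}^{n+1}-u_{H,2}^n\|_a^2\le \alpha^2\|u_{H,2}^{n+1}-u_{H,2}^n\|^2$. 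Combining these two estimates yields
\[
\|u_H^{n+1}-u_H^n\|^2-\frac{\tau^2}{2}\|u_{H,2}^{n+1}-u_{H,2}^n\|_a^2\ \ge\ \Big(1-\gamma^2-\frac{\alpha^2\tau^2}{2}\Big)\|u_{H,2}^{n+1}-u_{H,2}^n\|^2,
\]
which is nonnegative precisely under the hypothesis $2(1-\gamma^2)\alpha^{-2}\ge\tau^2$; I would note that $2(1-\gamma)\alpha^{-2}\ge\tau^2$ (as written, using $1-\gamma\le 1-\gamma^2$) is a slightly weaker-looking but in fact implies via $1-\gamma\leq 1-\gamma^2$... here one should be careful, and I would actually use the refined lower bound $\|v_1+v_2\|^2\ge(1-\gamma)(\|v_1\|^2+\|v_2\|^2)$ directly so that the coefficient matches the stated condition $2(1-\gamma)\alpha^{-2}\ge\tau^2$ together with dropping the nonnegative $\|u_{H,1}^{n+1}-u_{H,1}^n\|^2$ piece.

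Then I would handle the two $a$-norm cross terms $\|u_{H,1}^{n+1}+u_{H,2}^{n}\|_a^2$ and $\|u_{H,1}^{n}+u_{H,2}^{n+1}\|_a^2$: expanding each and using $a(v_1,v_2)\ge -\gamma_a\|v_1\|_a\|v_2\|_a$ gives $\|v_1+v_2\|_a^2\ge(1-\gamma_a)(\|v_1\|_a^2+\|v_2\|_a^2)$, so that
\[
\frac{\tau^2}{2}\Big(\|u_{H,1}^{n+1}+u_{H,2}^{n}\|_a^2+\|u_{H,1}^{n}+u_{H,2}^{n+1}\|_a^2\Big)\ \ge\ \frac{\tau^2(1-\gamma_a)}{2}\sum_{i=1,2}\Big(\|u_{H,i}^{n+1}\|_a^2+\|u_{H,i}^{n}\|_a^2\Big).
\]
Adding the two displayed lower bounds gives a lower bound for $E^{n+1/2}$, and invoking $E^{n+1/2}=E^{n-1/2}=\cdots=E^{1/2}$ from the Lemma finishes the proof. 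The only real subtlety — the ``hard part'' — is bookkeeping the constant in the first step so that it genuinely matches the stated condition (\ref{eq:stab_cond}): one must decide whether to keep $(1-\gamma)$ or $(1-\gamma^2)$ and whether the leftover $\|u_{H,1}^{n+1}-u_{H,1}^n\|^2$ is discarded; everything else is routine expansion of squares and two applications of the Cauchy–Schwarz-type bounds defining $\gamma$, $\gamma_a$, and $\alpha$.
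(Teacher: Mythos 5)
Your overall strategy matches the paper's: invoke the conservation identity $E^{n+1/2}=E^{1/2}$ from the preceding lemma, lower-bound the $a$-norm part using $\gamma_a$ (your estimate $\|v_1+v_2\|_a^2\ge(1-\gamma_a)(\|v_1\|_a^2+\|v_2\|_a^2)$, applied to the two cross pairs, is exactly the paper's bound), and absorb the negative term $-\frac{\tau^2}{2}\|u_{H,2}^{n+1}-u_{H,2}^n\|_a^2$ into the $L^2$ difference via $\alpha$. The genuine gap is precisely in the coefficient bookkeeping you flag as the ``hard part''. The chain $\|v_1+v_2\|^2\ge(1-\gamma)(\|v_1\|^2+\|v_2\|^2)\ge(1-\gamma^2)\|v_2\|^2$ is false in general: the second inequality would require $\|v_1\|^2\ge\gamma\|v_2\|^2$, which you do not have. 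And the resolution you settle on --- keep the factor $(1-\gamma)$ and drop the nonnegative $\|u_{H,1}^{n+1}-u_{H,1}^n\|^2$ piece --- only yields $\|u_H^{n+1}-u_H^n\|^2\ge(1-\gamma)\|u_{H,2}^{n+1}-u_{H,2}^n\|^2$, hence a final coefficient $1-\gamma-\frac{\alpha^2\tau^2}{2}$. Since $1-\gamma\le 1-\gamma^2$, this is a strictly weaker inequality than the one stated in the lemma, so as finalized your argument does not prove the claim with the coefficient $1-\gamma^2-\frac{\alpha^2\tau^2}{2}$.

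The missing ingredient is the asymmetric (weighted) Young inequality, which is what the paper uses: from $2(u_{H,1}^{n+1}-u_{H,1}^n,u_{H,2}^{n+1}-u_{H,2}^n)\ge-2\gamma\|u_{H,1}^{n+1}-u_{H,1}^n\|\,\|u_{H,2}^{n+1}-u_{H,2}^n\|$ and $2\gamma ab\le a^2+\gamma^2 b^2$ one gets directly $\|u_H^{n+1}-u_H^n\|^2\ge(1-\gamma^2)\|u_{H,2}^{n+1}-u_{H,2}^n\|^2$, and then $\|v_2\|_a^2\le\alpha^2\|v_2\|^2$ produces the stated coefficient $1-\gamma^2-\frac{\alpha^2\tau^2}{2}$. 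Note also that the hypothesis $2(1-\gamma)\alpha^{-2}\ge\tau^2$ is not needed to derive the inequality itself; it only guarantees (via $1-\gamma\le 1-\gamma^2$) that this coefficient is nonnegative, which is the point of the stability condition. With this one-line correction your proof coincides with the paper's; the conservation step and the $\gamma_a$ estimate are fine as written.
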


\begin{proof}
Since we have 
\[
E^{\frac{1}{2}}=E^{n+\frac{1}{2}}\;\text{for any }n\geq0,
\]
we have 
\begin{equation}
\begin{split}
\|u_{H}^{n+1}-u_{H}^{n}\|_{}^{2}+\sum_{i=1,2}\Big(\cfrac{\tau^{2}}{2}(\|u_{H,i}^{n+1}\|_{a}^{2}+\|u_{H,i}^{n}\|_{a}^{2})\Big)+\tau^{2}a(u_{H,2}^{n+1},u_{H,1}^{n})+\\
a(u_{H,1}^{n+1},u_{H,2}^{n})-\cfrac{\tau^{2}}{2}\|u_{H,2}^{n+1}-u_{H,2}^{n}\|_{a}^{2}=E^{\frac{1}{2}}.
\end{split}
\end{equation}
We have 
\begin{align*}
\|u_{H}^{n+1}-u_{H}^{n}\|_{}^{2} & =\sum_{i=1,2}\|u_{H,i}^{n+1}-u_{H,i}^{n}\|_{}^{2}+2(u_{H,1}^{n+1}-u_{H,1}^{n},u_{H,2}^{n+1}-u_{H,2}^{n}) \geq \\
 & \sum_{i=1,2}\|u_{H,i}^{n+1}-u_{H,i}^{n}\|_{}^{2}-2\gamma\|u_{H,1}^{n+1}-u_{H,1}^{n}\|_{}\|u_{H,2}^{n+1}-u_{H,2}^{n}\|_{} \geq\\
 & (1-\gamma^{2})\|u_{H,2}^{n+1}-u_{H,2}^{n}\|_{}^{2}.
\end{align*}
If $2(1-\gamma^{2})\alpha^{-2}\geq\tau^{2}$, we have 
\[
(1-\gamma^{2})\|u_{H,2}^{n+1}-u_{H,2}^{n}\|_{}^{2}-\cfrac{\tau^{2}}{2}\|u_{H,2}^{n+1}-u_{H,2}^{n}\|_{a}^{2}\geq(1-\gamma^{2}-\cfrac{\alpha^{2}\tau^{2}}{2})\|u_{H,2}^{n+1}-u_{H,2}^{n}\|_{}^{2}.
\]
We also obtain that
\begin{align*}
 & \cfrac{\tau^{2}}{2}\sum_{i=1,2}\Big(\|u_{H,i}^{n+1}\|_{a}^{2}+\|u_{H,i}^{n}\|_{a}^{2}\Big)+\tau^{2}a(u_{H,2}^{n+1},u_{H,1}^{n})+a(u_{H,1}^{n+1},u_{H,2}^{n}) \geq\\
 & \cfrac{\tau^{2}}{2}\sum_{i=1,2}\Big(\|u_{H,i}^{n+1}\|_{a}^{2}+\|u_{H,i}^{n}\|_{a}^{2}\Big)-\gamma_{a}\tau^{2}\Big(\|u_{H,2}^{n+1}\|_{a}\|u_{H,1}^{n}\|_{a}+\|u_{H,1}^{n+1}\|_{a}\|u_{H,2}^{n}\|_{a}\Big)
\end{align*}
and 
\[
\gamma_{a}\tau^{2}\Big(\|u_{H,2}^{n+1}\|_{a}\|u_{H,1}^{n}\|_{a}+\|u_{H,1}^{n+1}\|_{a}\|u_{H,2}^{n}\|_{a}\Big)\leq\cfrac{\gamma_{a}\tau^{2}}{2}\sum_{i=1,2}\Big(\|u_{H,i}^{n+1}\|_{a}^{2}+\|u_{H,i}^{n}\|_{a}^{2}\Big).
\]
Therefore, we have 
\begin{equation}
\begin{split}
\cfrac{\tau^{2}}{2}\sum_{i=1,2}\Big(\|u_{H,i}^{n+1}\|_{a}^{2}+\|u_{H,i}^{n}\|_{a}^{2}\Big)+\tau^{2}a(u_{H,2}^{n+1},u_{H,1}^{n})+a(u_{H,1}^{n+1},u_{H,2}^{n})\geq \\
\cfrac{\tau^{2}(1-\gamma_{a})}{2}\sum_{i=1,2}\Big(\|u_{H,i}^{n+1}\|_{a}^{2}+\|u_{H,i}^{n}\|_{a}^{2}\Big)
\end{split}
\end{equation}
and obtain 
\begin{equation}
\begin{split}
 E^{\frac{1}{2}}=  \sum_{i=1,2}\Big(\|u_{H,i}^{n+1}-u_{H,i}^{n}\|_{}^{2}+\cfrac{\tau^{2}}{2}(\|u_{H,i}^{n+1}\|_{a}^{2}+\|u_{H,i}^{n}\|_{a}^{2})\Big)+\tau^{2}a(u_{H,2}^{n+1},u_{H,1}^{n})+\\
a(u_{H,1}^{n+1},u_{H,2}^{n})-\cfrac{\tau^{2}}{2}\|u_{H,2}^{n+1}-u_{H,2}^{n}\|_{a}^{2}\geq \\
 (1-\gamma^{2}-\cfrac{\alpha^{2}\tau^{2}}{2})\|u_{H,2}^{n+1}-u_{H,2}^{n}\|_{}^{2}+\cfrac{\tau^{2}(1-\gamma_{a})}{2}\sum_{i=1,2}\Big(\|u_{H,i}^{n+1}\|_{a}^{2}+\|u_{H,i}^{n}\|_{a}^{2}\Big).
\end{split}
\end{equation}

\end{proof}

\section{$V_{H,1}$ and $V_{H,2}$ constructions}

In this section, we introduce a possible way to construct
the spaces satisfying (\ref{eq:stab_cond}). 
Here, we follow our previous work \cite{chung_partial_expliict21}.
We will show that the
constrained energy minimization finite element space is a good choice
of $V_{H,1}$ since the CEM basis functions are constructed such that
they are almost orthogonal to a space $\tilde{V}$ which can be easily
defined. To obtain a $V_{H,2}$ satisfying the condition (\ref{eq:stab_cond}),
one of the possible way is using an eigenvalue problem to construct
the local basis function. Before, discussing the construction of $V_{H,2}$,
we will first introduce the CEM finite element space.
In the following, we let $V(S) = H_0^1(S)$ for a proper subset $S\subset \Omega$.

\subsection{CEM method}
\label{sec:cem}

In this section, we will discuss the CEM method for solving the problem
(\ref{eq:problem_weak}). 
We will construct the finite element space by solving
a constrained energy minimization problem. We let $\mathcal{T}_{H}$
be a coarse grid partition of $\Omega$. For each element $K_{i}\in\mathcal{T}_{H}$,
we consider a set of auxiliary basis functions $\{\psi_{j}^{(i)}\}_{j=1}^{L_{i}}\in V(K_{j})$.
We then can define a projection operator $\Pi_{K_{i}}:L^{2}(K_{i})\mapsto V_{aux}^{(i)}\subset L^{2}(K_{i})$
such that 
\[
s_{i}(\Pi_{i}u,v)=s_{i}(u,v)\;\forall v\in V_{aux}^{(i)}:=\text{span}\{\psi_{j}^{(i)}:\;1\leq j\leq L_{i}\},
\]
where 
\begin{equation}
s_{i}(u,v)=\int_{K_{i}}\tilde{\kappa}uv.\label{eq:eigenvalueproblem1}
\end{equation}
and $\tilde{\kappa}=\kappa H^{-2}$ or $\tilde{\kappa}=\kappa\sum_{i}|\nabla\chi_{i}|^{2}$
with some partition of unity ${\chi_{i}}$.

We next define a  projection operator by $\Pi:L^{2}(\Omega)\mapsto V_{aux}\subset L^{2}(\Omega)$
\[
s(\Pi u,v)=s(u,v)\;\forall v\in V_{aux}:=\sum_{i=1}^{N_{e}}V^{(i)},
\]
where $s(u,v):=\sum_{i=1}^{N_{e}}s_{i}(u|_{K_{i}},v|_{K_{i}})$.
For each auxiliary basis functions $\psi_{j}^{(i)}$, we can define
a local basis function $\phi_{j}^{(i)}\in V(K_{i}^{+})$
such that 
\begin{align*}
a(\phi_{j}^{(i)},v)+s(\mu_{j}^{(i)},v) & =0\;\forall v\in V(K_{i}^{+})\\
s(\phi_{j}^{(i)},\nu) & =s(\psi_{j}^{(i)},\nu)\;\forall\nu\in V_{aux}(K_{i}^{+})
\end{align*}
where $K_{i}^{+}$ is an oversampling domain of $K_{i}$, which is a few coarse blocks larger than $K_i$ \cite{chung2018constraint}. 
We then define the space $V_{cem}$ as 
\begin{align*}
V_{cem} & :=\text{span}\{\phi_{j}^{(i)}:\;1\leq i\leq N_{e},1\leq j\leq L_{i}\},
\end{align*}
where $N_e$ is the number of coarse elements. 
The CEM solution $u_{cem}$ is given by
\begin{align*}
({\partial^2\over\partial t^2}(u_{cem}),v) & = -a(u_{cem},v)\;\forall v\in V_{cem}.
\end{align*}

We remark that the $V_{glo}$ is $a-$orthogonal to a space $\tilde{V}:=\{v\in V:\;\Pi(v)=0\}$.
We also know that $V_{cem}$ is closed to $V_{glo}$ and therefore
it is almost orthogonal to $\tilde{V}$. Thus, we can choice $V_{cem}$
to be $V_{H,1}$ and construct a space $V_{H,2}$ in $\tilde{V}$.

\subsection{Construction of $V_{H,2}$}

We discuss two choices for the space $V_{H,2} \subset \tilde{V}$.
We will present the stability properties numerically in Section~\ref{sec:num}. 

\subsubsection{First choice}

We will define basis functions for each coarse neighborhood $\omega_i$,
which is the union of all coarse elements having the $i$-th coarse grid node. 
For each coarse neighborhood $\omega_{i}$, we consider the following
eigenvalue problem: find $(\xi_{j}^{(i)},\gamma_{j}^{(i)})\in ( V_0(\omega_i) \cap \tilde{V}) \times\mathbb{R}$,
\begin{align}
\label{eq:eigenvalueproblem_case2}
\int_{\omega_{i}}\kappa\nabla\xi_{j}^{(i)}\cdot\nabla v & = \cfrac{\gamma_{j}^{(i)}}{H^2}\int_{\omega_{i}}\xi_{j}^{(i)}v, \;\forall v\in V_0(\omega_i) \cap \tilde{V}.
\end{align}
We arrange the eigenvalues by $\gamma_1^{(i)}  \leq \gamma_2^{(i)} \leq \cdots$.
In order to obtain a reduction in error, we will select the first few $J_i$ dominant eigenfunctions corresponding to smallest eigenvalues of (\ref{eq:eigenvalueproblem_case2}). 
We define
\begin{equation*}
V_{H,2} = \text{span} \{ \xi_j^{(i)} \; | \; \forall \omega_i, \forall 1\leq j \leq J_i\}.
\end{equation*}

\subsubsection{Second choice}

The second choice of $V_{H,2}$ is based on the CEM type finite
element space. For
each coarse element $K_{i}$, we will solve an eigenvalue problem to obtain the auxiliary
basis. More precisely, we find eigenpairs $(\xi_{j}^{(i)},\gamma_{j}^{(i)})\in(V(K_{i})\cap\tilde{V})\times\mathbb{R}$ by solving
\begin{align}
\label{eq:spectralCEM2}
\int_{K_{i}}\kappa\nabla\xi_{j}^{(i)}\cdot\nabla v & =\gamma_{j}^{(i)}\int_{K_{i}}\xi_{j}^{(i)}v, \;\ \forall v\in V(K_{i})\cap\tilde{V}.
\end{align}
For each $K_i$, we choose the first few $J_i$ eigenfunctions corresponding to the smallest $J_i$ eigenvalues. 
The span of these functions form a space which is called $V_{aux,2}$.
For each auxiliary basis function $\xi_j^{(i)} \in V_{aux,2}$, we define a basis function $\zeta_{j}^{(i)} \in V(K_i^+)$ such
that $\mu_{j}^{(i)} \in V_{aux,1}$, $ \mu_{j}^{(i),2} \in V_{aux,2}$ and 
\begin{align}
a(\zeta_{j}^{(i)},v)+s(\mu_{j}^{(i),1},v)+ ( \mu_{j}^{(i),2},v) & =0, \;\forall v\in V(K_i^+), \label{eq:v2a} \\
s(\zeta_{j}^{(i)},\nu) & =0, \;\forall\nu\in V_{aux,1}, \label{eq:v2b} \\
(\zeta_{j}^{(i)},\nu) & =( \xi_{j}^{(i)},\nu), \;\forall\nu\in V_{aux,2}. \label{eq:v2c}
\end{align}
where we use the notation $V_{aux,1}$ to denote the space $V_{aux}$ defined in Section \ref{sec:cem},
and $K_i^+$ is an oversampling domain a few coarse blocks larger than $K_i$ (see \cite{chung2018constraint}).
We define $$V_{H,2}=\text{span}\{\zeta_{j}^{(i)}| \; \forall K_i, \; \forall 1 \leq j\leq J_i\}.$$

\section{Numerical Result}
\label{sec:num}

In this section, we will present representative numerical results.
We consider the following mesh and time step parameters in all examples.
\[
H=1/10,\;h=1/100,\;dt=0.006,\;T=0.05.
\]
We consider two
 medium parameter $\kappa(x)$, where one is a simpler compared to the
other (see Figure \ref{fig:kappa}). 
Both medium parameters are high contrast and multiscale. 
We choose the source term as a source distrubuted in a small
region as shown in Figure \ref{fig:kappa}). 
It, $f$, is given by 
\begin{equation}
\label{eq:source}
f(t,x)=\cfrac{2-2/f_{0}}{4h^{2}}exp(-\pi^{2}f_{0}^{2}(t-2/f_{0})^{2})f_{x}(x),
\end{equation}
where $f_0$ is a frequency. We will consider two values for
$f_0$. In numerical results, we will refer to the case
(Case 1 refers to first medium and Case 2 refers to the second
medium) and the frequency $f_0$.

\begin{figure}[H]
\centering
\includegraphics[scale=0.35]{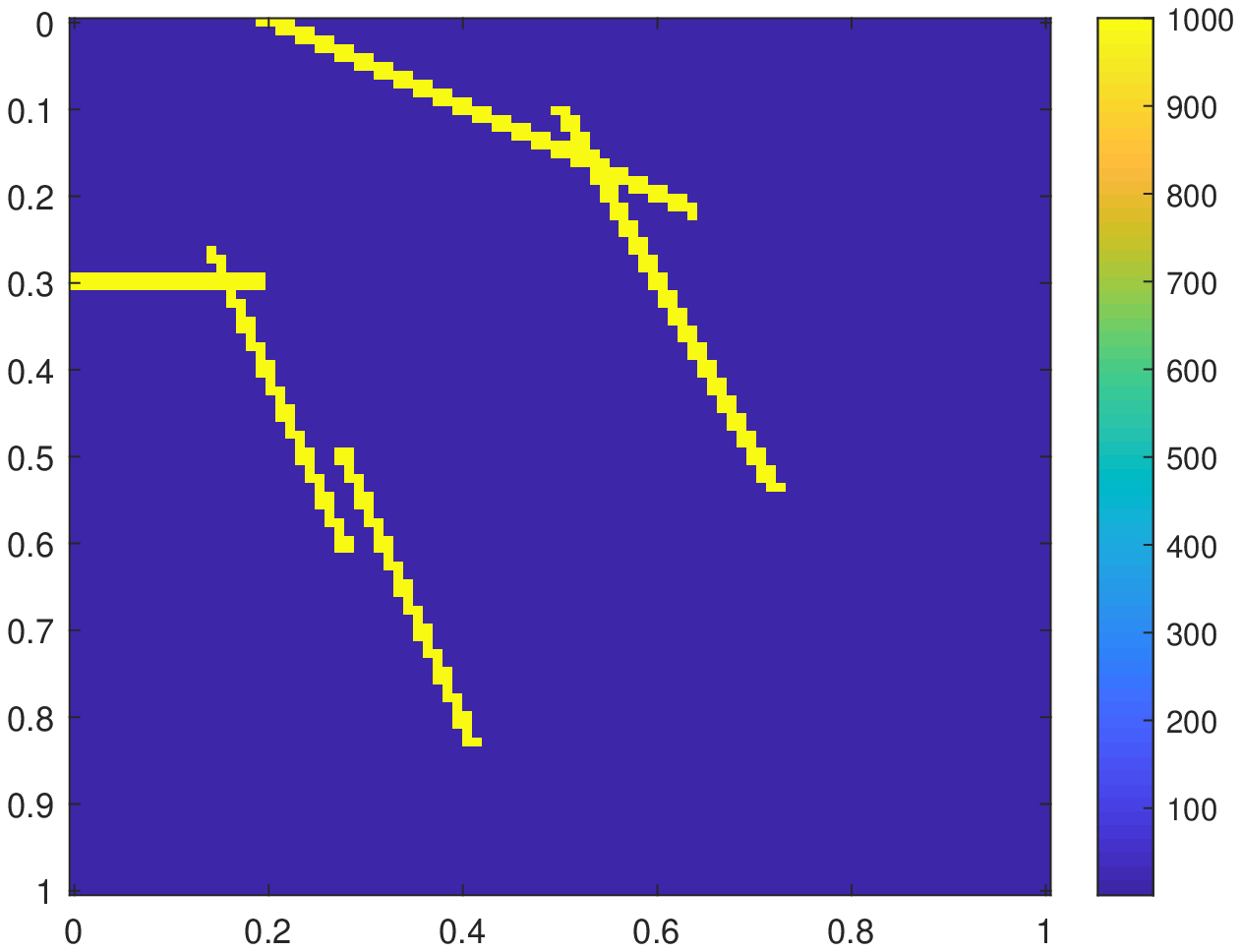} 
\includegraphics[scale=0.35]{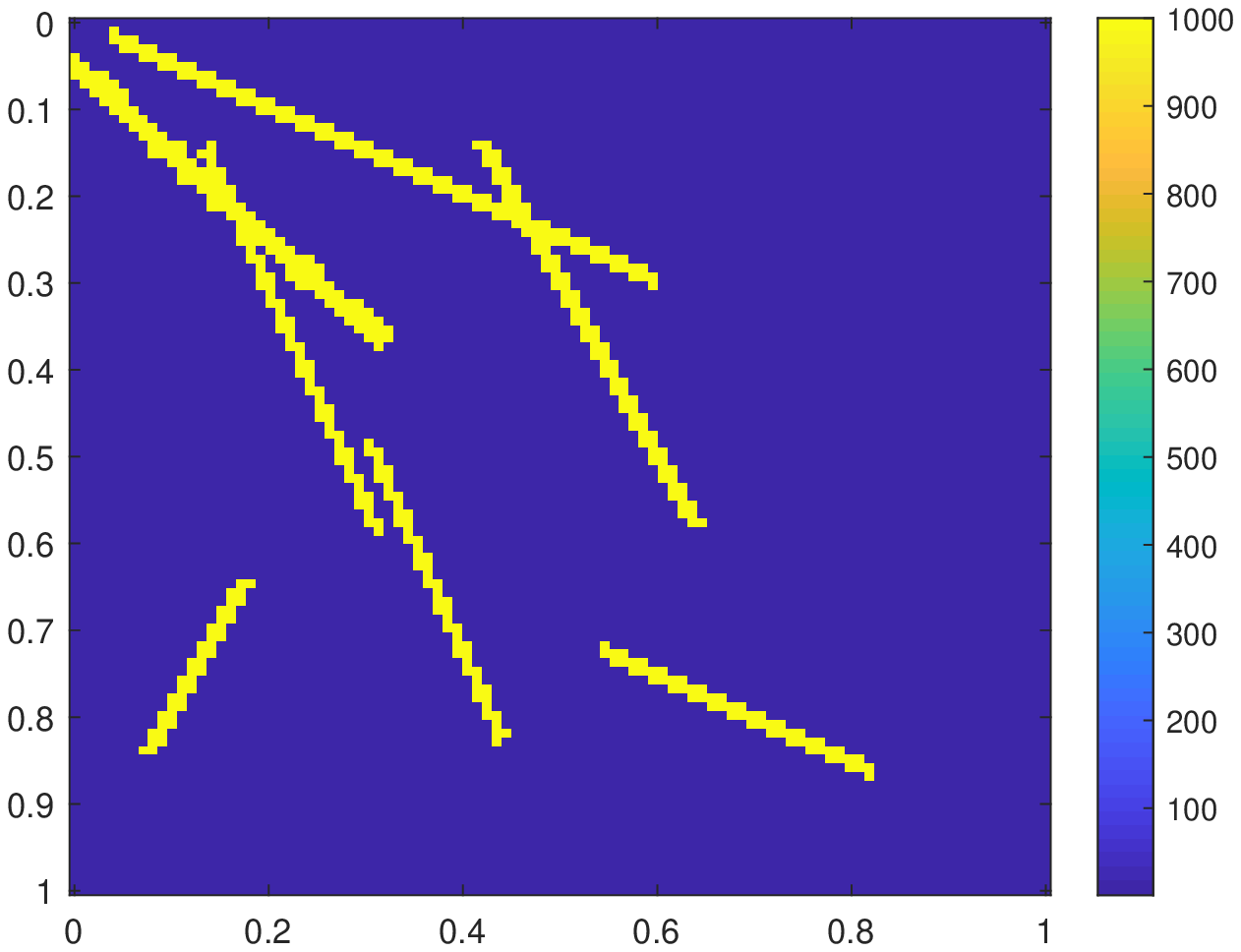} 
\includegraphics[scale=0.35]{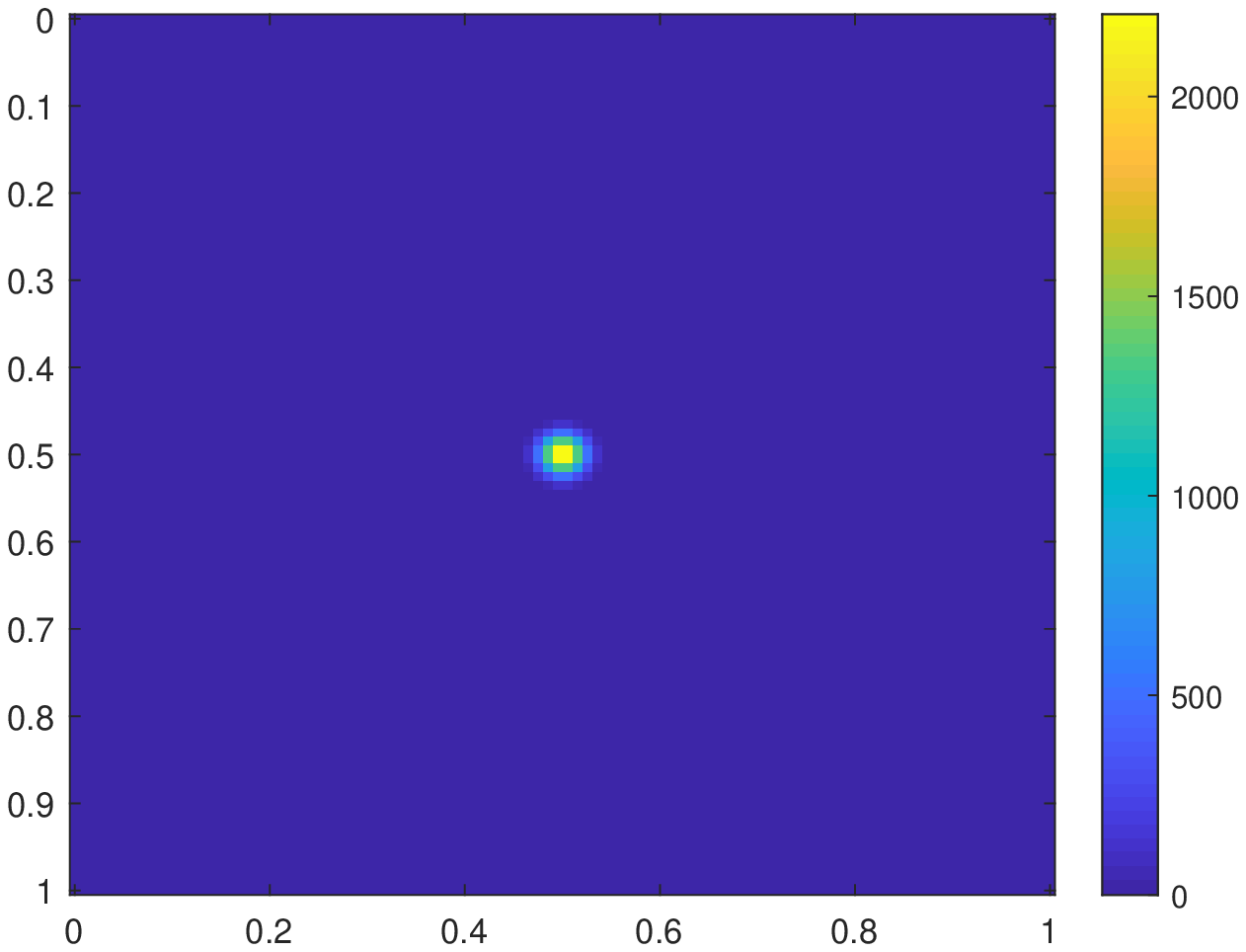}
\caption{Left: $\kappa$ for Case 1 (denote $\kappa_1(x)$); Middle: $\kappa$ for Case 2 (denote $\kappa_2(x)$); Right: $f_{x}(x)$ from (\ref{eq:source}).}
\label{fig:kappa}
\end{figure}

\subsection{Case 1 and $f_0=1/2$.}

In our first numerical example, we consider the case with the conductivity
$\kappa_1(x)$. In Figure \ref{fig:case1low}, we plot the reference 
solution computed
on the fine grid (top-left), the solution computed using CEM-GMsFEM (top-right)
(without additional basis functions), the solution computed using additional basis functions with implicit method (bottom-left), and the solution computed using additional basis functions using partially explicit method (bottom-right).
In all cases, the reference solution is computed using standard explicit
finite element
discretization with small time step (in our case, $\tau=1e-4$).
First, we note that it can be seen that additional basis functions provide 
improved results. This is because of the choice of source term since
CEM-GMsFEM solution requires more detailed information to improve the solution.
 In Figure \ref{fig:case1low}, we plot the solution at the time
$T=0.3$. In Figure \ref{fig:case1low2}, we plot the solution and its approximations (as in Figure \ref{fig:case1low}) for $T=0.6$. The errors are plotted 
in Figure \ref{fig:case1low_error1} and  Figure \ref{fig:case1low_error2}, where we plot both $L_2$ and energy errors. The errors are computed in a standard way using finite element discretization. In Figure \ref{fig:case1low_error2}, we zoom the error graph into small time interval since the error at initial time is larger.  Our main observation is that our proposed approach that treats additional degrees of freedom explicitly provides a similar result as the approach where all degrees of freedom are treated implicitly.

\begin{figure}[H]
\centering
\includegraphics[scale=0.35]{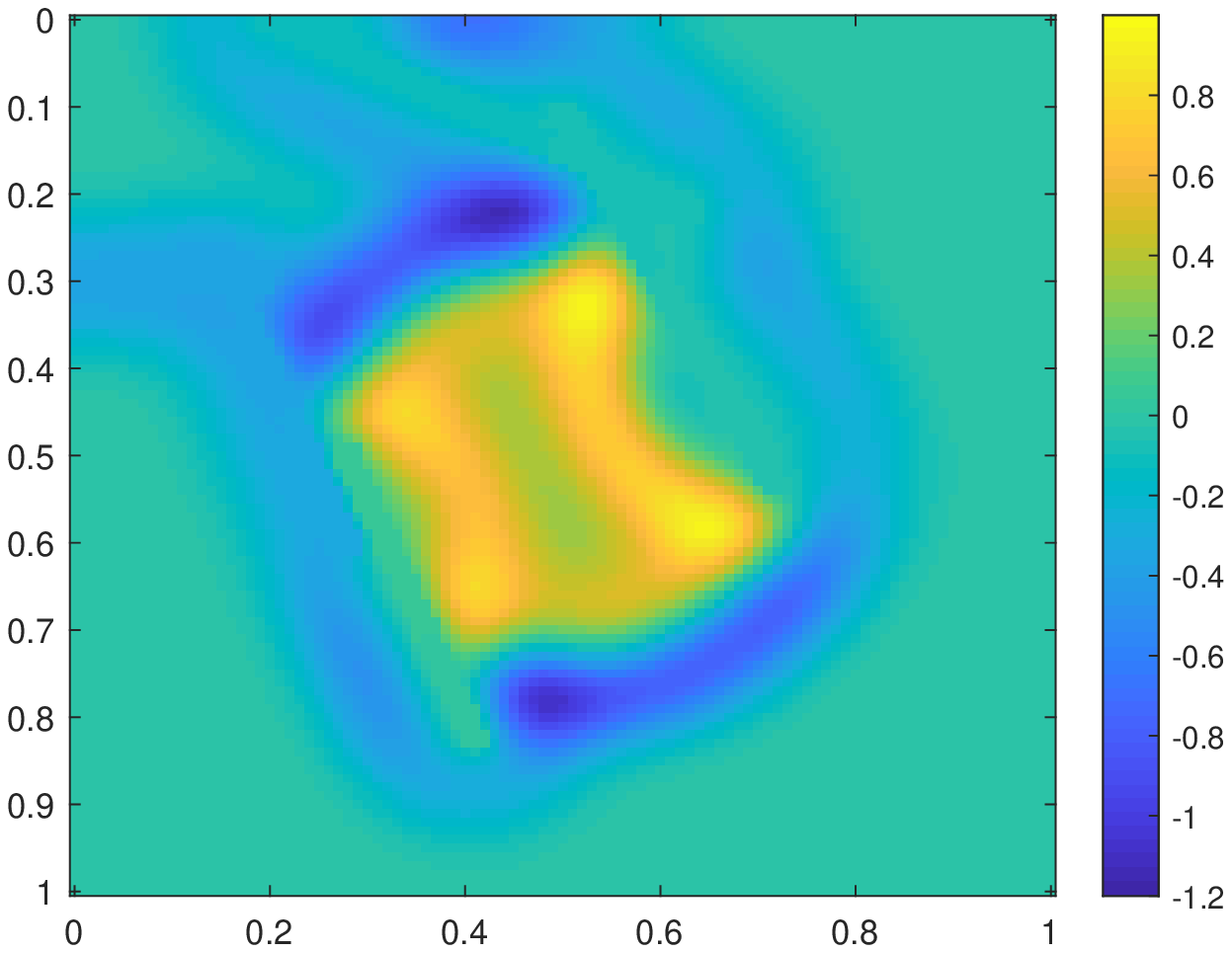} 
\includegraphics[scale=0.35]{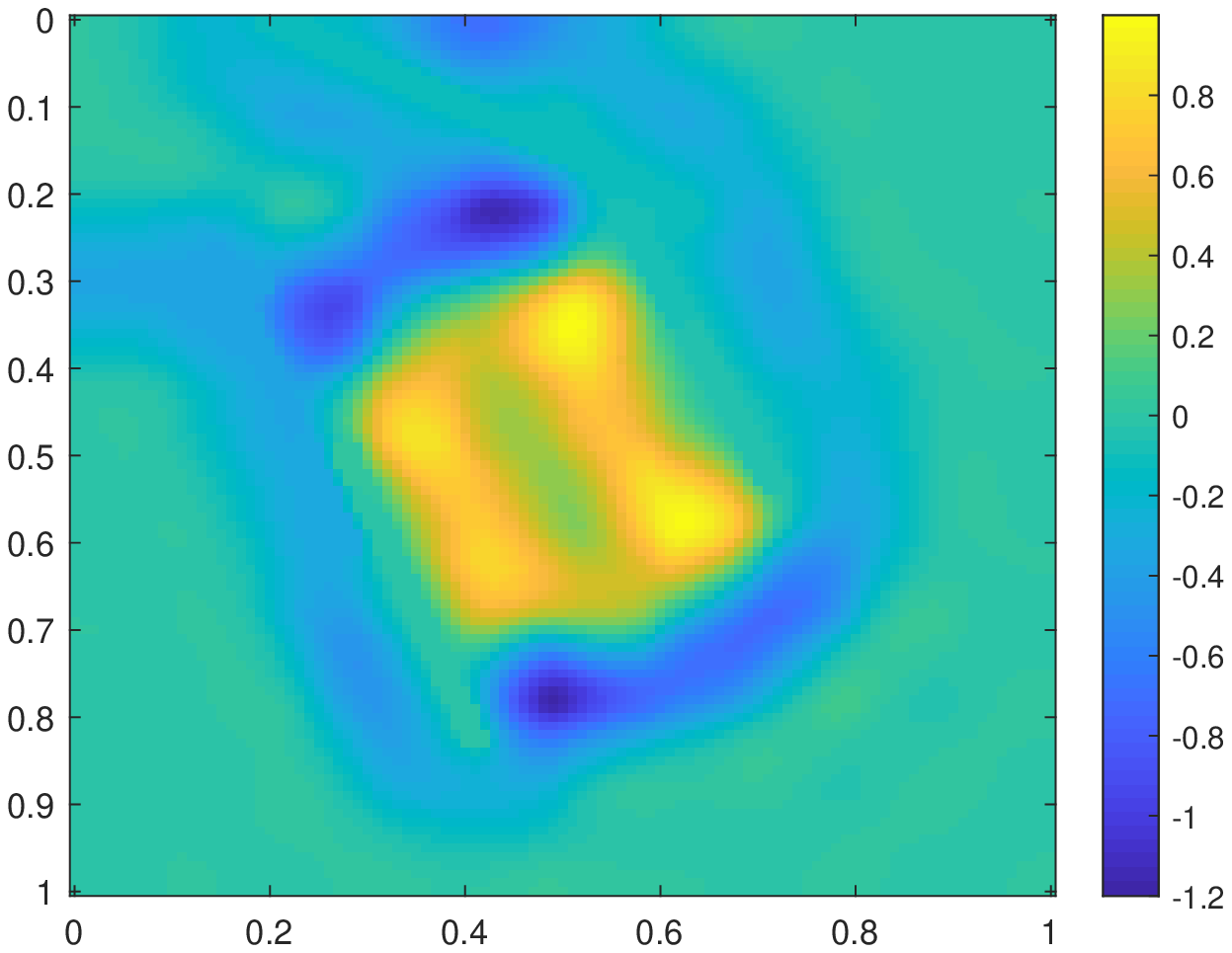}

\includegraphics[scale=0.35]{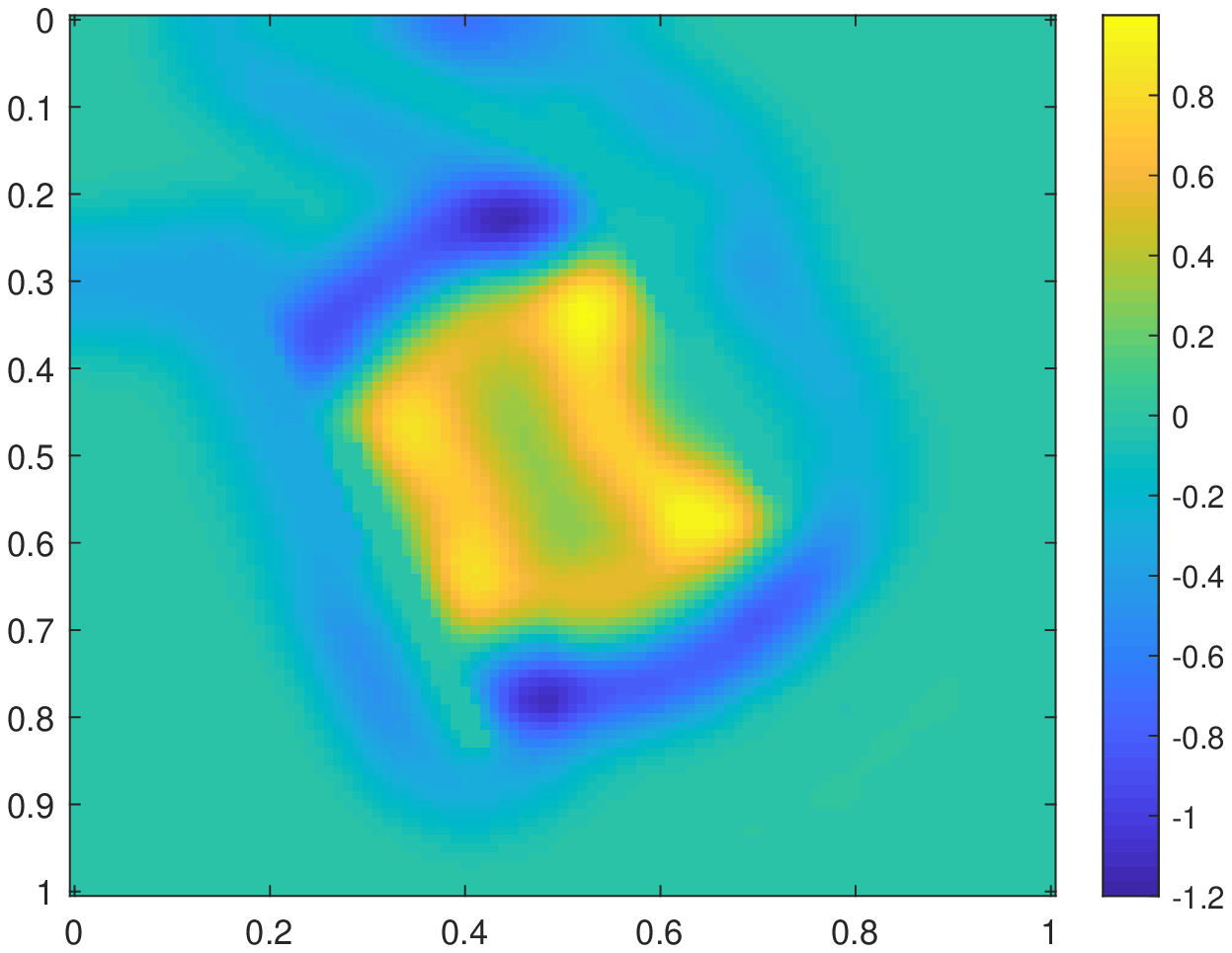} 
\includegraphics[scale=0.35]{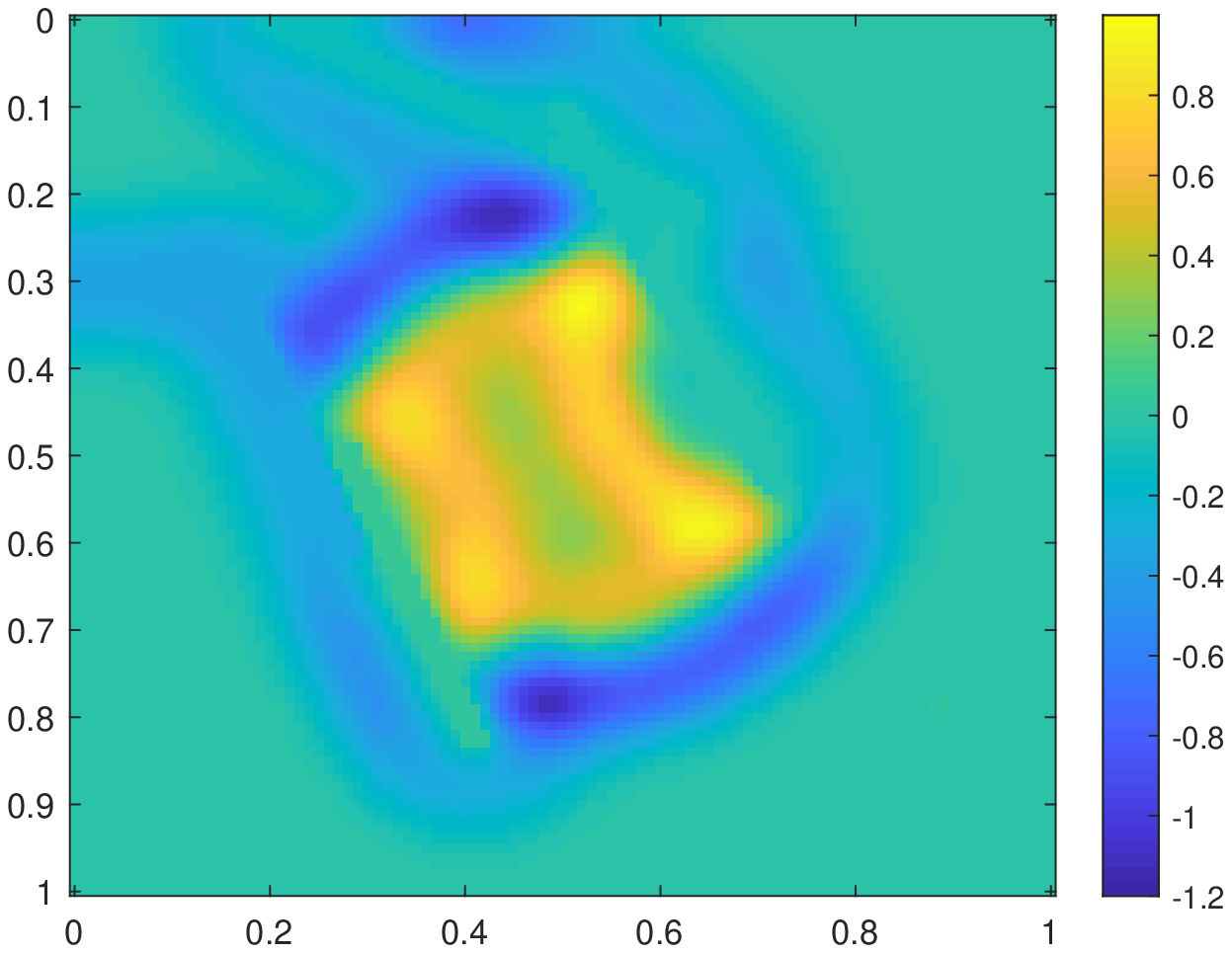} 
\caption{Snapshot at $T=0.3$. Top-left: reference solution. Top-Right: Implicit CEM-GMsFEM solution. Top-left: Proposed splitting method with additional basis functions. Top-right: Implicit CEM-GMsFEM with additional basis.}
\label{fig:case1low}
\end{figure}

\begin{figure}[H]
\centering
\includegraphics[scale=0.35]{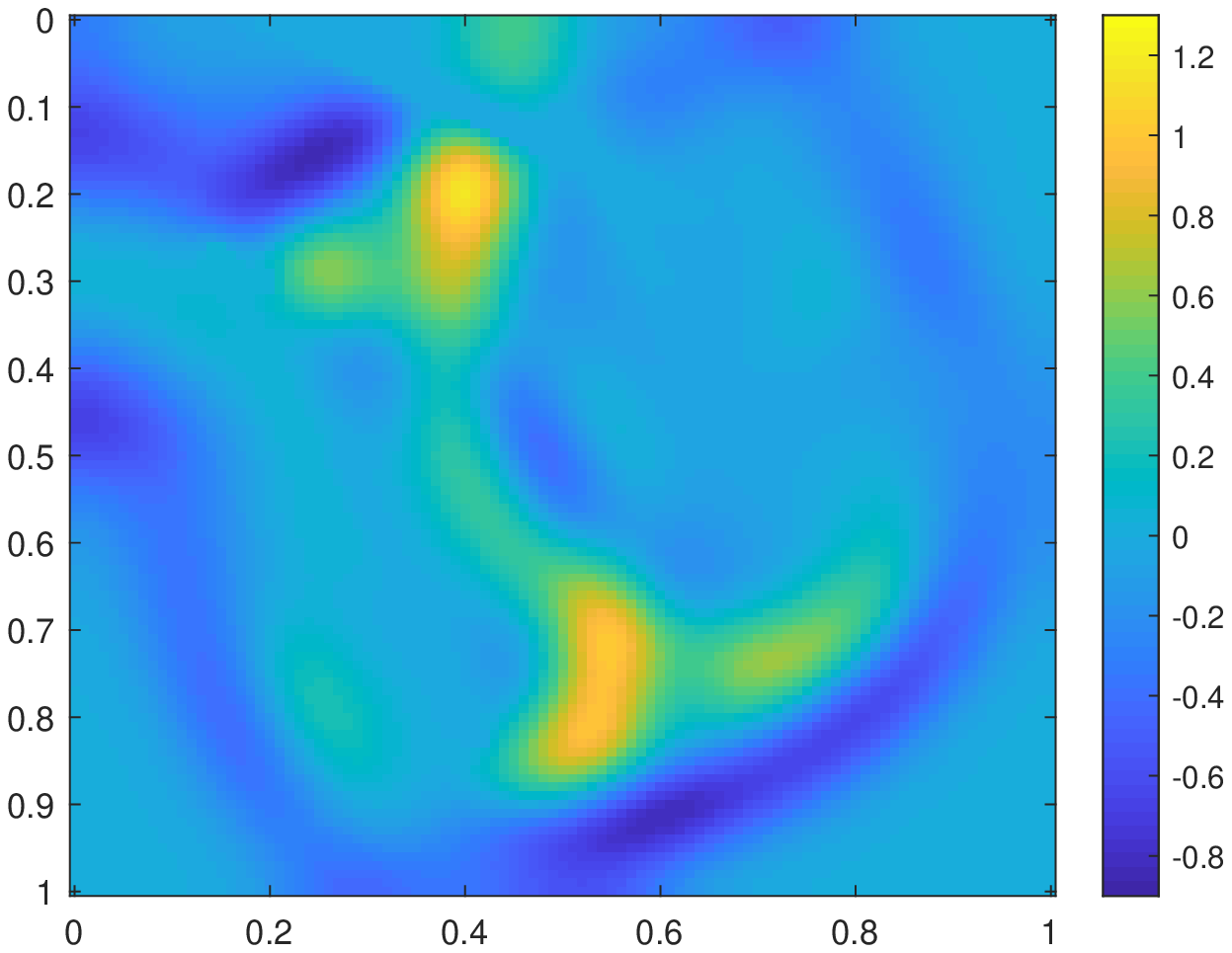} 
\includegraphics[scale=0.35]{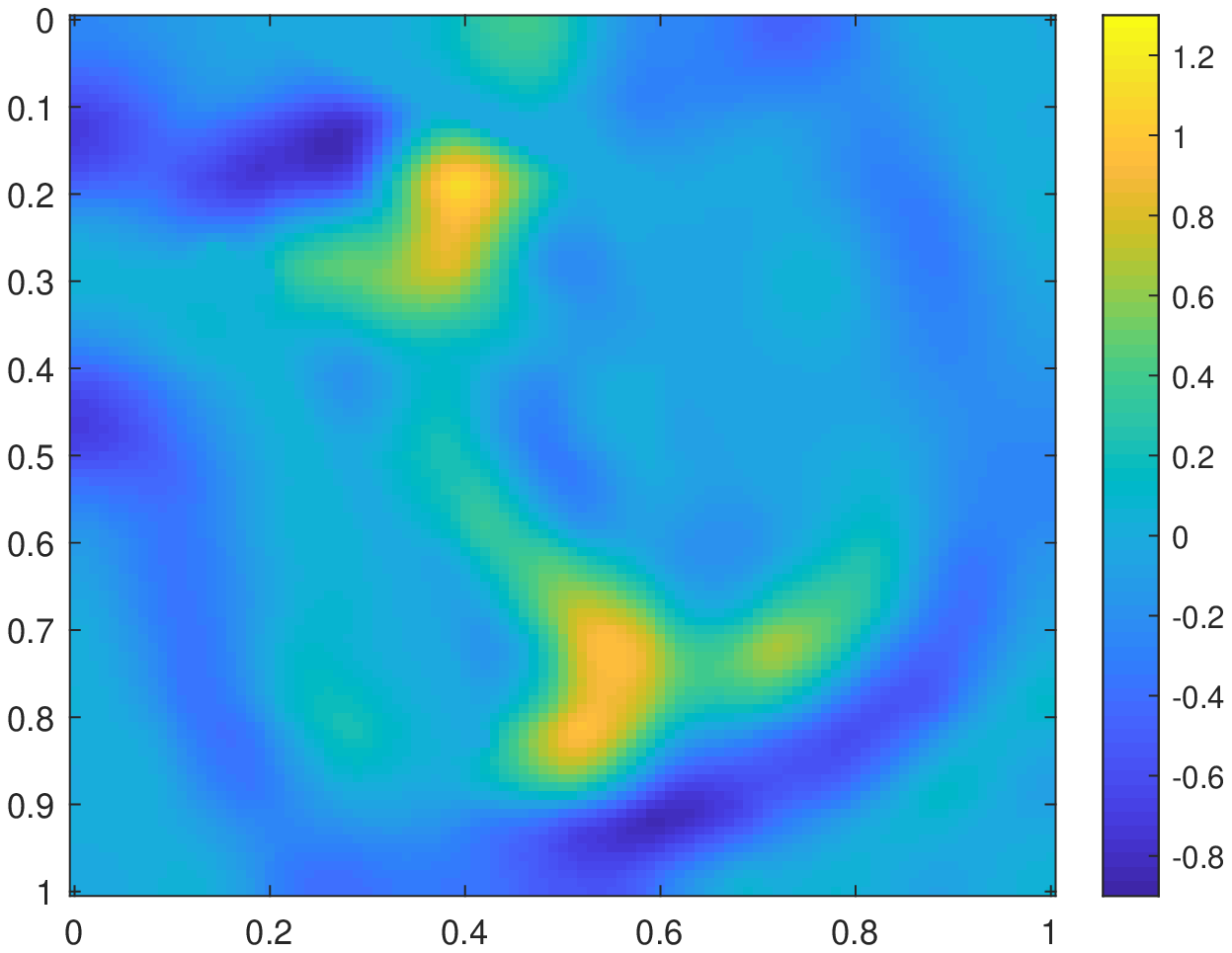}

\includegraphics[scale=0.35]{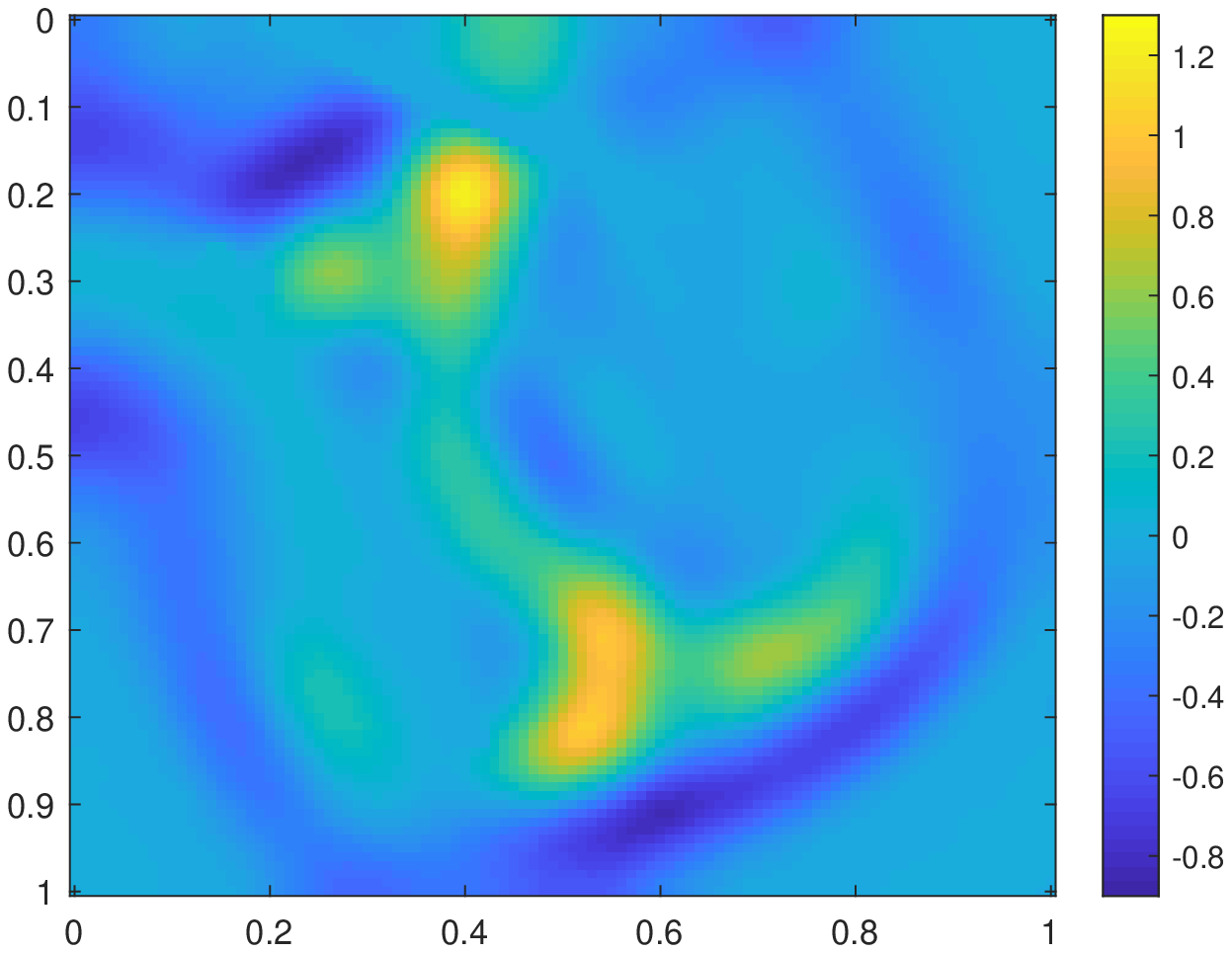} 
\includegraphics[scale=0.35]{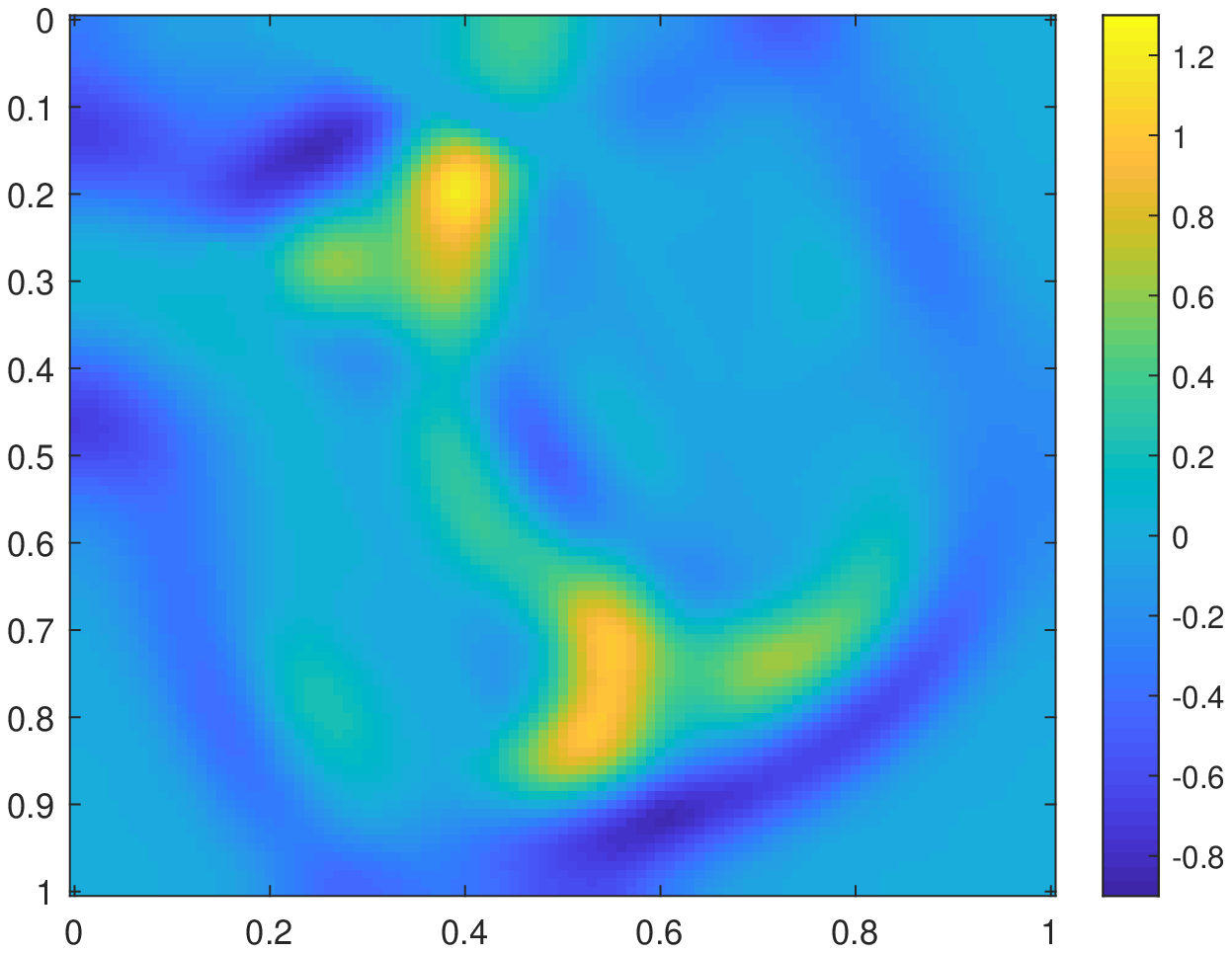}
\caption{Snapshot at $T=0.6$. op-left: reference solution. Top-Right: Implicit CEM-GMsFEM solution. Top-left: Proposed splitting method with additional basis functions. Top-right: Implicit CEM-GMsFEM with additional basis.}
\label{fig:case1low2}
\end{figure}

\begin{figure}[H]
\centering
\includegraphics[scale=0.4]{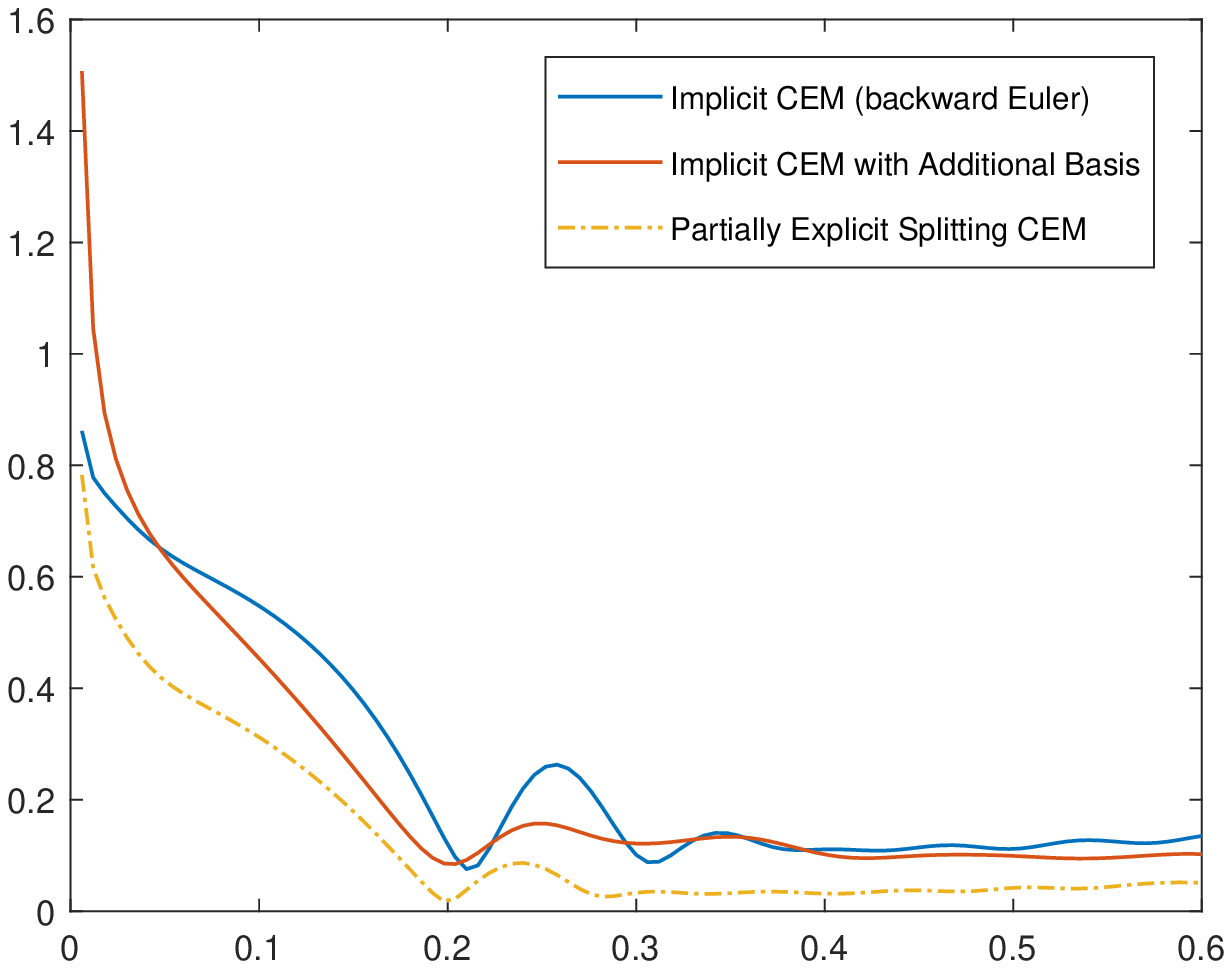} \includegraphics[scale=0.4]{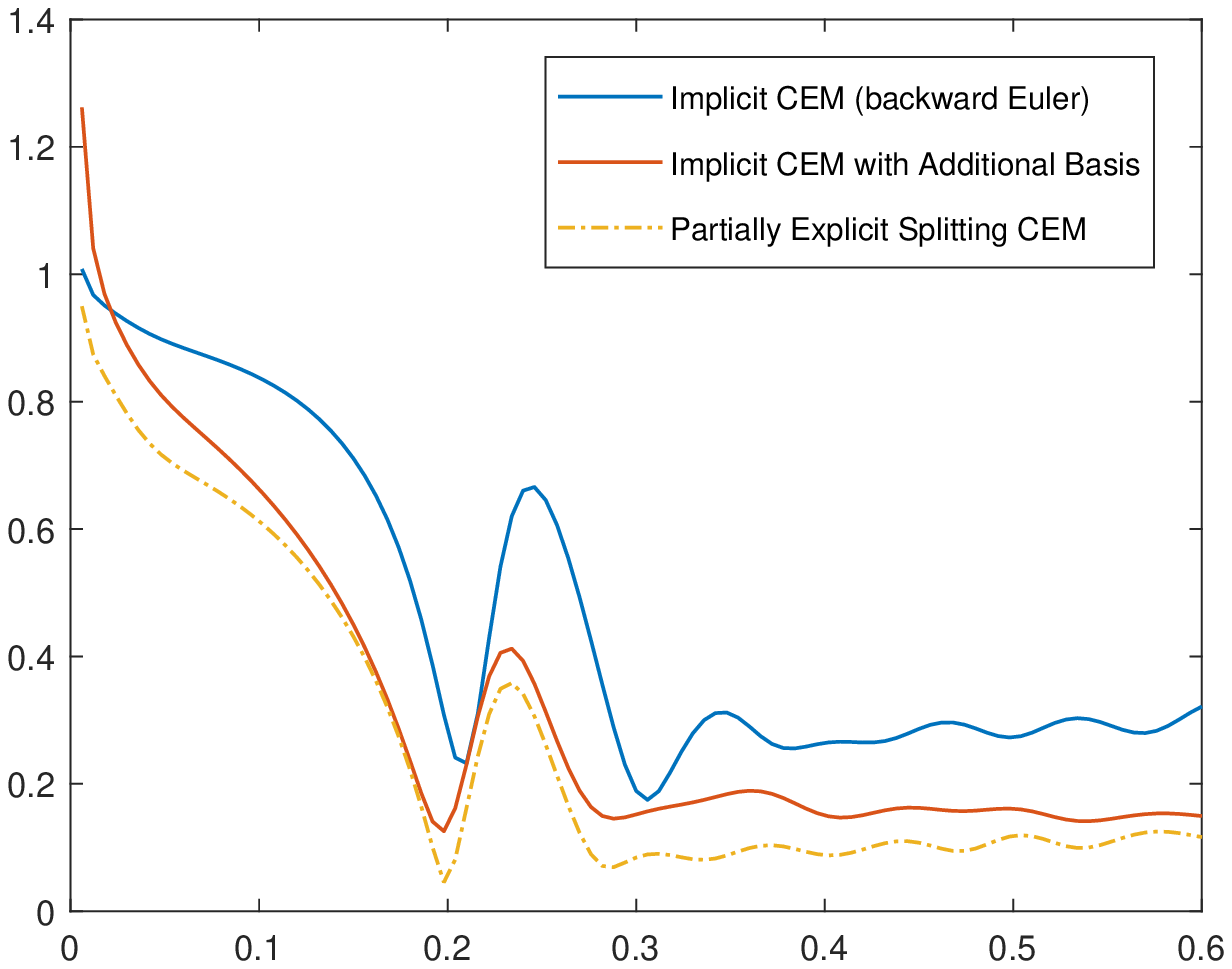}
\caption{Second type of $V_{2,H}$ (CEM Dof: $300$, $V_{2,H}$ Dof: $300$).
Left: $L_{2}$ error. Right: Energy error.}
\label{fig:case1low_error1}
\end{figure}

\begin{figure}[H]
\centering
\includegraphics[scale=0.4]{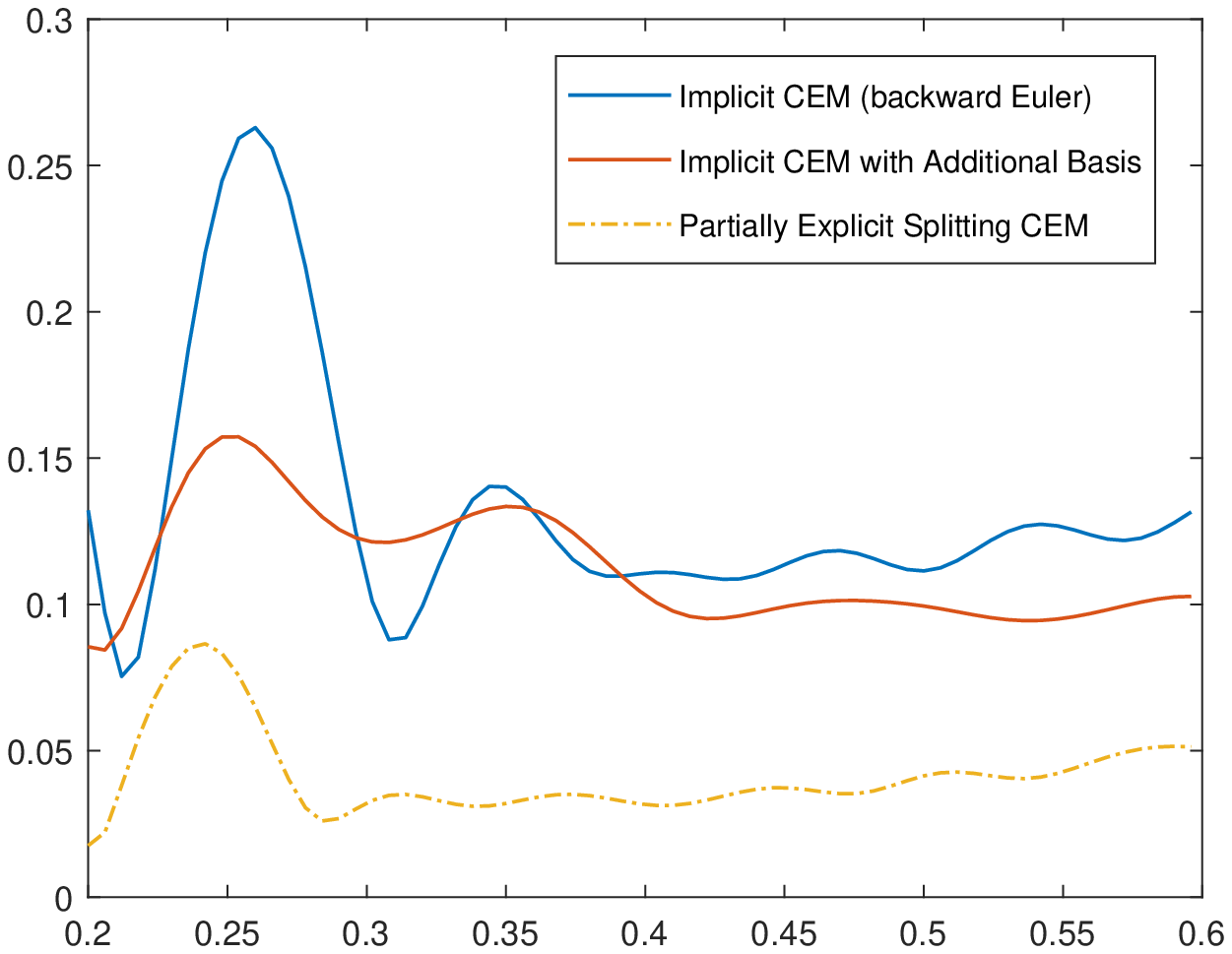} \includegraphics[scale=0.4]{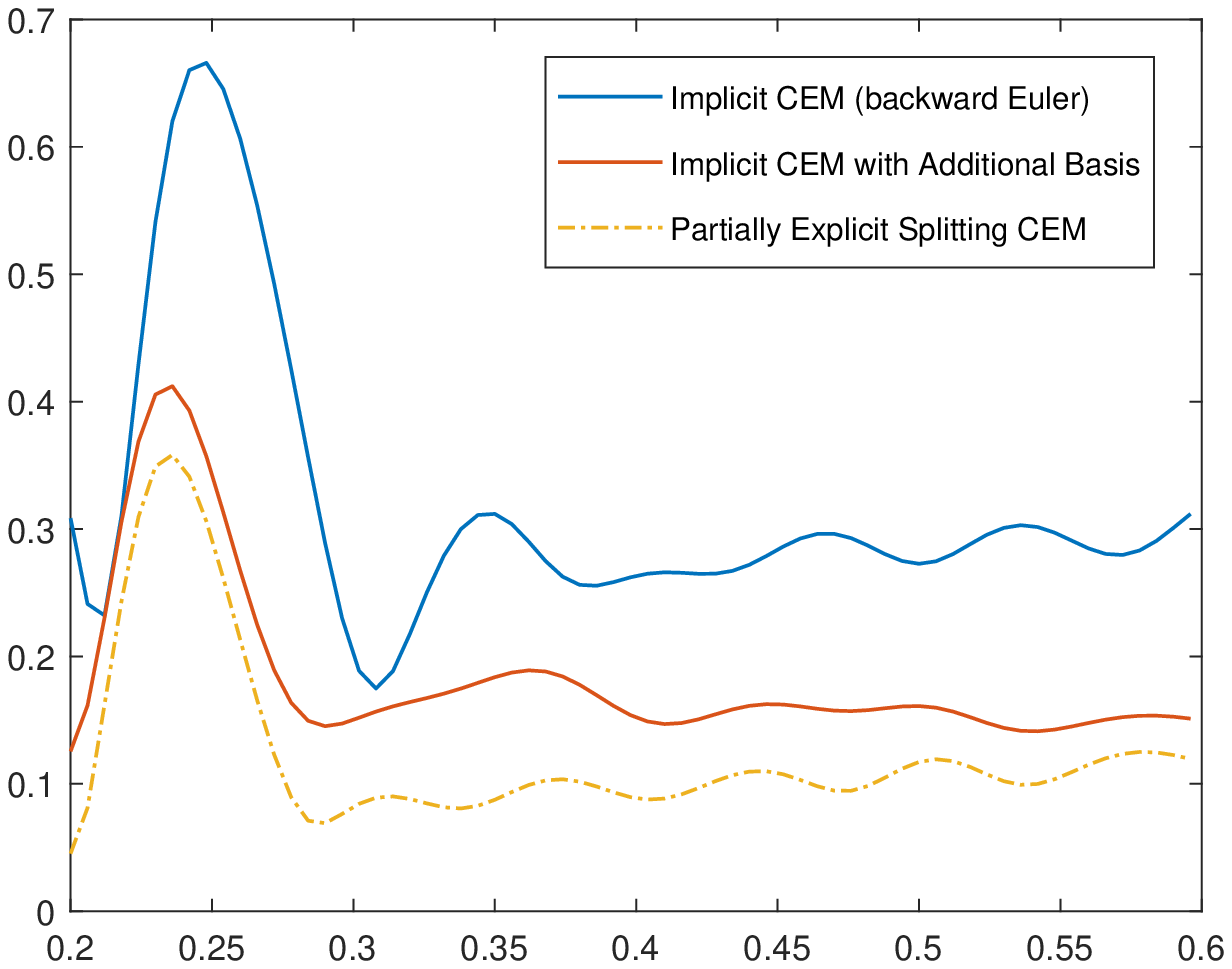}
\caption{The error in the time interval from $0.2$ to $0.6$. Second type of $V_{2,H}$ is used (CEM Dof: $300$, $V_{2,H}$ Dof: $300$).
Left: $L_{2}$ error. Right: Energy error.}
\label{fig:case1low_error2}
\end{figure}

\subsection{Case 2 and  $f_0=1/2$. }

In our second numerical example, we consider the case with the conductivity
$\kappa_2(x)$. As in the previous case, we first depict
the reference solution (top-left), the solution computed using CEM-GMsFEM 
(top-right)
(without additional basis functions), the solution 
computed using additional basis functions with implicit 
method (bottom-left), and 
the solution computed using additional basis functions 
using partially explicit method (bottom-right) in Figure \ref{fig:case2low}.
We see that additional basis functions provide an improved result.
Moreover, there is a little difference between 
the implicit solution that uses additional basis functions
and the partially explicit solution that treats additional degrees
of freedom explicitly. 
 In Figure \ref{fig:case2low}, we plot the solution at the time
$T=0.3$ and we can make similar observations.
 In Figure \ref{fig:case2low2}, we plot the solution and its approximations (as in Figure \ref{fig:case2low}) for $T=0.6$. The errors are plotted 
in Figure \ref{fig:case2low_error1} and  Figure \ref{fig:case2low_error2}, where we plot both $L_2$ and energy errors. In Figure \ref{fig:case2low_error2}, we zoom the error graph into smaller time interval.  Our main observation is that our proposed approach that treats additional degrees of freedom explicitly provides a similar result as the approach where all degrees of freedom are treated implicitly.

\begin{figure}[H]
\centering
\includegraphics[scale=0.35]{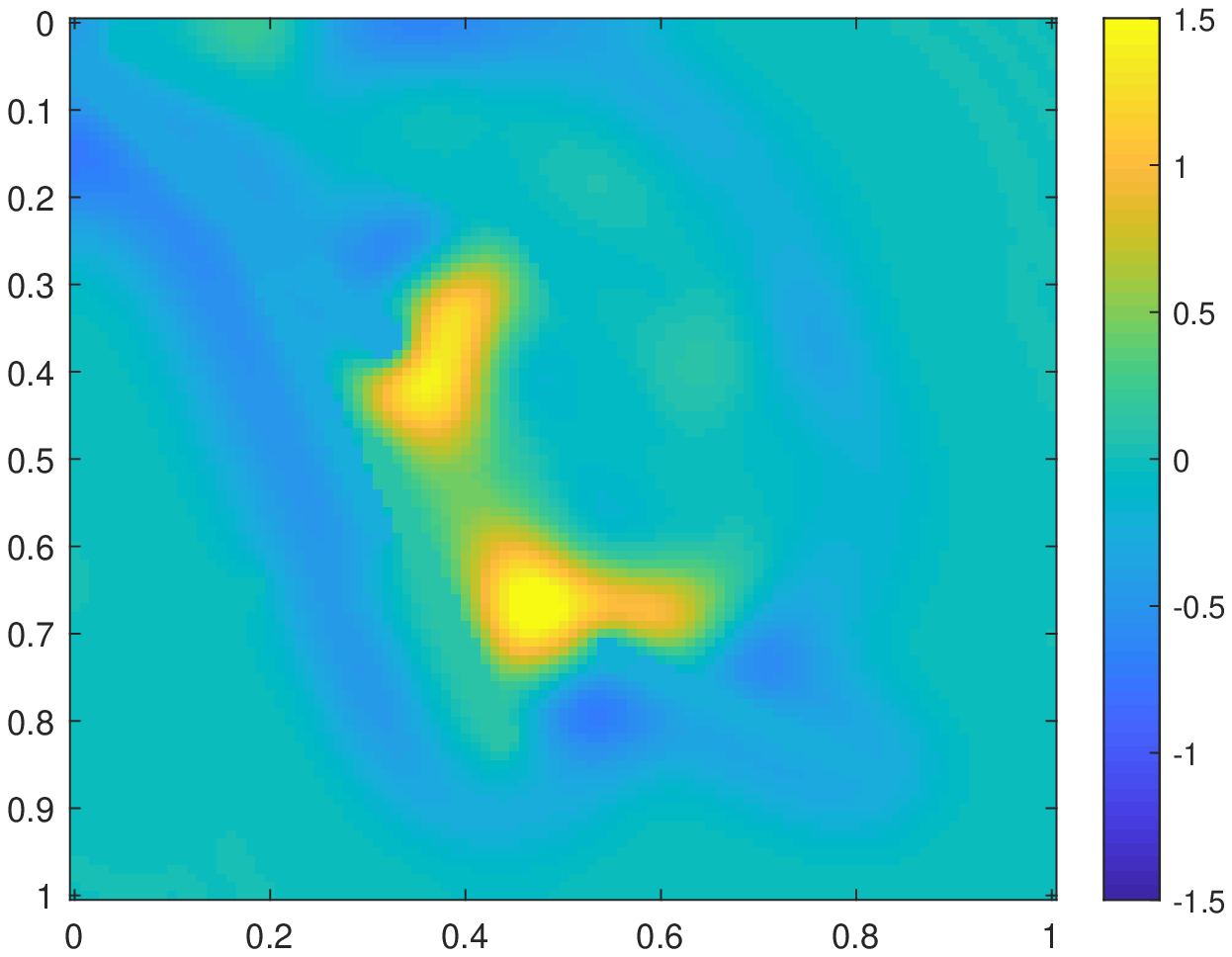} 
\includegraphics[scale=0.35]{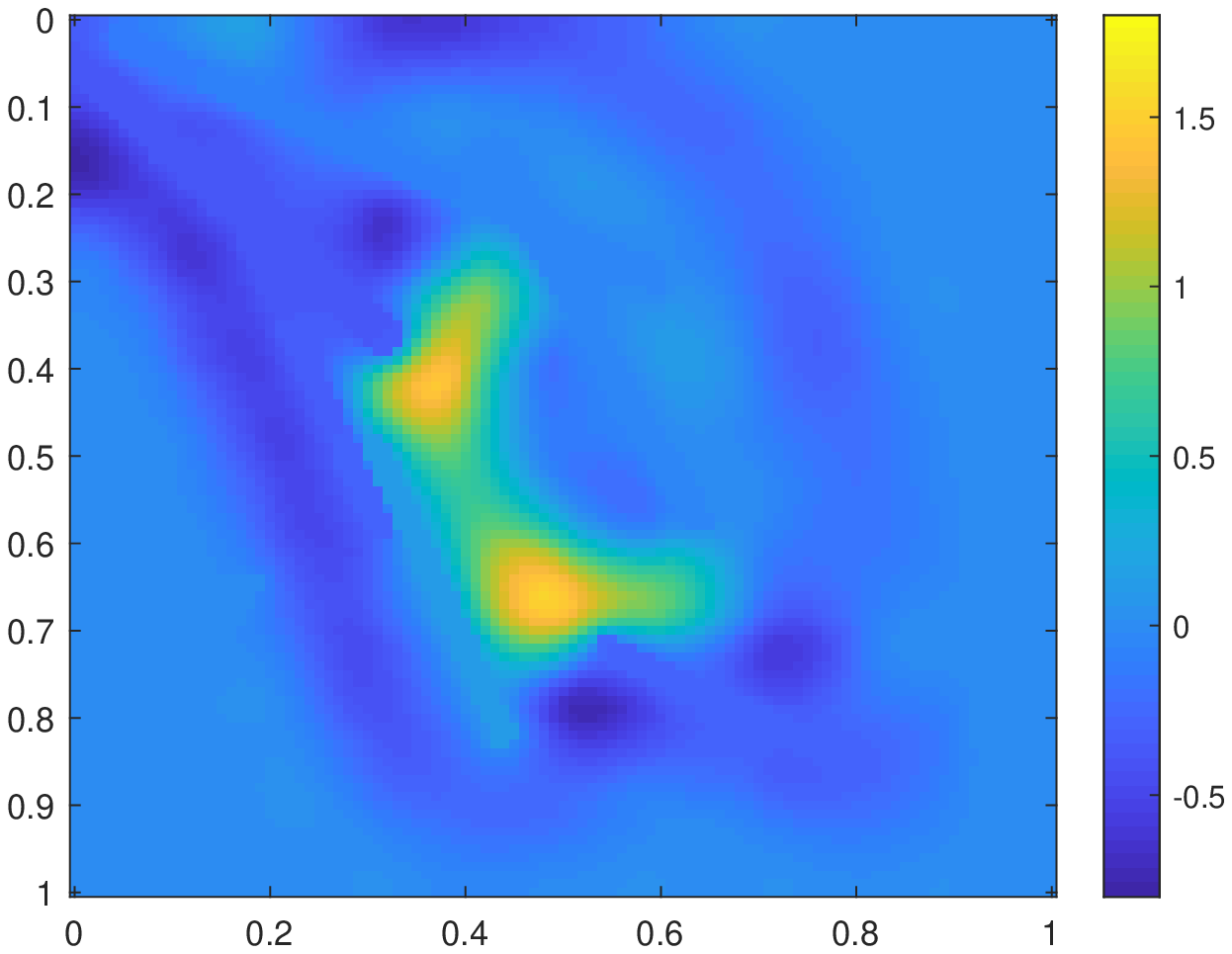}

\includegraphics[scale=0.35]{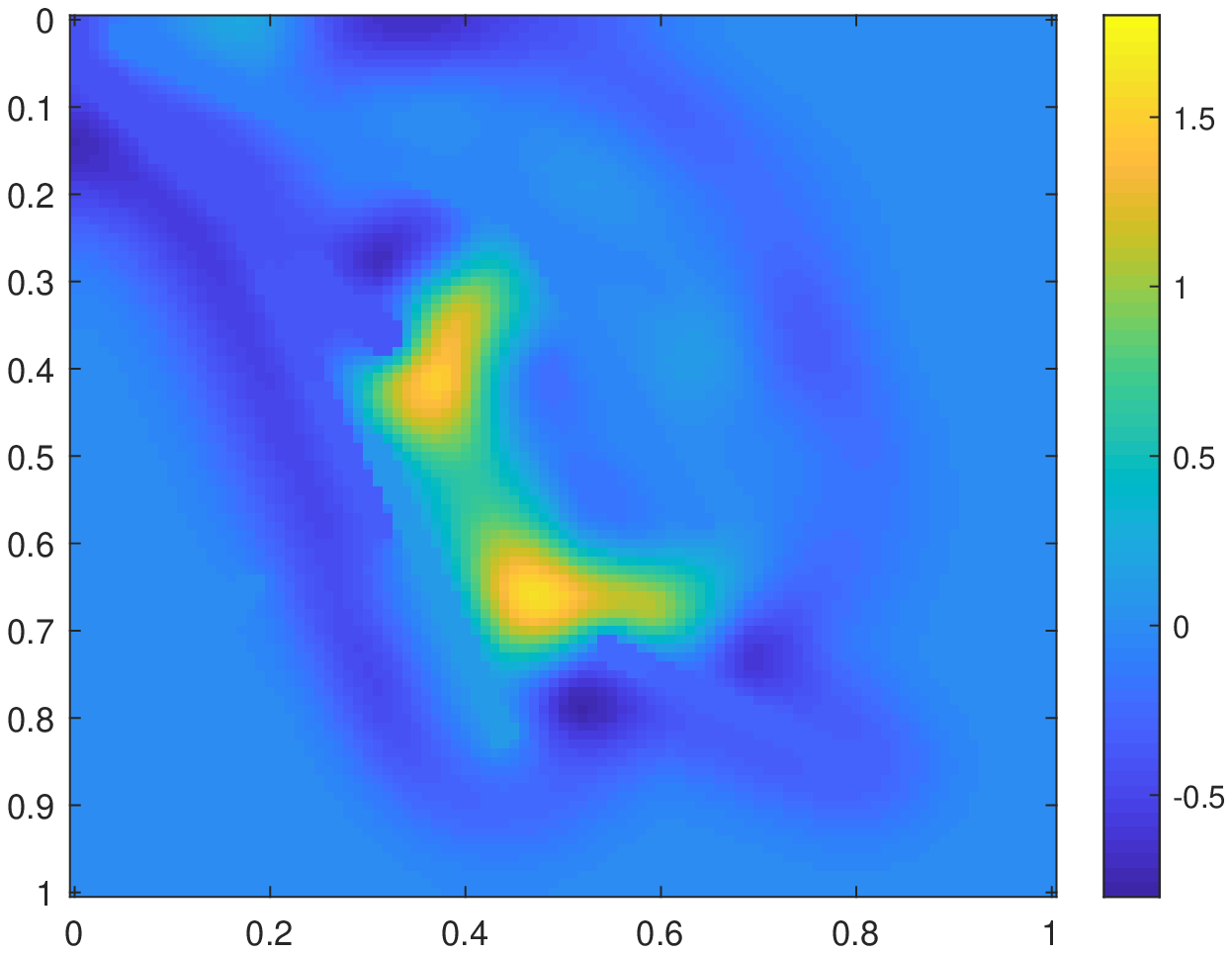} 
\includegraphics[scale=0.35]{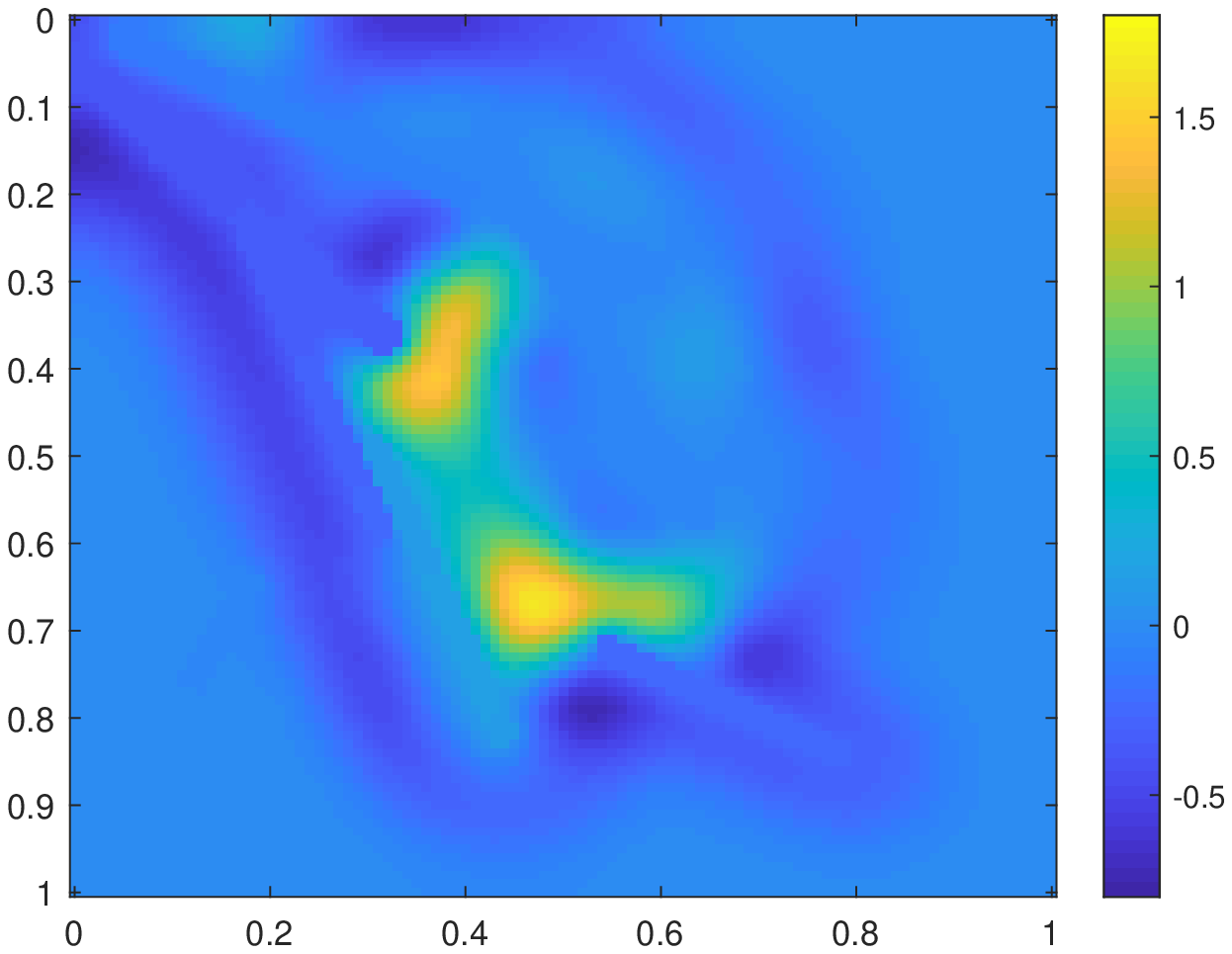} 
\caption{Snapshot at $T=0.3$. Top-left: reference solution. Top-Right: Implicit CEM-GMsFEM solution. Top-left: Proposed splitting method with additional basis functions. Top-right: Implicit CEM-GMsFEM with additional basis.}
\label{fig:case2low}
\end{figure}

\begin{figure}[H]
\centering
\includegraphics[scale=0.35]{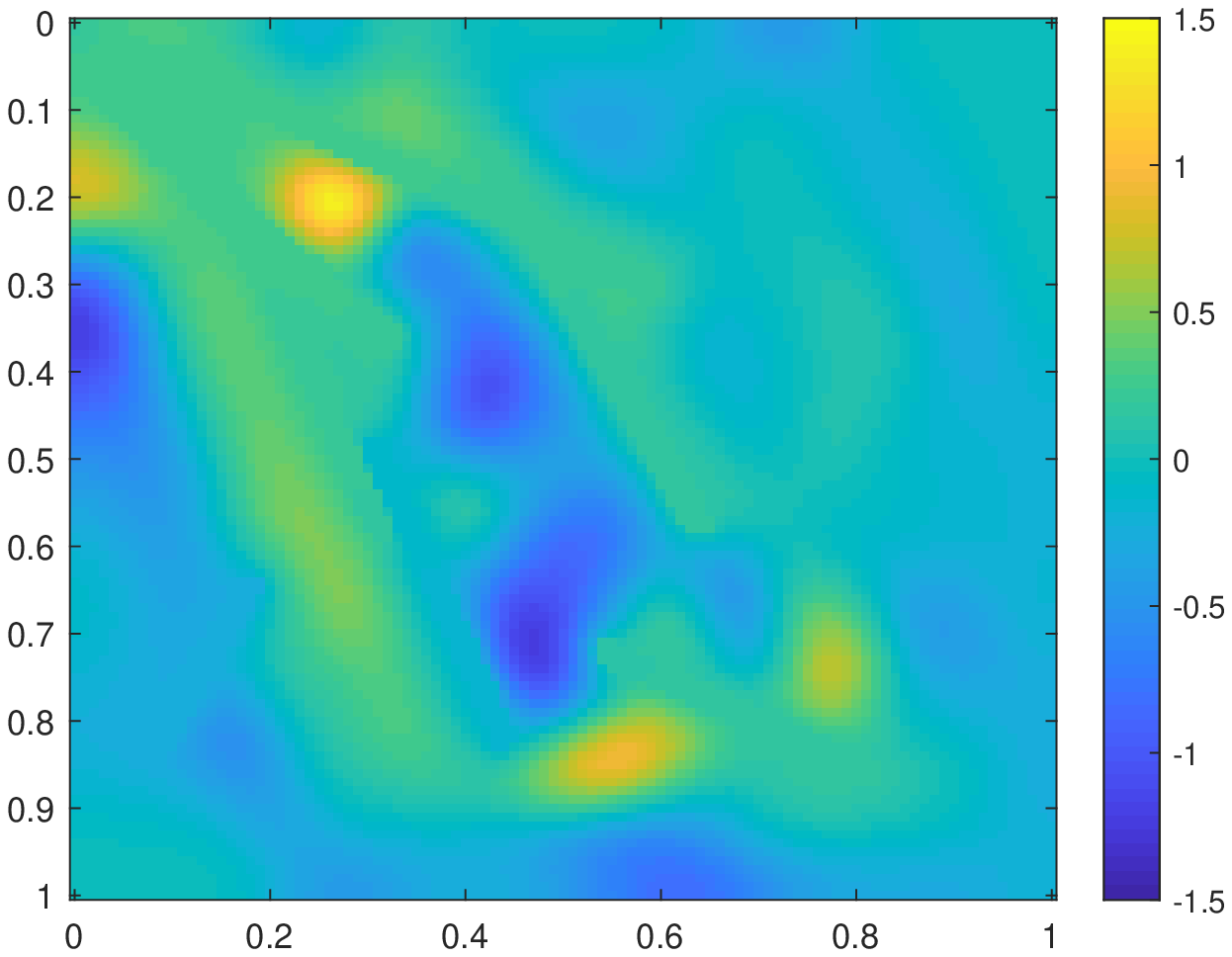} 
\includegraphics[scale=0.35]{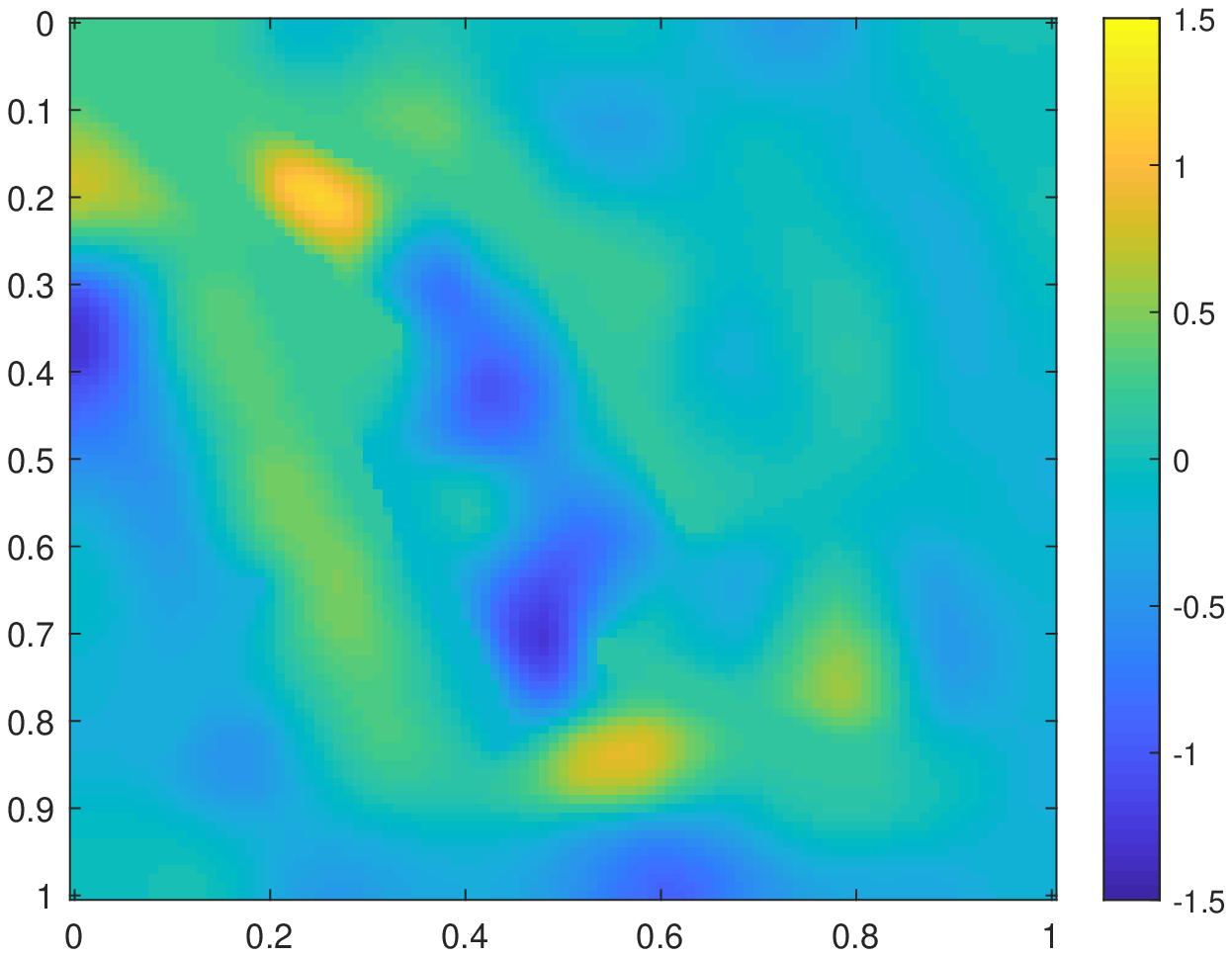}

\includegraphics[scale=0.35]{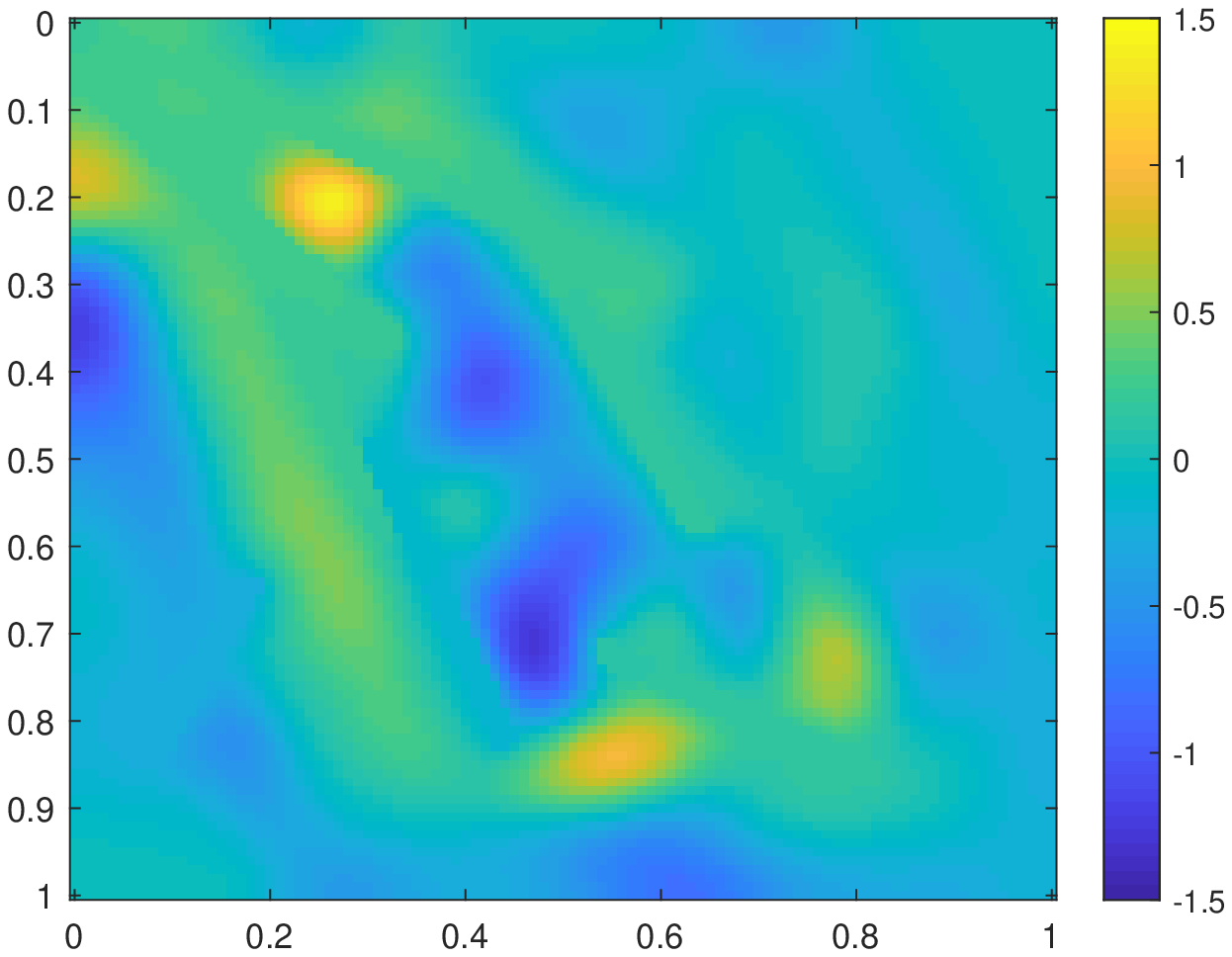} 
\includegraphics[scale=0.35]{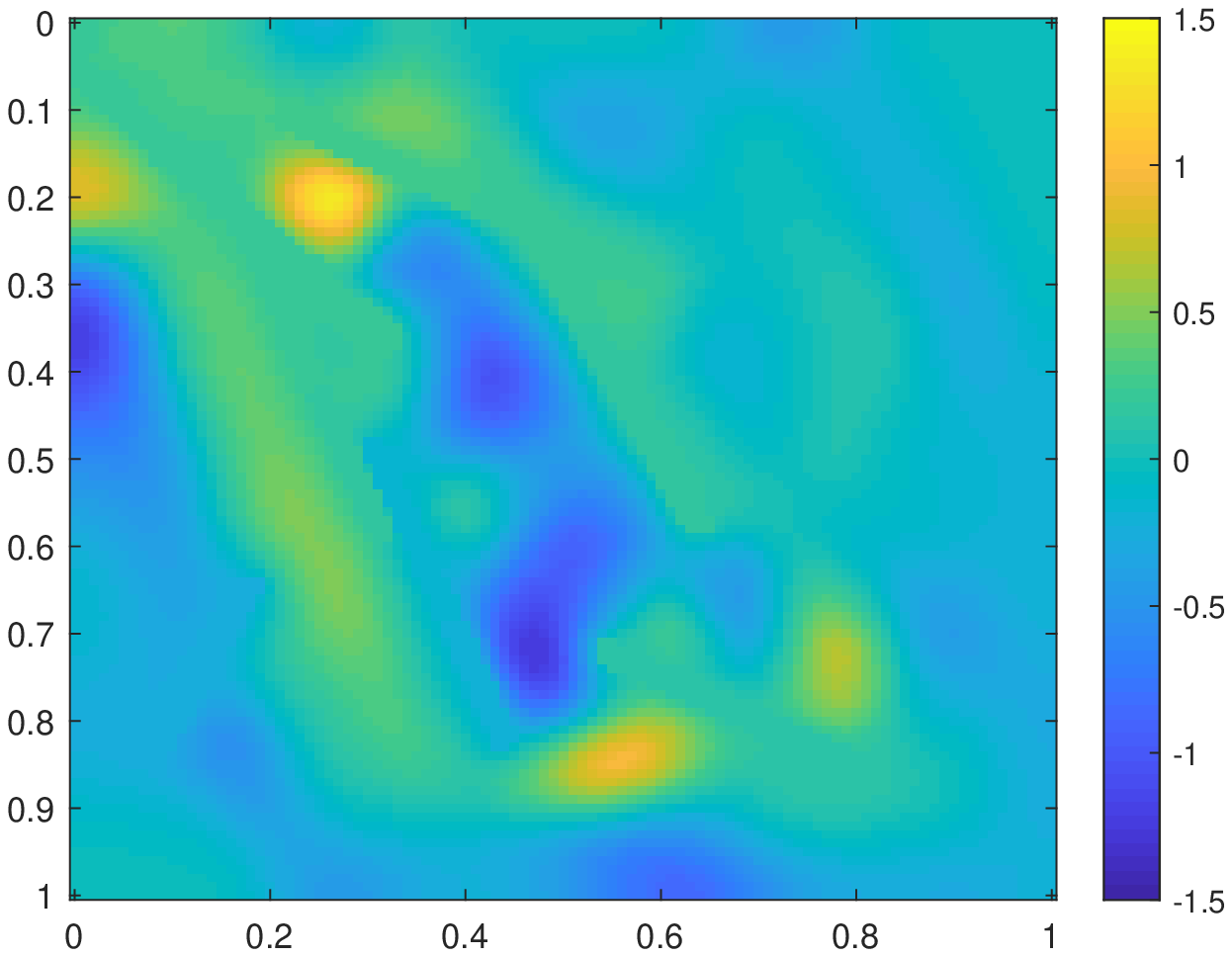}
\caption{Snapshot at $T=0.6$. op-left: reference solution. Top-Right: Implicit CEM-GMsFEM solution. Top-left: Proposed splitting method with additional basis functions. Top-right: Implicit CEM-GMsFEM with additional basis.}
\label{fig:case2low2}
\end{figure}

\begin{figure}[H]
\centering
\includegraphics[scale=0.4]{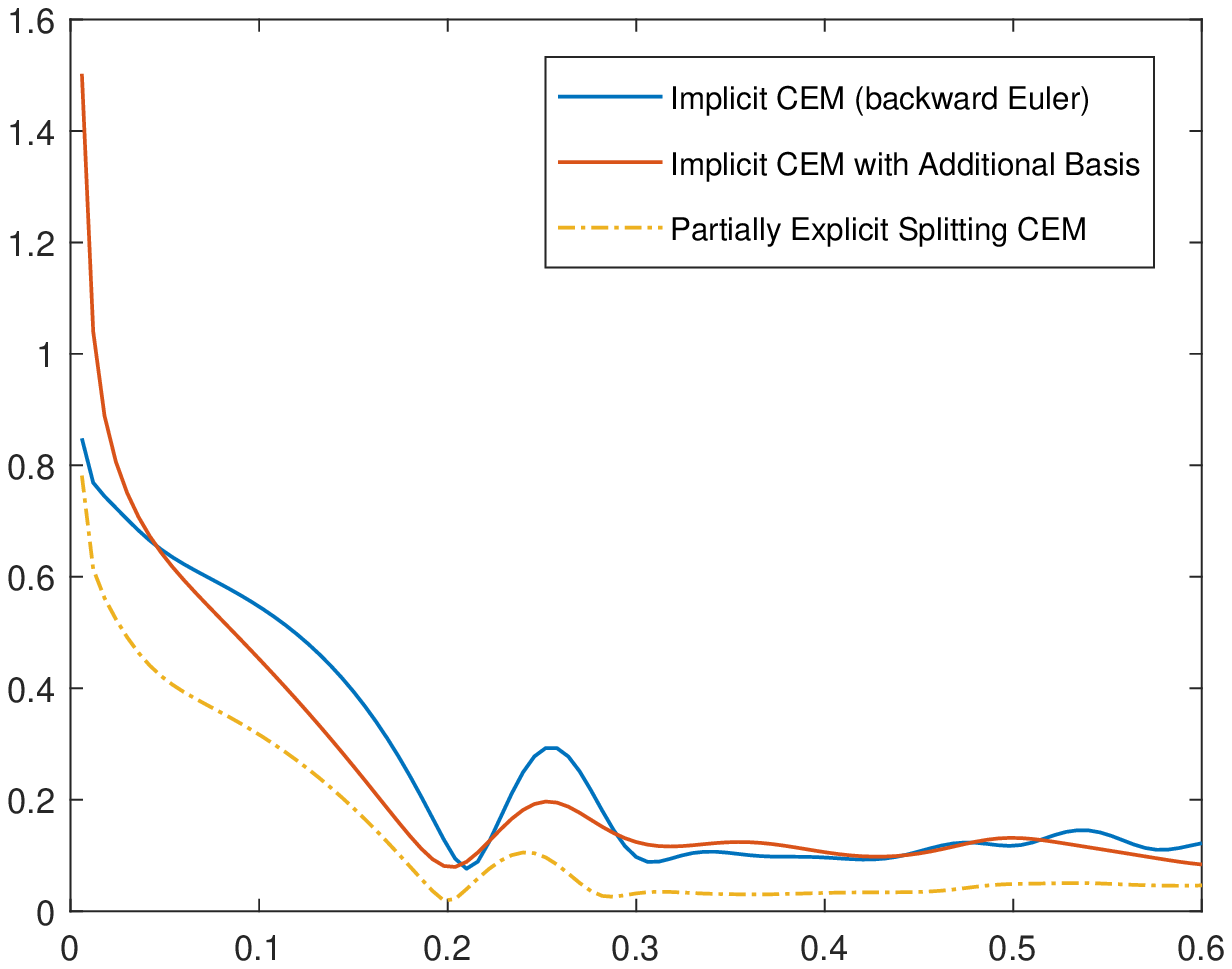} \includegraphics[scale=0.4]{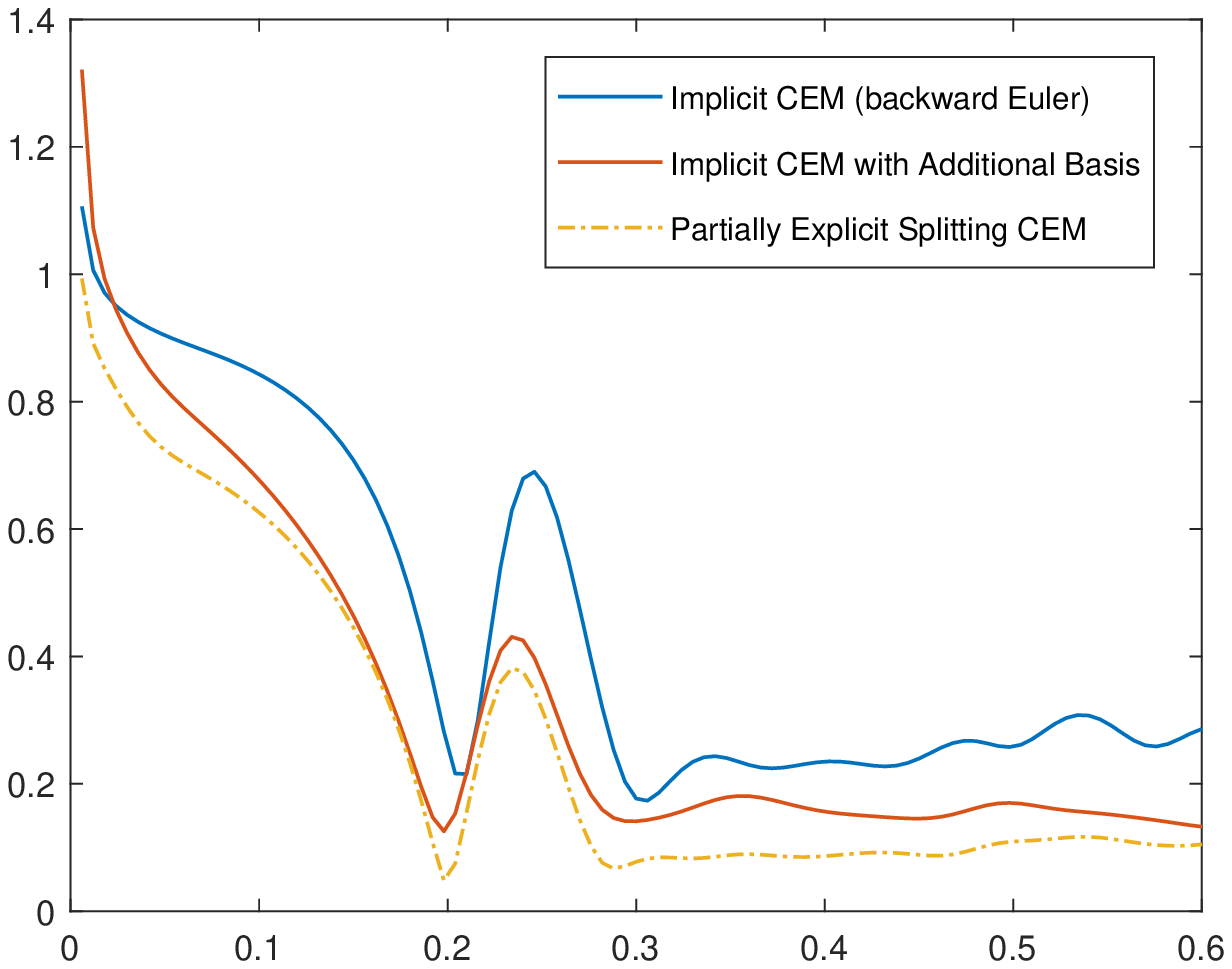}
\caption{Second type of $V_{2,H}$ (CEM Dof: $300$, $V_{2,H}$ Dof: $300$).
Left: $L_{2}$ error. Right: Energy error.}
\label{fig:case2low_error1}
\end{figure}

\begin{figure}[H]
\centering

\includegraphics[scale=0.4]{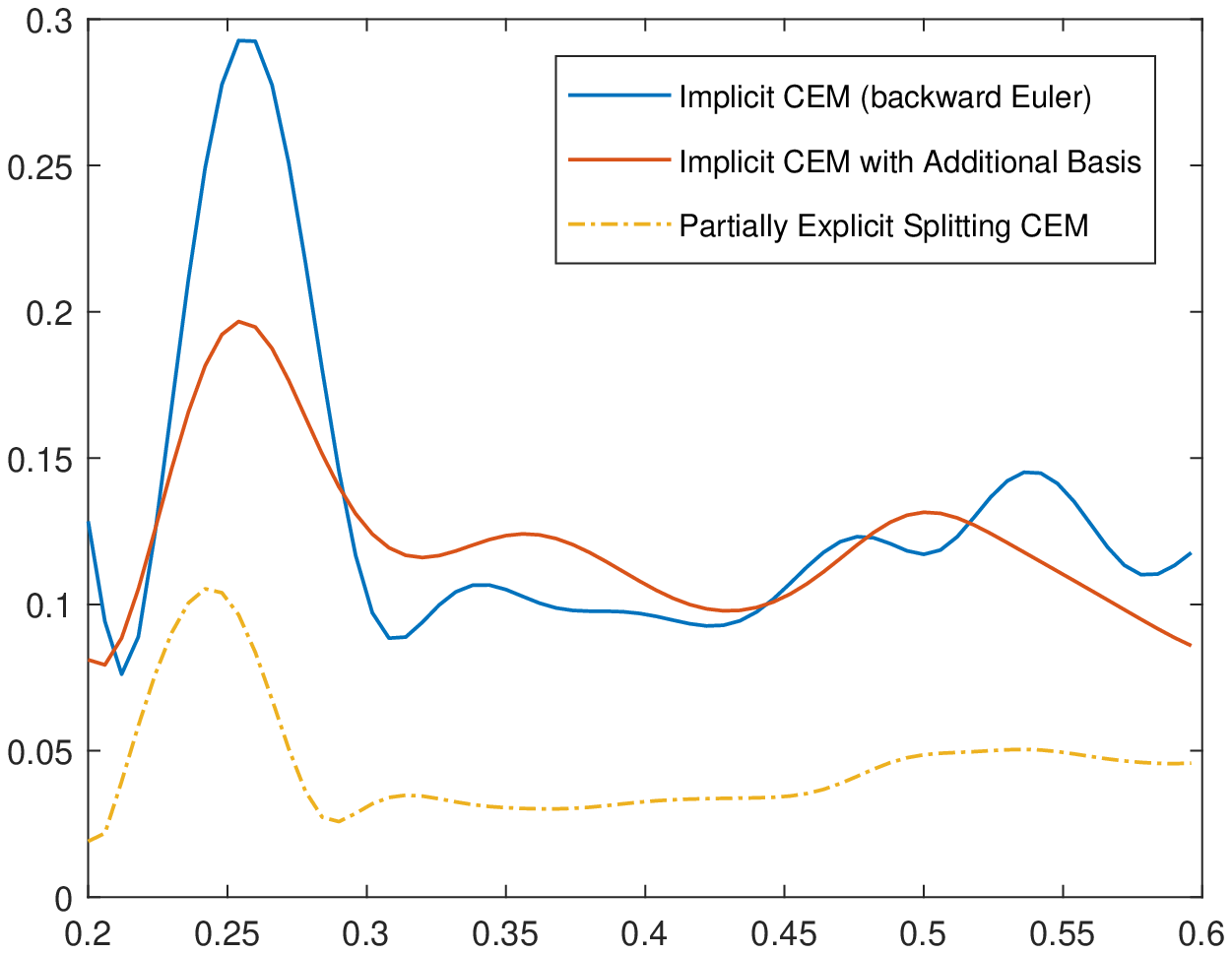} \includegraphics[scale=0.4]{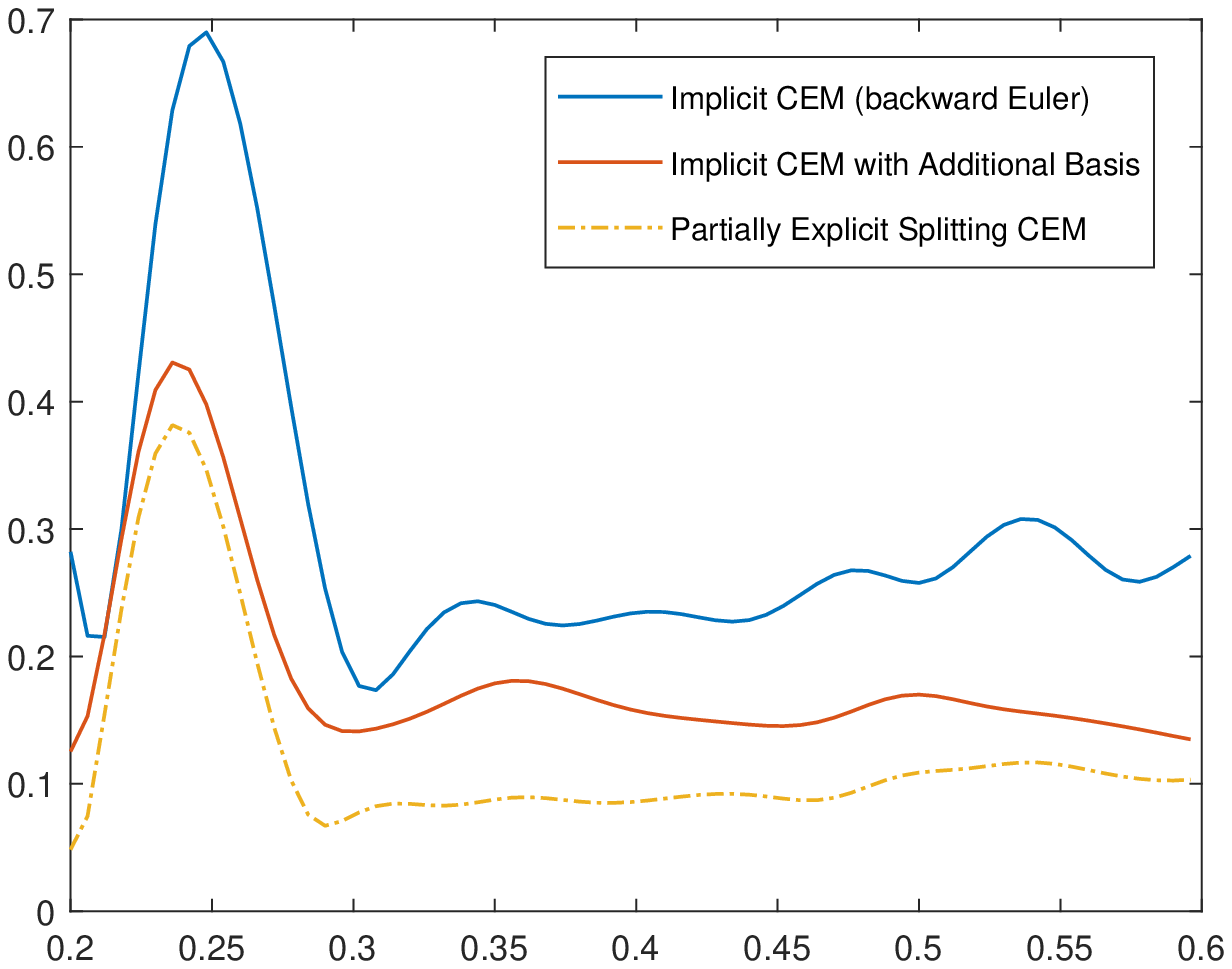}
\caption{The error in the time interval from $0.2$ to $0.6$. Second type of $V_{2,H}$ is used (CEM Dof: $300$, $V_{2,H}$ Dof: $300$).
Left: $L_{2}$ error. Right: Energy error.}
\label{fig:case2low_error2}
\end{figure}

\subsection{Case 2 and $f_0=1$}

In our final numerical example, we consider the case with the conductivity
$\kappa_2(x)$ and higher frequency. 
In this case, the error is larger as expected and additional basis functions
in CEM-GMsFEM provide improved solution.
As in the previous cases, we first depict
the reference solution (top-left), the solution computed using CEM-GMsFEM 
(top-right)
(without additional basis functions), the solution 
computed using additional basis functions with implicit 
method (bottom-left), and 
the solution computed using additional basis functions 
using partially explicit method (bottom-right) in Figure \ref{fig:case2high}.
We observe that 
there is a little difference between 
the implicit solution that uses additional basis functions
and the partially explicit solution that treats additional degrees
of freedom explicitly. 
 In Figure \ref{fig:case2high}, we plot the solution at the time
$T=0.3$ and we can make similar observations.
 In Figure \ref{fig:case2high2}, we plot the solution and its approximations (as in Figure \ref{fig:case2high}) for $T=0.6$. The errors are plotted 
in Figure \ref{fig:case2high_error1} and  Figure \ref{fig:case2high_error2}, where we plot both $L_2$ and energy errors. In Figure \ref{fig:case2high_error2}, we zoom the error graph into smaller time interval.  Our main observation is that our proposed approach that treats additional degrees of freedom explicitly provides a similar result as the approach where all degrees of freedom are treated implicitly.

\begin{figure}[H]
\centering
\includegraphics[scale=0.35]{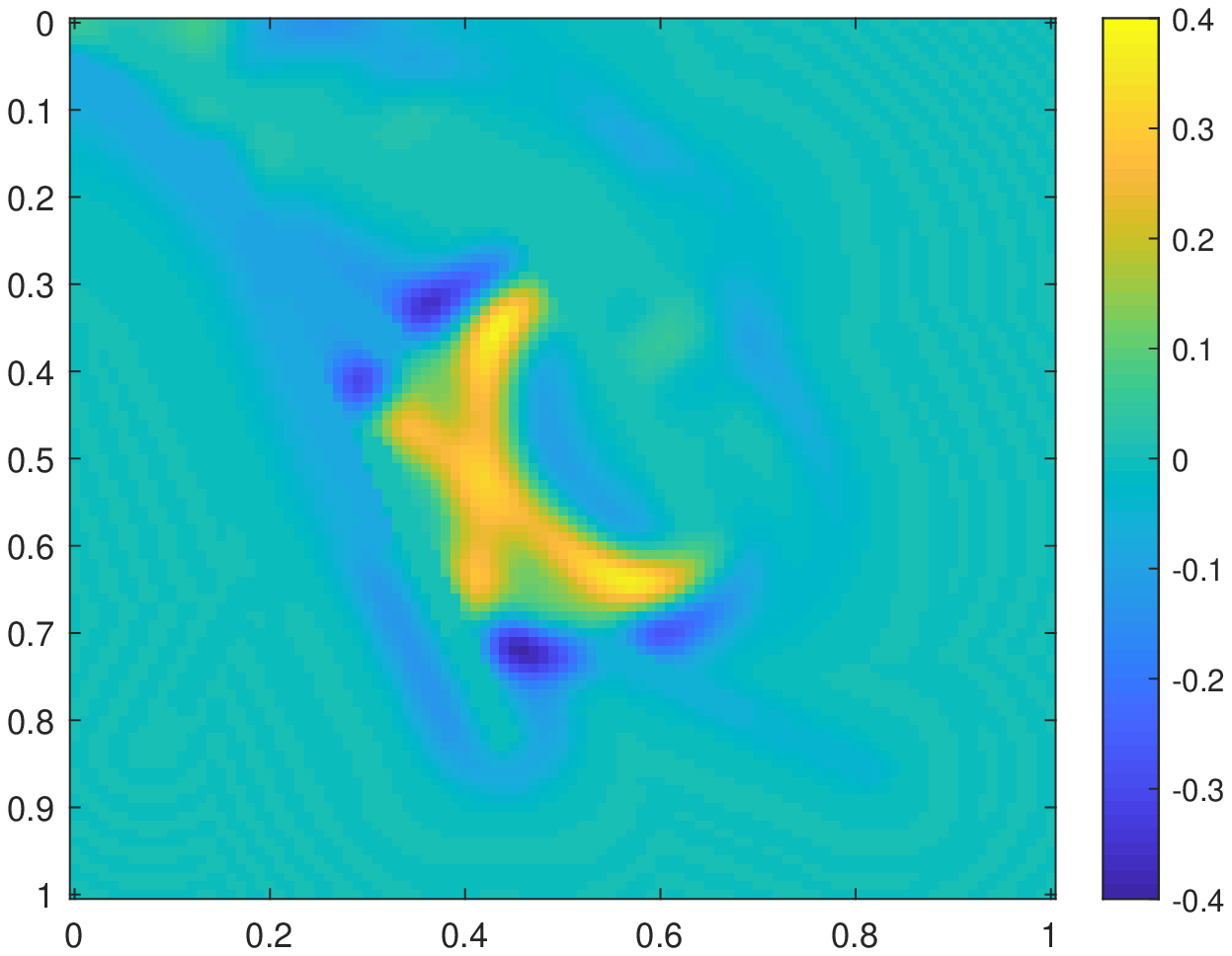} 
\includegraphics[scale=0.35]{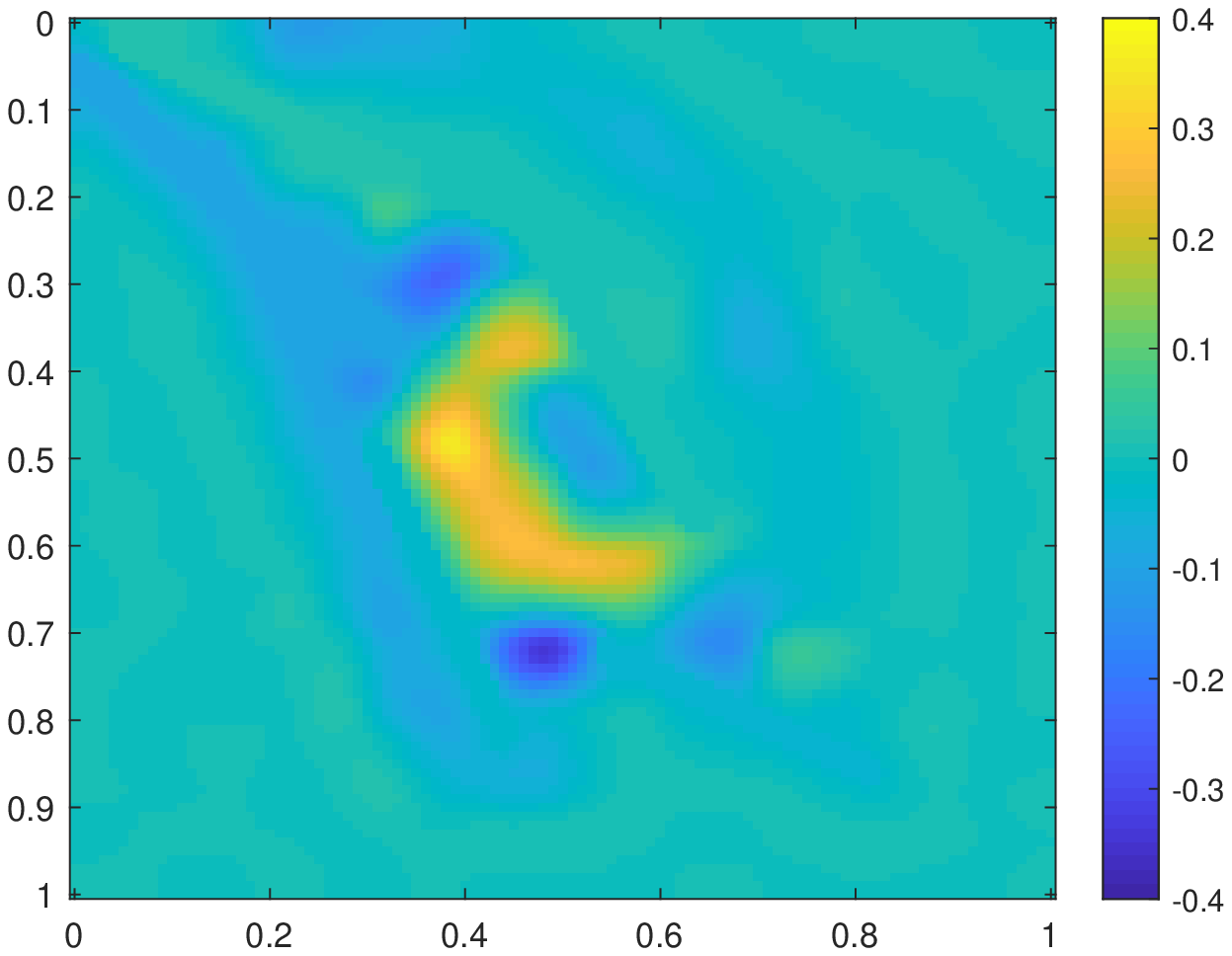}

\includegraphics[scale=0.35]{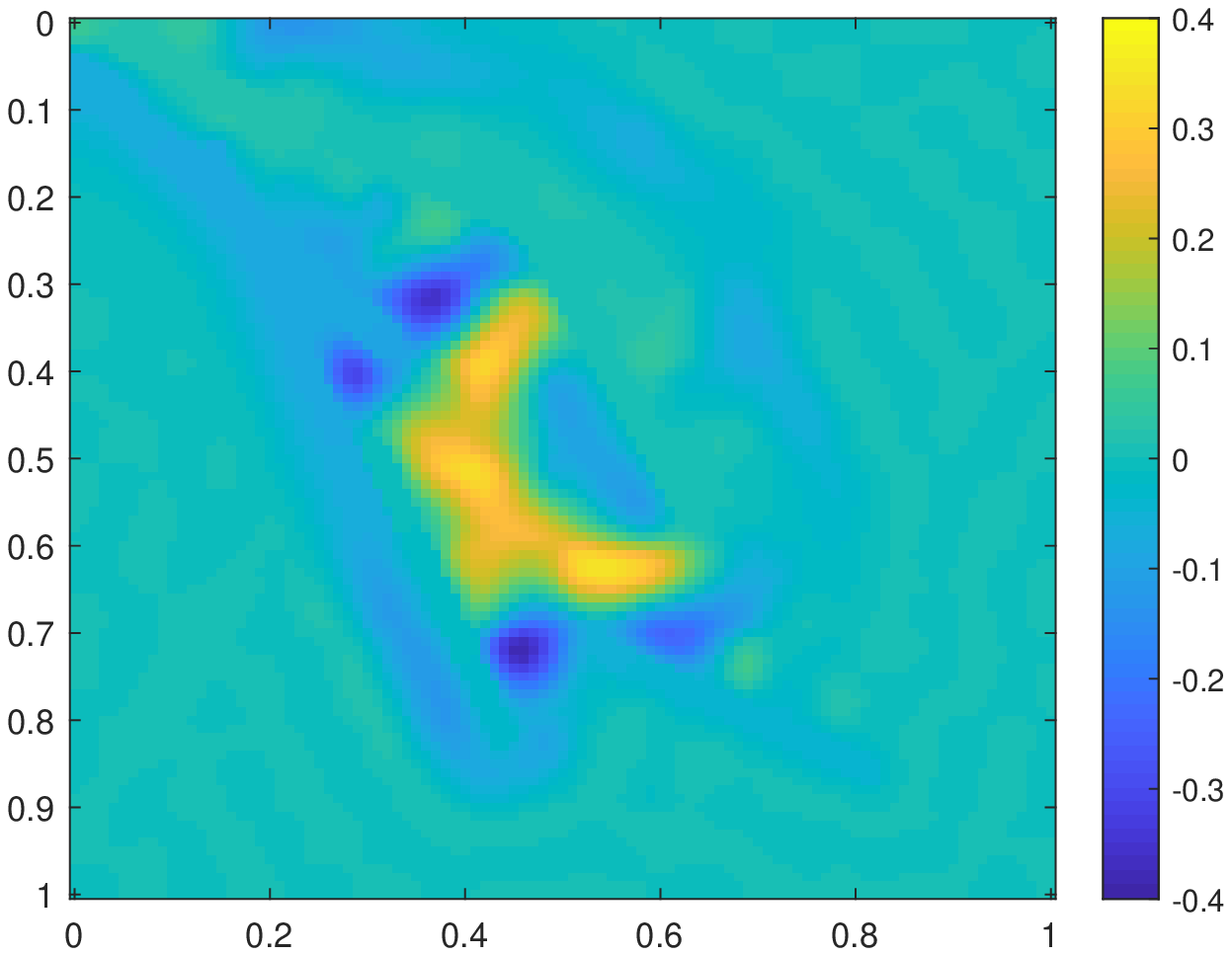} 
\includegraphics[scale=0.35]{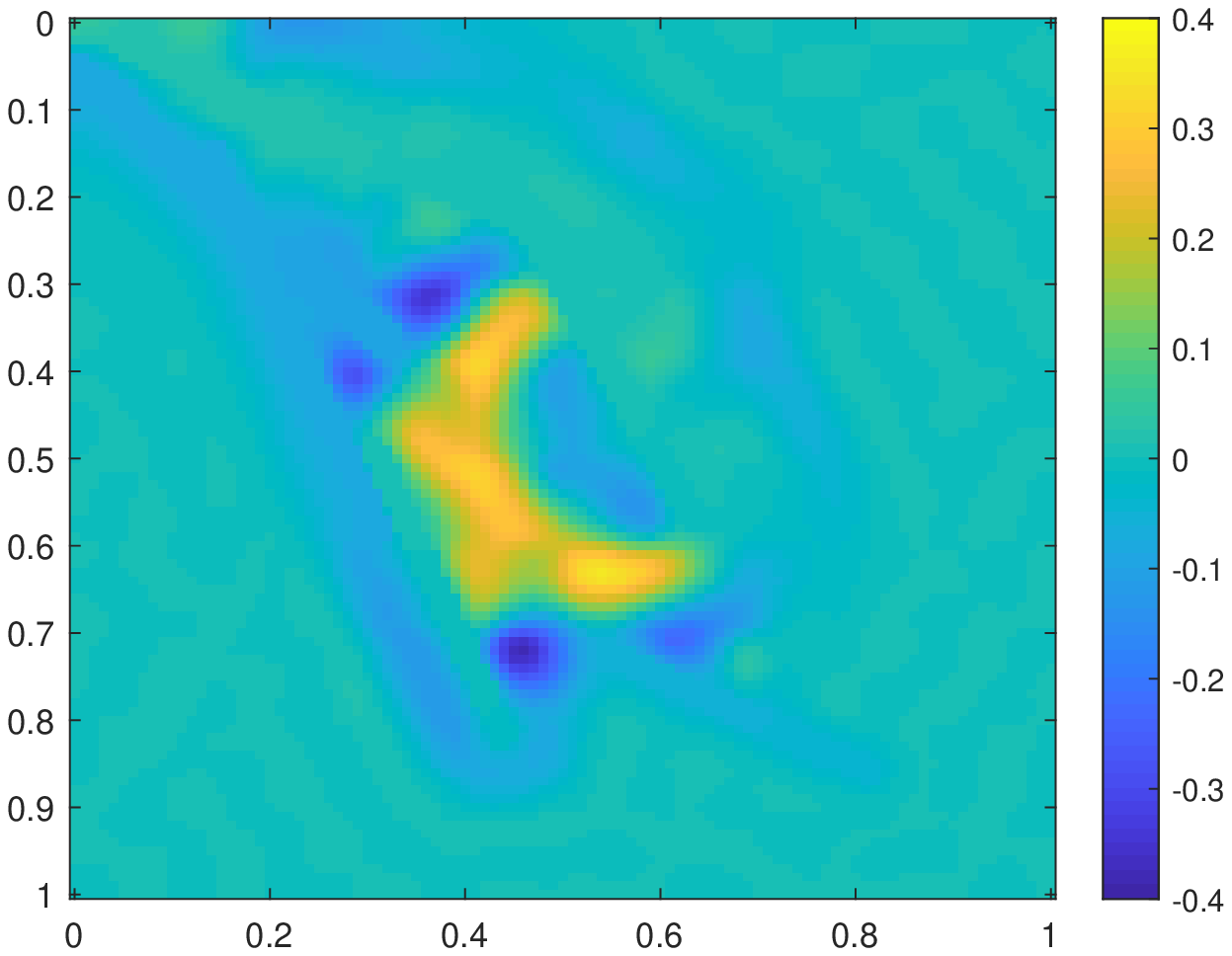} 
\caption{Snapshot at $T=0.3$. Top-left: reference solution. Top-Right: Implicit CEM-GMsFEM solution. Top-left: Proposed splitting method with additional basis functions. Top-right: Implicit CEM-GMsFEM with additional basis.}
\label{fig:case2high}
\end{figure}

\begin{figure}[H]
\centering
\includegraphics[scale=0.35]{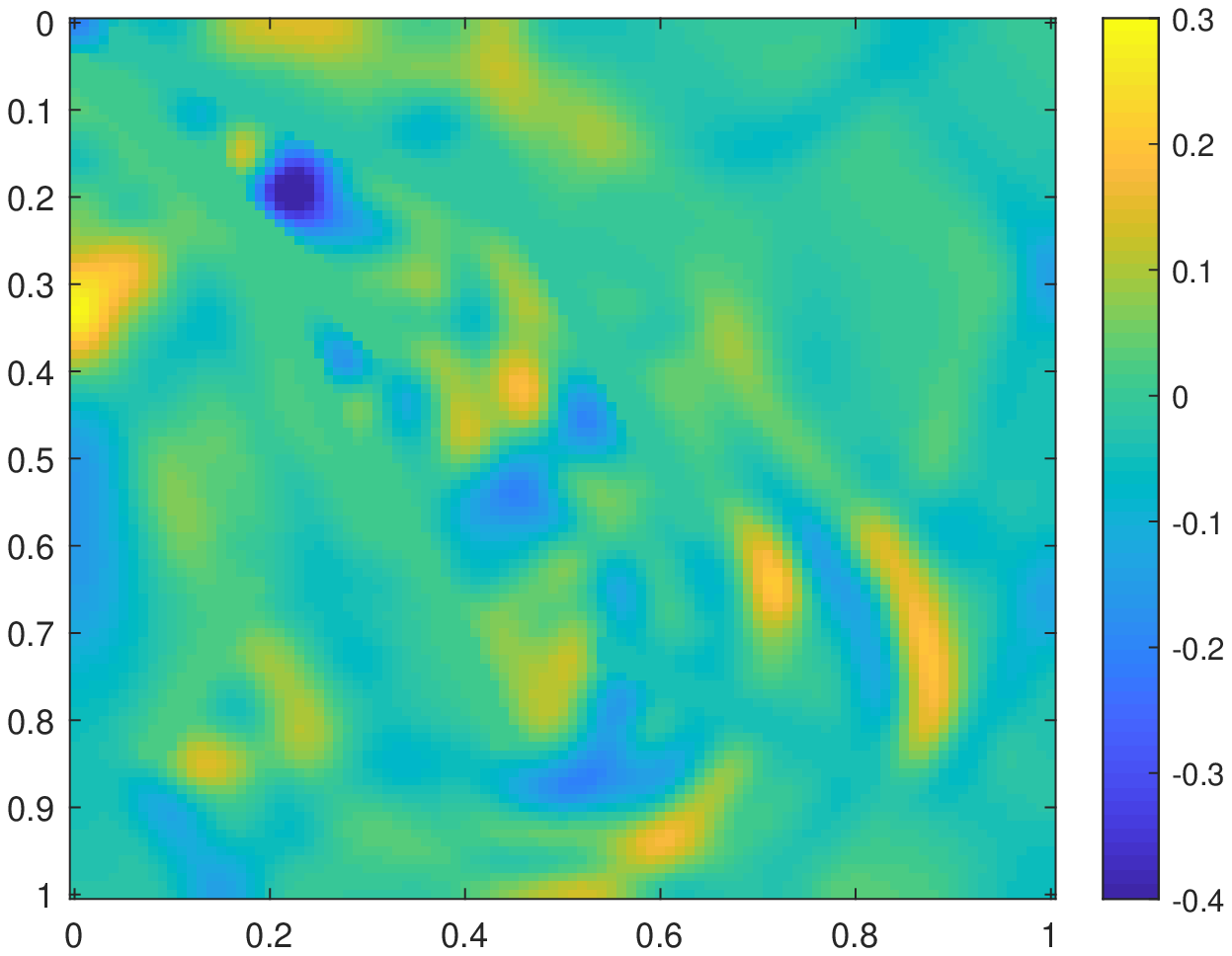} 
\includegraphics[scale=0.35]{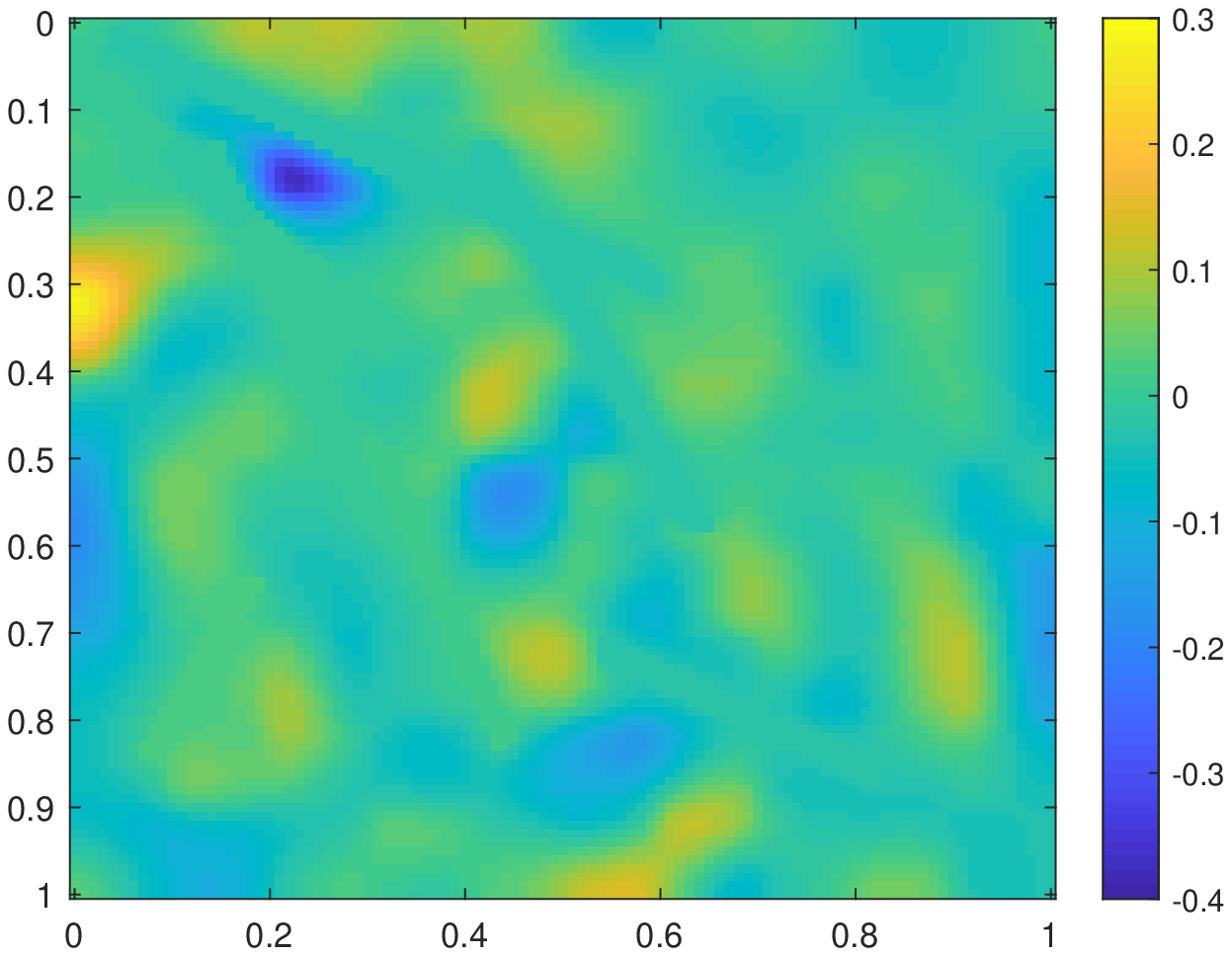}

\includegraphics[scale=0.35]{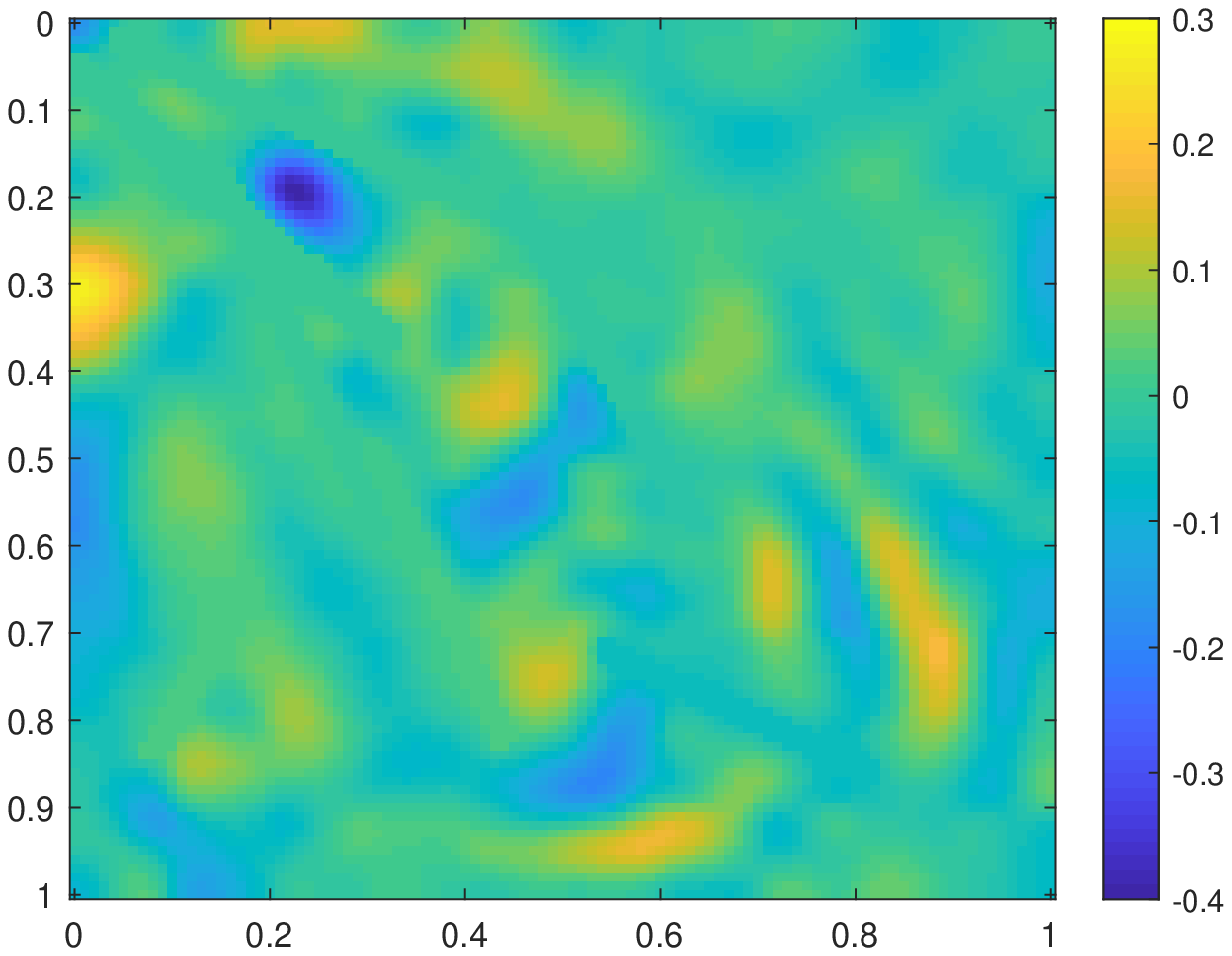} 
\includegraphics[scale=0.35]{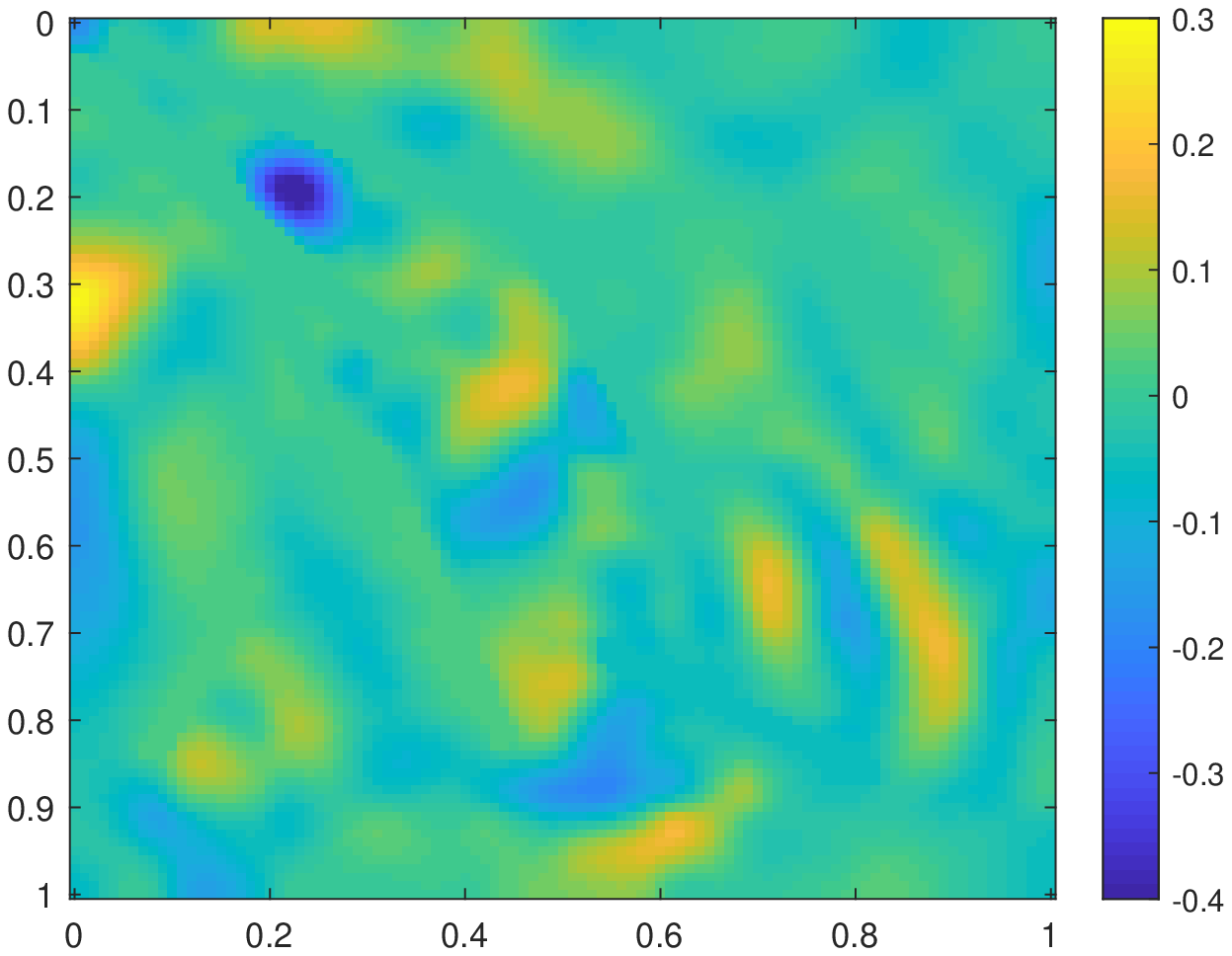}
\caption{Snapshot at $T=0.6$. op-left: reference solution. Top-Right: Implicit CEM-GMsFEM solution. Top-left: Proposed splitting method with additional basis functions. Top-right: Implicit CEM-GMsFEM with additional basis.}
\label{fig:case2high2}
\end{figure}

\begin{figure}[H]
\centering
\includegraphics[scale=0.4]{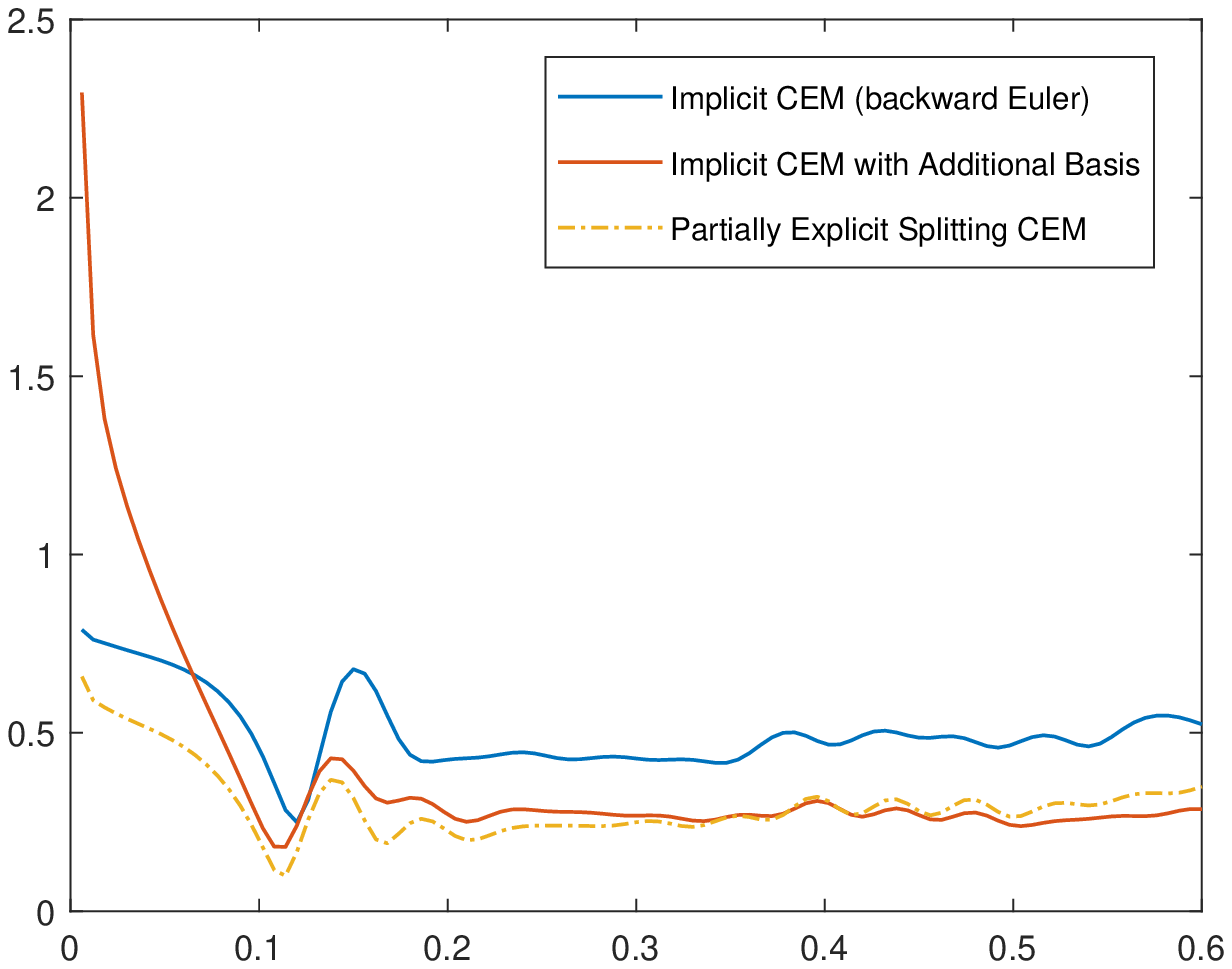} \includegraphics[scale=0.4]{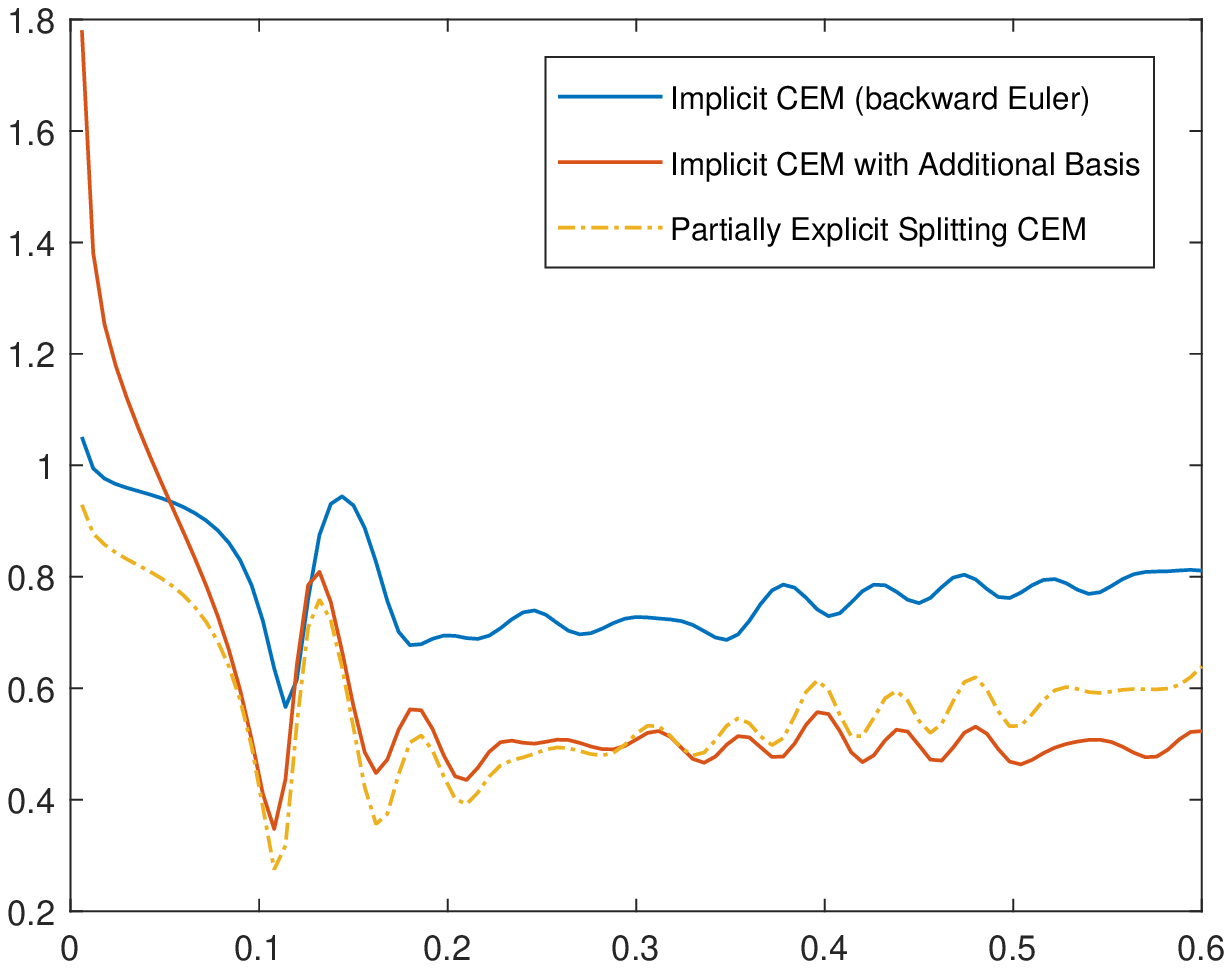}
\caption{Second type of $V_{2,H}$ (CEM Dof: $300$, $V_{2,H}$ Dof: $300$).
Left: $L_{2}$ error. Right: Energy error.}
\label{fig:case2high_error1}
\end{figure}

\begin{figure}[H]
\centering
\includegraphics[scale=0.4]{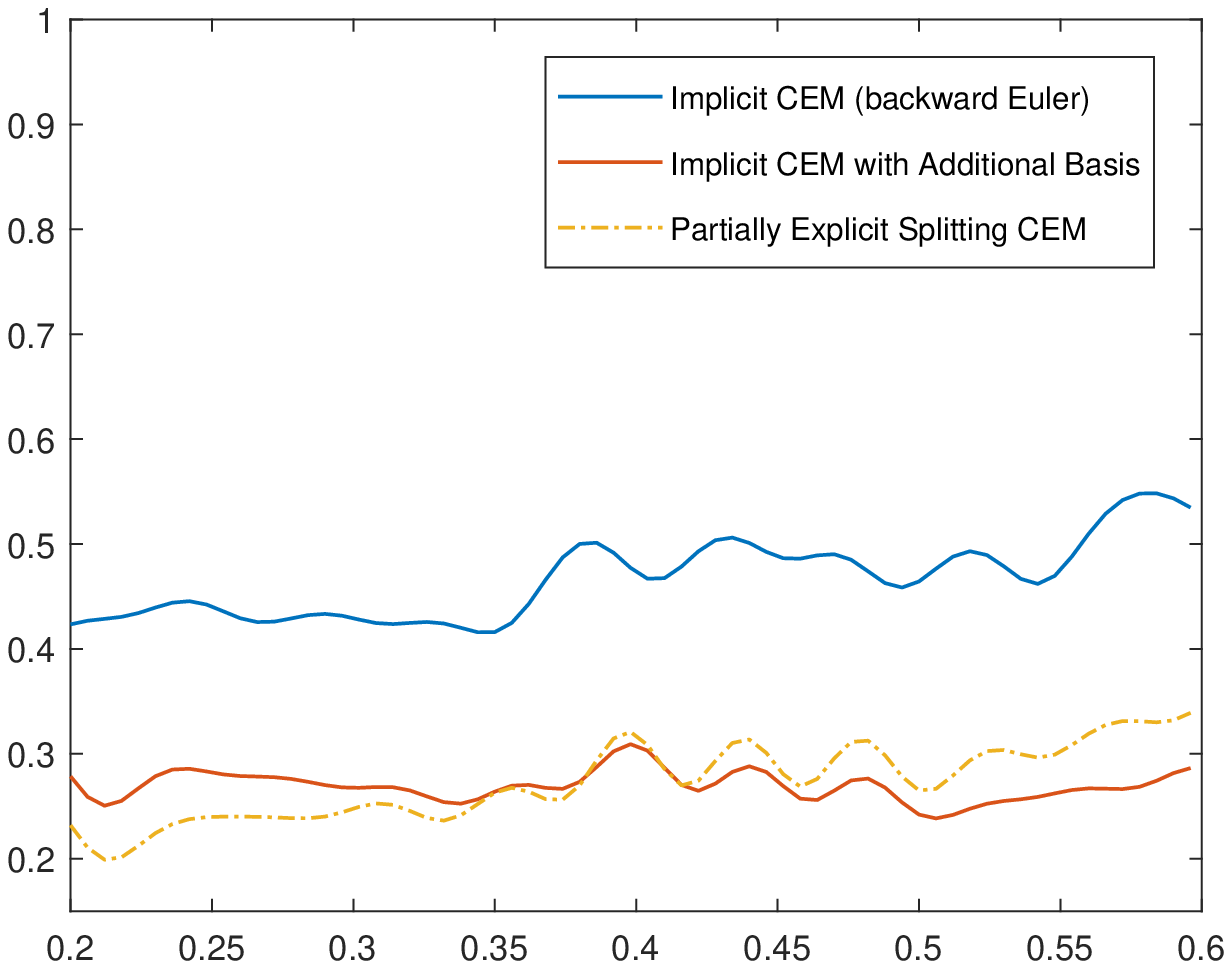} \includegraphics[scale=0.4]{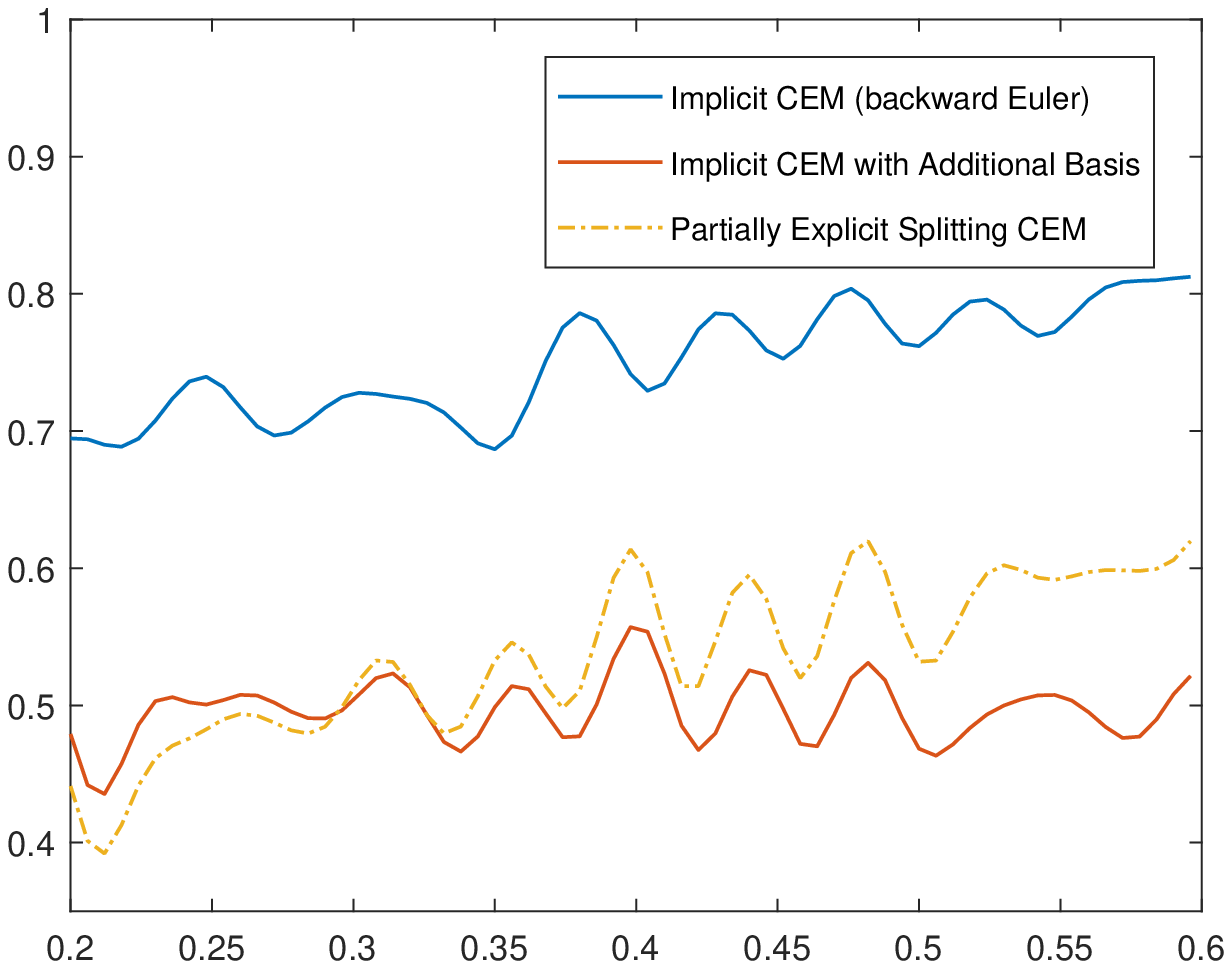}
\caption{The error in the time interval from $0.2$ to $0.6$. Second type of $V_{2,H}$ is used (CEM Dof: $300$, $V_{2,H}$ Dof: $300$).
Left: $L_{2}$ error. Right: Energy error.}
\label{fig:case2high_error2}
\end{figure}

\subsection{Example of a mass lumping}

The proposed methods are still coupled via mass matrix. 
One way to remove this coupling is via mass lumping methods
\cite{cohen2001higher}.
There are many ways to do mass lumping, which can be applied in this paper.
We have tried some mass lumping schemes; 
however, their performance were not optimal.
We believe for mass lumping other discretizations 
(mixed or Discontinuous Galerkin, e.g., \cite{cheung2020explicit}) 
may be more effective.
Here, we consider one example with mass lumping.

In this subsection, we will introduce a way for mass laamping. We
will consider the auxiliary basis functions of $V_{H,1}$ are defined
as following: for each coarse element $K_{i}\in\mathcal{T}_{H}$,
we consider
\[
\psi_{j}^{(i)}=I_{K_{j}^{(i)}}(x)
\]
 where $I_{K_{j}^{(i)}}$ is the characteristic function of $K_{j}^{(i)}\subset K_{i}.$
$K_{j}^{(i)}$ is defined as $K_{1}^{(i)}$ is the region with low
wave speed and $K_{2}^{(i)}$ is the region with high wave speed.
For example, we consider 
\begin{align*}
K_{1}^{(i)} & =\{x\in K_{i}|\;\kappa(x)\leq1\}\\
K_{2}^{(i)} & =\{x\in K_{i}|\;\kappa(x)>1\}.
\end{align*}
For each coarse element $K_{i}$, we can solve the eigenvalue problem
to obtain the auxiliary basis for $V_{2,H}$. We can find $(\xi_{j}^{(i)},\gamma_{j}^{(i)})\in(V(K_{i})\cap\tilde{V})\times\mathbb{R}$,
\begin{align*}
\int_{\omega_{i}}\kappa\nabla\xi_{j}^{(i)}\cdot\nabla v & =\gamma_{j}^{(i)}\int_{\omega_{i}}\xi_{j}^{(i)}v\;\forall v\in V(K_{i})\cap\tilde{V}
\end{align*}
where $\tilde{V}=\{v\in V|\;(v,\psi_{j}^{(i)})=0\;\forall i,j\}.$
The auxiliary space $V_{aux,1}$ and $V_{aux,2}$ are defined as 
\begin{align*}
V_{aux,1} & =\text{span}_{i,j}\{\psi_{j}^{(i)}\},\\
V_{aux,2} & =\text{span}_{i,j}\{\xi_{j}^{(i)}\}.
\end{align*}

The multiscale basis functions $\phi_{j,1}^{(i)}$ of $V_{H,1}$ are
obtained by finding $(\phi_{j,1}^{(i)},\mu_{j,1}^{(i)})\in V_0(K_i^+) \times(V_{aux,1}+V_{aux,2})$
\begin{align*}
a(\phi_{j,1}^{(i)},v)+(\mu_{j,1}^{(i)},v) & =0\;\forall v\in V_0(K_i^+)\\
(\phi_{j,1}^{(i)},\psi_{k}^{(l)}) & =\delta_{il}\delta_{jk}\\
(\phi_{j,1}^{(i)},\xi_{k}^{(l)}) & =0.
\end{align*}
Similarly, the multiscale basis functions $\phi_{j,2}^{(i)}$ of $V_{H,2}$
are obtained by finding $(\phi_{j,2}^{(i)},\mu_{j,2}^{(i)})\in V_0(K_i^+)\times(V_{aux,1}+V_{aux,2})$
\begin{align*}
a(\phi_{j,2}^{(i)},v)+(\mu_{j,2}^{(i)},v) & =0\;\forall v\in V_0(K_i^+)\\
(\phi_{j,2}^{(i)},\psi_{k}^{(l)}) & =0\\
(\phi_{j,2}^{(i)},\xi_{k}^{(l)}) & =\delta_{il}\delta_{jk}.
\end{align*}
The multiscale finite element spaces $V_{H,1}$ and $V_{H,2}$
are defined as 
\begin{align*}
V_{H,1} & =\text{span}_{i,j}\{\phi_{j,1}^{(i)}\},\\
V_{H,2} & =\text{span}_{i,j}\{\phi_{j,2}^{(i)}\}.
\end{align*}
For $u_{H}\in V_{H,1}+V_{H,2}$, we have $u_{H}=\sum_{i,j}u_{j,1}^{(i)}\phi_{j,1}^{(i)}+\sum_{i,j}u_{j,2}^{(i)}\phi_{j,2}^{(i)}$and
\[
a(u,v)=-(\sum_{i,j}u_{j,1}^{(i)}\mu_{j,1}^{(i)},v)-(\sum_{i,j}u_{j,2}^{(i)}\mu_{j,2}^{(i)},v)\;\forall v\in V_{H,1}+V_{H,2}.
\]
Thus, we can consider the weak formulation of $u_{H,tt}=\nabla\cdot(\kappa\nabla u_{H})+f$
as
\[
(\sum_{i,j}u_{j,1}^{(i)}\phi_{j,1}^{(i)}+\sum_{i,j}u_{j,2}^{(i)}\phi_{j,2}^{(i)},w)=(\sum_{i,j}u_{j,1}^{(i)}\mu_{j,1}^{(i)}+\sum_{i,j}u_{j,2}^{(i)}\mu_{j,2}^{(i)}+f,w)\;\forall w\in W_{H}
\]
for some testing space $W_{H}$. We can consider the $W_{H}$ to be
$(V_{aux,1}+V_{aux,2})$ and we have 
\begin{align*}
(\sum_{i,j}(u_{j,1}^{(i)})_{tt}\phi_{j,1}^{(i)}+\sum_{i,j}(u_{j,2}^{(i)})_{tt}\phi_{j,2}^{(i)},\psi_{k}^{(l)}) & =(\sum_{i,j}u_{j,1}^{(i)}\mu_{j,1}^{(i)}+\sum_{i,j}u_{j,2}^{(i)}\mu_{j,2}^{(i)}+f,\psi_{k}^{(l)})\;\forall l,k\\
(\sum_{i,j}(u_{j,1}^{(i)})_{tt}\phi_{j,1}^{(i)}+\sum_{i,j}(u_{j,2}^{(i)})_{tt}\phi_{j,2}^{(i)},\xi_{k}^{(l)}) & =(\sum_{i,j}u_{j,1}^{(i)}\mu_{j,1}^{(i)}+\sum_{i,j}u_{j,2}^{(i)}\mu_{j,2}^{(i)}+f,\xi_{k}^{(l)})\;\forall l,k.
\end{align*}
Since 
\begin{align*}
(\phi_{j,2}^{(i)},\psi_{k}^{(l)}) & =0\\
(\phi_{j,2}^{(i)},\xi_{k}^{(l)}) & =\delta_{il}\delta_{jk},
\end{align*}
and 
\begin{align*}
(\phi_{j,1}^{(i)},\psi_{k}^{(l)}) & =\delta_{il}\delta_{jk}\\
(\phi_{j,1}^{(i)},\xi_{k}^{(l)}) & =0,
\end{align*}
we have 
\begin{align*}
(u_{k,1}^{(l)})_{tt} & =(\sum_{i,j}u_{j,1}^{(i)}\mu_{j,1}^{(i)}+\sum_{i,j}u_{j,2}^{(i)}\mu_{j,2}^{(i)}+f,\psi_{k}^{(l)})\;\forall l,k\\
(u_{k,2}^{(l)})_{tt} & =(\sum_{i,j}u_{j,1}^{(i)}\mu_{j,1}^{(i)}+\sum_{i,j}u_{j,2}^{(i)}\mu_{j,2}^{(i)}+f,\xi_{k}^{(l)})\;\forall l,k.
\end{align*}
Since 
\begin{align*}
(\sum_{i,j}u_{j,1}^{(i)}\mu_{j,1}^{(i)}+\sum_{i,j}u_{j,2}^{(i)}\mu_{j,2}^{(i)}+f,v) & =-a(u_{H},v),
\end{align*}
and 
\begin{align*}
(\sum_{i,j}u_{j,1}^{(i)}\mu_{j,1}^{(i)}+\sum_{i,j}u_{j,2}^{(i)}\mu_{j,2}^{(i)},\psi_{k}^{(l)}) & =(\sum_{i,j}u_{j,1}^{(i)}\mu_{j,1}^{(i)}+\sum_{i,j}u_{j,2}^{(i)}\mu_{j,2}^{(i)},\phi_{k,1}^{(l)})=-a(u_{H},\phi_{k,1}^{(l)}),\\
(\sum_{i,j}u_{j,1}^{(i)}\mu_{j,1}^{(i)}+\sum_{i,j}u_{j,2}^{(i)}\mu_{j,2}^{(i)},\xi_{k}^{(l)}) & =(\sum_{i,j}u_{j,1}^{(i)}\mu_{j,1}^{(i)}+\sum_{i,j}u_{j,2}^{(i)}\mu_{j,2}^{(i)},\phi_{k,2}^{(l)})=-a(u_{H},\phi_{k,2}^{(l)}),
\end{align*}
We have 
\begin{align*}
(u_{k,1}^{(l)})_{tt}+\sum_{i,j}u_{j,1}^{(i)}a(\phi_{j,1}^{(i)},\phi_{k,1}^{(l)})+\sum_{i,j}u_{j,2}^{(i)}a(\phi_{j,2}^{(i)},\phi_{k,1}^{(l)}) & =(f,\psi_{k}^{(l)})\;\forall l,k\\
(u_{k,2}^{(l)})_{tt}+\sum_{i,j}u_{j,1}^{(i)}a(\phi_{j,1}^{(i)},\phi_{k,2}^{(l)})+\sum_{i,j}u_{j,2}^{(i)}a(\phi_{j,2}^{(i)},\phi_{k,2}^{(l)}) & =(f,\xi_{k}^{(l)})\;\forall l,k.
\end{align*}
Using discretization, we can obtain a diagonal mass matrix. We remark
that this can be considered 
as a mass lumping trick for the system (\ref{eq:split12}).
Above, we derived this for global basis functions; 
however, the calculations can be 
localized.
Instead of using the $L_2$ inner product for the mass matrix, we can use $(\pi(u),\pi(v))$ to obtain an approximated mass matrix where $\pi$ is the $L_2$ projection operator from $V_H$ to $V_{aux,1}+V_{aux,2}$. We have used this localized projection idea in our numerical simulations.

Following the idea of the CEM-GMsFEM, insteading of using solving a global problem to obtain the global basis functions, we can solve the problem in an oversampling domain to obtain the CEM basis functions. Since the CEM basis functions are exponentially converging to the global basis functions, we can use the inner product  $(\pi(u),\pi(v))$ for mass lumping of this CEM space.

We will show a numerical example for example case 2 with $f_0=1/2$.

\begin{figure}[H]
\centering
\includegraphics[scale=0.4]{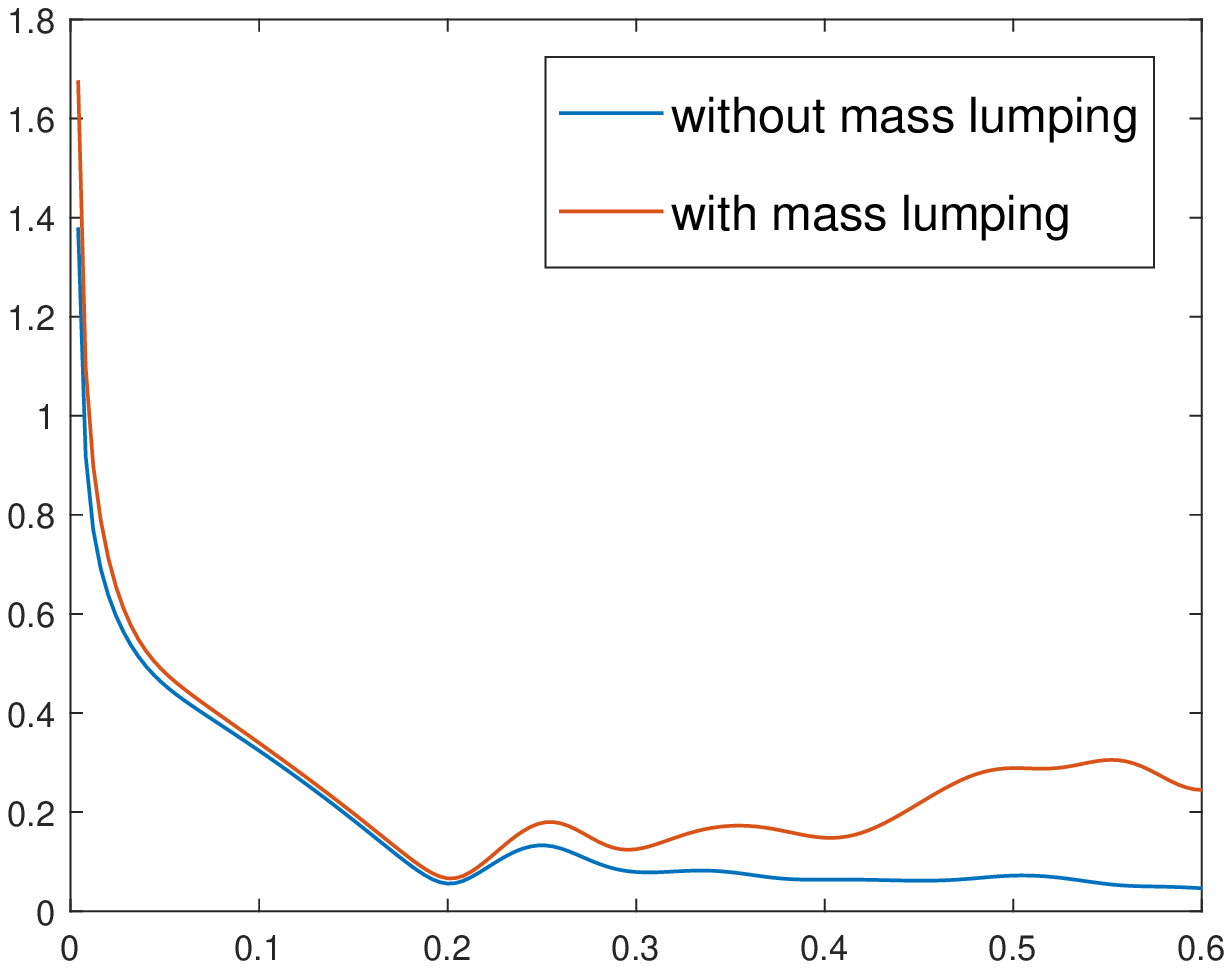} \includegraphics[scale=0.4]{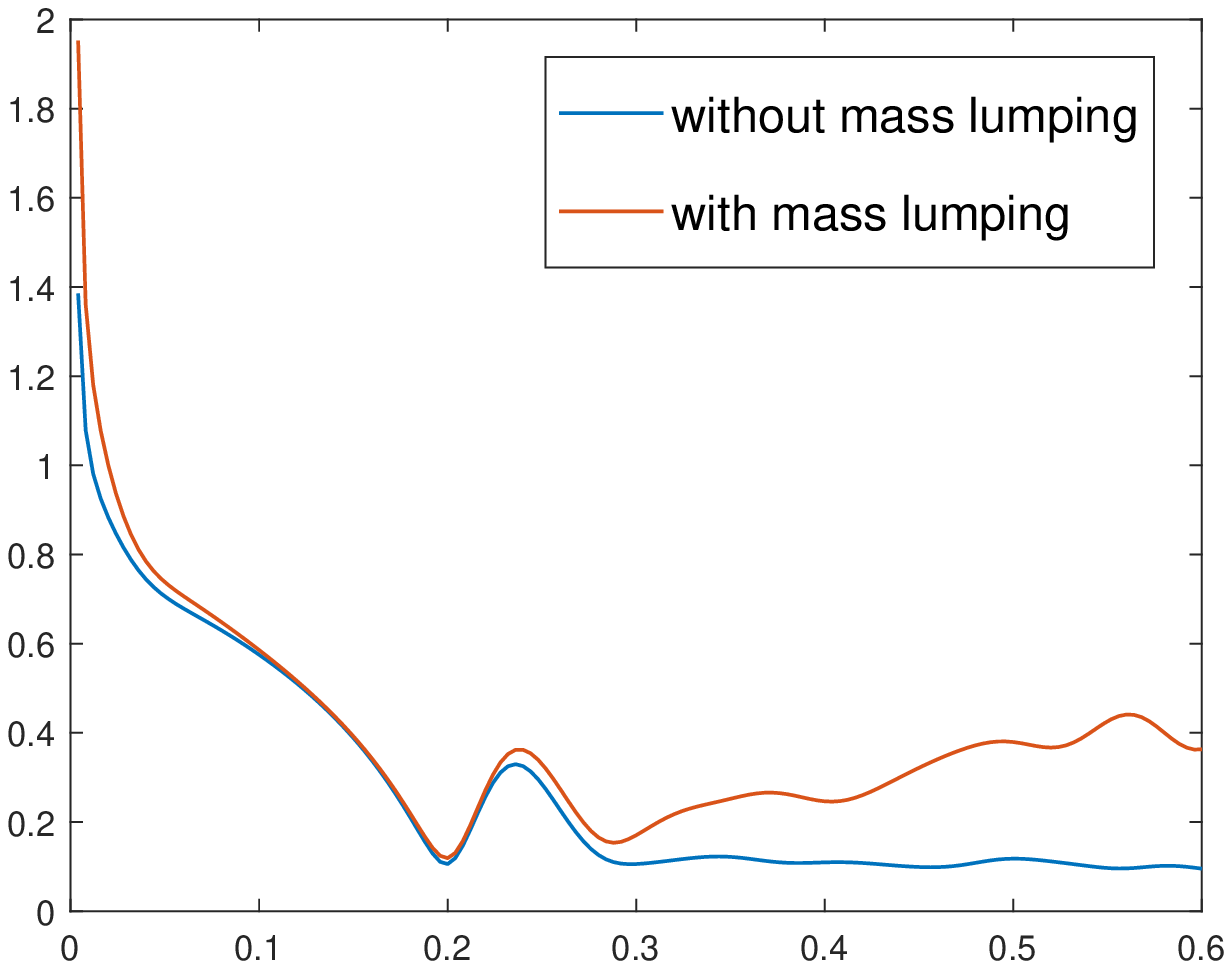}
\caption{Mass lumping result for third type of space and $\tau= 0.004$. (CEM Dof: $127$, $V_{2,H}$ Dof: $500$).
Left: $L_{2}$ error , Right: Energy error}
\end{figure}

\begin{figure}[H]
\centering

\includegraphics[scale=0.4]{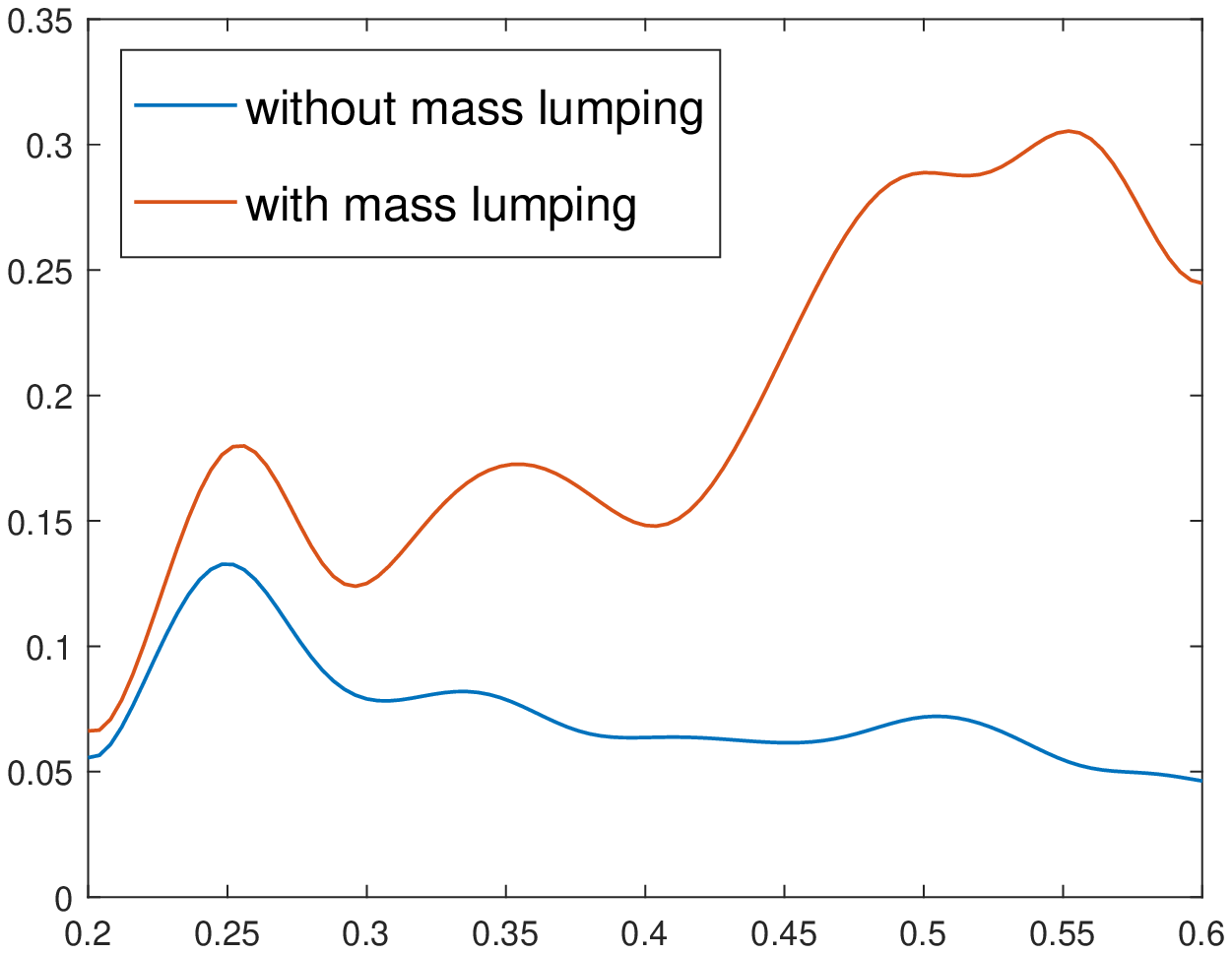} \includegraphics[scale=0.4]{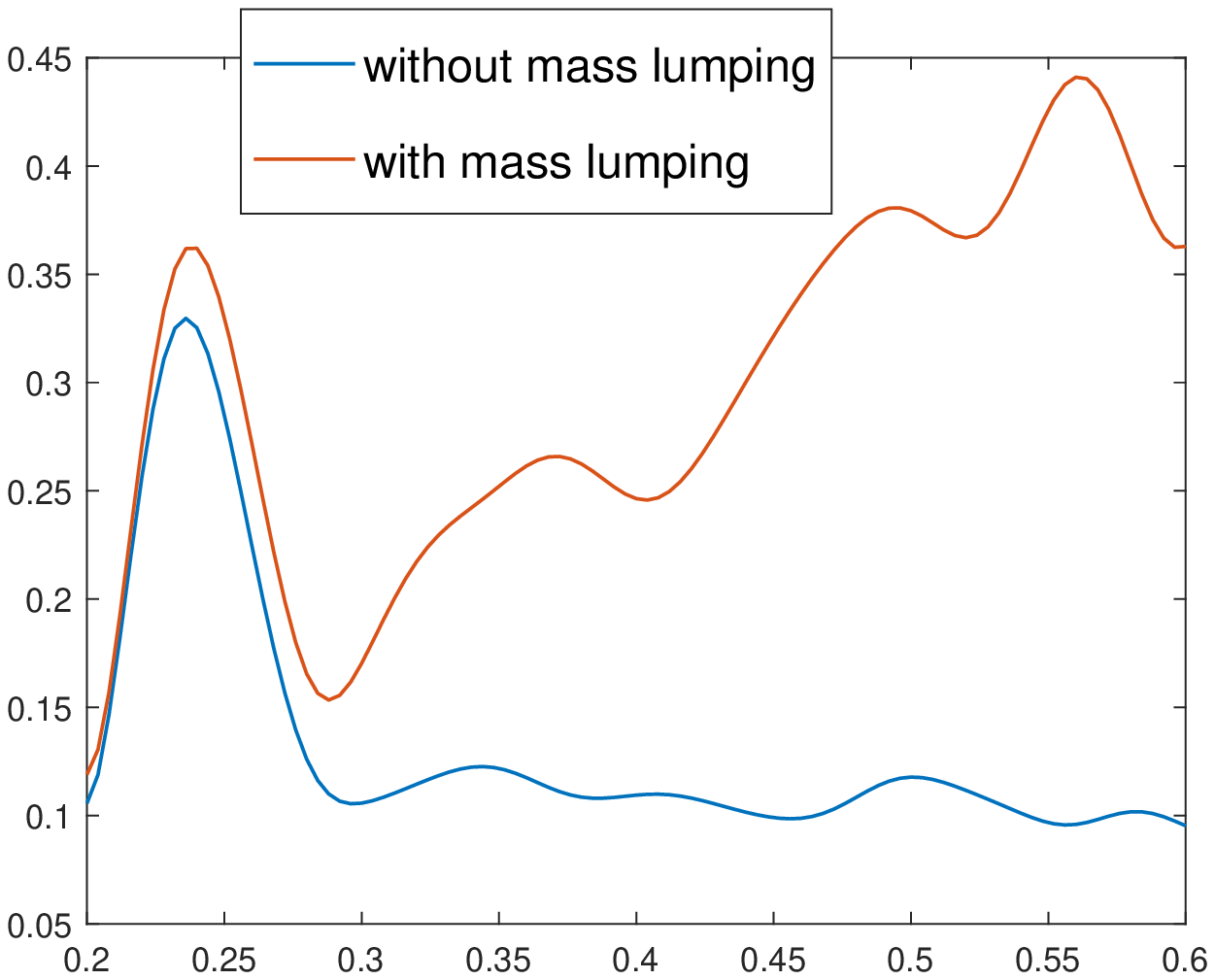}

\caption{Mass lumping result for third type of space and $\tau= 0.004$. Time from $.2$ to $0.6$. (CEM Dof: $127$, $V_{2,H}$ Dof: $500$).
Left: $L_{2}$ error , Right: Energy error}
\end{figure}

\section{Conclusions}

In this paper, we design contrast-independent partially explicit time discretization methods for wave equations. The proposed methods differ from our previous works, where we first introduced contrast-independent partial explicit methods for parabolic equations \cite{chung_partial_expliict21}. The proposed approach uses temporal splitting based on spatial multiscale splitting. We first introduce two spatial spaces, first account for spatial features related to fast time scales and the second for spatial features related to ``slow'' time scales. Using these spaces, we propose time splitting, where the first equation solves for fast components implicitly and the second equation solves for slow components explicitly. Our proposed method is still implicit via mass matrix; however, it is explicit in terms of stiffness matrix for the slow component (which is contrast independent).  Via mass lumping, one can remove the coupling, which is briefly discussed in the paper. We show a stability of the proposed splitting under suitable conditions for the second space, where the fast components are absent. We present numerical results, which show that the proposed methods provide very similar results as fully implicit methods using explicit methods with the time stepping that is independent of the contrast.

\appendix\section{Proof for $\omega=0$}
\label{sec:appendix}

For the case $\omega=0$, we only consider $V_{H,1}\perp V_{H,2}$. 
The scheme for $\omega=0$ has the form
\begin{align*}
(u_{H,1}^{n+1}-2u_{H,1}^{n}+u_{H,1}^{n-1},w)+\cfrac{\tau^{2}}{2}a(u_{H,1}^{n+1}+u_{H,1}^{n-1}+2u_{H,2}^{n},w) & =0\;\forall w\in V_{H,1}\\
(u_{H,2}^{n+1}-2u_{H,2}^{n}+u_{H,2}^{n-1},w)+\cfrac{\tau^{2}}{2}a(u_{H,1}^{n+1}+u_{H,1}^{n-1}+2u_{H,2}^{n},w) & =0\;\forall w\in V_{H,2}.
\end{align*}
We define an inner product $(\cdot,\cdot)_{m_{\tau}}$ such that 
\[
(u,v)_{m_{\tau}}=(u,v)+\cfrac{\tau^{2}}{2}a(u,v).
\]
We then define two operators $b_{\tau}:V_{H,1}\mapsto V_{H,1}$, $c_{\tau}:V_{H,1}+V_{H,2}\mapsto V_{H,1}$
such that 
\begin{align*}
(b_{\tau}(v_{1}),v)_{m_{\tau}} & =(v_{1},v)\;\forall v\in V_{H,1},\\
(c_{\tau}(u),v)_{m_{\tau}} & :=\frac{\tau^{2}}{2}a(u,v)\;\forall v\in V_{H,1},\\
a(d_{\tau}(u),v) & :=a(u,v)\;\forall v\in V_{H,1}.
\end{align*}
We remark that 
\[
(c_{\tau}(v_{2}),v)_{m_{\tau}}=(v_{2},v)_{m_{\tau}}\;\forall v\in V_{H,1},v_{2}\in V_{H,2},
\]
and 
\begin{align*}
\|b_{\tau}(v_{1})\|_{m_{\tau}}^{2} & =(v_{1},b_{\tau}(v_{1}))\leq\|v_{1}\|\|b_{\tau}(v_{1})\|\\
 & \leq\|v_{1}\|\|b_{\tau}(v_{1})\|.
\end{align*}
Equality holds if and only if $v_{1}$ is a constant function.

We define an inner product $(\cdot,\cdot)_{s_{\tau}}$ such that 
\[
(v_{1},v)_{s_{\tau}}=(v_{1},v)-(b_{\tau}(v_{1}),b_{\tau}(v))_{m_{\tau}}\;\text{for }v_{1},v\in V_{H,1}.
\]
We then define three (semi) norms $\|\cdot\|_{m_{\tau}},\|\cdot\|_{s_{\tau}},\|\cdot\|_{n_{\tau}}$
such that 
\[
\|v\|_{m_{\tau}}^{2}=(v,v)_{m_{\tau}},
\]
\[
\|v_{1}\|_{s_{\tau}}^{2}=(v_{1},v_{1})-\|b_{\tau}(v_{1})\|_{m_{\tau}}^{2},
\]
\[
\|v_{2}\|_{n_{\tau}}^{2}=\cfrac{\tau^{2}}{2}\|v_{2}\|_{a}^{2}-\|c_{\tau}(v_{2})\|_{m_{\tau}}^{2}-\|d_{\tau}(v_{2})\|_{s_{\tau}}^{2}.
\]

It is clear that $\|\cdot\|_{m_{\tau}},\|\cdot\|_{s_{\tau}}$ are
(semi) norms. To show that $\|\cdot\|_{n_{\tau}}$ is a norm in $V_{H,2}$,
we only need to check 
\[
\|v_{2}\|_{n_{\tau}}^{2}>0\;\forall v_{2}\in V_{H,2}.
\]

Since $a(d_{\tau}(v_{2}),v_{2}-d_{\tau}(v_{2}))=0$, we have 
\[
\cfrac{\tau^{2}}{2}\|v_{2}\|_{a}^{2}=\cfrac{\tau^{2}}{2}\|v_{2}-d_{\tau}(v_{2})\|_{a}^{2}+\cfrac{\tau^{2}}{2}\|d_{\tau}(v_{2})\|_{a}^{2}.
\]
We then have
\begin{align*}
\cfrac{\tau^{2}}{2}\|d_{\tau}(v_{2})\|_{a}^{2}-\|d_{\tau}(v_{2})\|_{s_{\tau}}^{2} & =\cfrac{\tau^{2}}{2}\|d_{\tau}(v_{2})\|_{a}^{2}-(d_{\tau}(v_{2}),d_{\tau}(v_{2}))+(b_{\tau}(d_{\tau}(v_{2})),b_{\tau}(d_{\tau}(v_{2})))_{m_{\tau}} = \\
 & \cfrac{\tau^{2}}{2}\|d_{\tau}(v_{2})\|_{a}^{2}-(d_{\tau}(v_{2}),d_{\tau}(v_{2}))+(b_{\tau}(d_{\tau}(v_{2})),d_{\tau}(v_{2})).
\end{align*}
Since 
\begin{align*}
(b_{\tau}(d_{\tau}(v_{2})),d_{\tau}(v_{2}))-(d_{\tau}(v_{2}),d_{\tau}(v_{2})) & =-\cfrac{\tau^{2}}{2}a(b_{\tau}(d_{\tau}(v_{2})),d_{\tau}(v_{2}))+(b_{\tau}(d_{\tau}(v_{2})),d_{\tau}(v_{2}))_{m_{\tau}}-(d_{\tau}(v_{2}),d_{\tau}(v_{2})) = \\
 & -\cfrac{\tau^{2}}{2}a(b_{\tau}(d_{\tau}(v_{2})),d_{\tau}(v_{2})),
\end{align*}
we have 
\[
\cfrac{\tau^{2}}{2}\|d_{\tau}(v_{2})\|_{a}^{2}-\|d_{\tau}(v_{2})\|_{s_{\tau}}^{2}=\cfrac{\tau^{2}}{2}a(d_{\tau}(v_{2})-b_{\tau}(d_{\tau}(v_{2})),d_{\tau}(v_{2})).
\]

Since 
\begin{align*}
(c_{\tau}(v_{2}),c_{\tau}(v_{2}))_{m_{\tau}} & =\cfrac{\tau^{2}}{2}a(c_{\tau}(v_{2}),v_{2})=\cfrac{\tau^{2}}{2}a(c_{\tau}(v_{2}),d_{\tau}(v_{2}))=\\
 & (c_{\tau}(v_{2}),d_{\tau}(v_{2}))_{m_{\tau}}-(c_{\tau}(v_{2}),d_{\tau}(v_{2})) = \\
 & (c_{\tau}(v_{2}),d_{\tau}(v_{2})-b_{\tau}d_{\tau}(v_{2}))_{m_{\tau}} = \\
 & \cfrac{\tau^{2}}{2}a(v_{2},d_{\tau}(v_{2})-b_{\tau}d_{\tau}(v_{2}))=\cfrac{\tau^{2}}{2}a(d_{\tau}(v_{2})-b_{\tau}(d_{\tau}(v_{2})),d_{\tau}(v_{2})),
\end{align*}
we have 
\[
\cfrac{\tau^{2}}{2}\|d_{\tau}(v_{2})\|_{a}^{2}-\|d_{\tau}(v_{2})\|_{s_{\tau}}^{2}-(c_{\tau}(v_{2}),c_{\tau}(v_{2}))_{m_{\tau}}=0.
\]
Therefore, we have 
\[
\|v_{2}\|_{n_{\tau}}^{2}=\cfrac{\tau^{2}}{2}\|v_{2}-d_{\tau}(v_{2})\|_{a}^{2}>0.
\]

\begin{lemma}
For $u_{1}\in V_{H,1}$ and $u_{2}\in V_{H,1}$, we have 
\begin{align*}
 & \|u_{1}\|_{2}^{2}+\cfrac{\tau^{2}}{2}\|u_{2}\|_{a}^{2}-\|b_{\tau}(u_{1})+c_{\tau}(u_{2})\|_{m_{\tau}}^{2} = \\
 & \|u_{1}+d_{\tau}(u_{2})\|_{s_{\tau}}^{2}+\|u_{2}\|_{n_{\tau}}^{2}.
\end{align*}
\end{lemma}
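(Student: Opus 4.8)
The plan is to establish the identity by expanding each of the quadratic forms $\|\cdot\|_{m_\tau}^2$, $\|\cdot\|_{s_\tau}^2$ and $\|\cdot\|_{n_\tau}^2$ through its definition together with the defining (adjointness) relations of $b_\tau$, $c_\tau$, $d_\tau$, and then matching the resulting terms on the two sides. The one structural fact that does the real work is
\[
 c_\tau(v_2) = d_\tau(v_2) - b_\tau\big(d_\tau(v_2)\big),
\]
valid for every $v_2$; this is essentially the computation already carried out just above the statement. I would prove it by pairing the right-hand side with an arbitrary $v \in V_{H,1}$ in $(\cdot,\cdot)_{m_\tau}$: since $a(d_\tau(v_2),v)=a(v_2,v)$ one has $(d_\tau(v_2),v)_{m_\tau}=(d_\tau(v_2),v)+\tfrac{\tau^2}{2}a(v_2,v)$, whereas $(b_\tau(d_\tau(v_2)),v)_{m_\tau}=(d_\tau(v_2),v)$ by the definition of $b_\tau$; subtracting leaves $\tfrac{\tau^2}{2}a(v_2,v)=(c_\tau(v_2),v)_{m_\tau}$, and since all three vectors lie in $V_{H,1}$ and $(\cdot,\cdot)_{m_\tau}$ is an inner product there, the vectors coincide.

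Next I would expand the right-hand side. Because $d_\tau(u_2)\in V_{H,1}$ and $b_\tau$ is linear, $b_\tau(u_1+d_\tau(u_2))=b_\tau(u_1)+b_\tau(d_\tau(u_2))$, so bilinearity of the $s_\tau$ form gives $\|u_1+d_\tau(u_2)\|_{s_\tau}^2=\|u_1\|_{s_\tau}^2+2(u_1,d_\tau(u_2))_{s_\tau}+\|d_\tau(u_2)\|_{s_\tau}^2$. The cross term collapses:
\[
 (u_1,d_\tau(u_2))_{s_\tau}=(u_1,d_\tau(u_2))-(b_\tau(u_1),b_\tau(d_\tau(u_2)))_{m_\tau}=(u_1,d_\tau(u_2))-(u_1,b_\tau(d_\tau(u_2)))=(u_1,c_\tau(u_2)),
\]
using the definition of $b_\tau$ and then the identity above. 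Combining this with $\|u_2\|_{n_\tau}^2=\tfrac{\tau^2}{2}\|u_2\|_a^2-\|c_\tau(u_2)\|_{m_\tau}^2-\|d_\tau(u_2)\|_{s_\tau}^2$ makes the two $\|d_\tau(u_2)\|_{s_\tau}^2$ contributions cancel, so the right-hand side reduces to $\|u_1\|_{s_\tau}^2+2(u_1,c_\tau(u_2))+\tfrac{\tau^2}{2}\|u_2\|_a^2-\|c_\tau(u_2)\|_{m_\tau}^2$.

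On the left-hand side I would use $\|u_1\|^2-\|b_\tau(u_1)\|_{m_\tau}^2=\|u_1\|_{s_\tau}^2$ (the definition of the $s_\tau$ seminorm) together with the expansion of $\|b_\tau(u_1)+c_\tau(u_2)\|_{m_\tau}^2$ into $\|b_\tau(u_1)\|_{m_\tau}^2$, $\|c_\tau(u_2)\|_{m_\tau}^2$ and a cross term $2(b_\tau(u_1),c_\tau(u_2))_{m_\tau}$, which by the definition of $b_\tau$ equals $2(u_1,c_\tau(u_2))$. Collecting and comparing the two sides then finishes the proof --- the orthogonality $V_{H,1}\perp V_{H,2}$ is what lets the two descriptions of $c_\tau$ (via $(\cdot,\cdot)$ and via $\tfrac{\tau^2}{2}a(\cdot,\cdot)$ against test functions in $V_{H,1}$) be used interchangeably, and one should be careful about the sign carried by the $b_\tau(u_1)$--$c_\tau(u_2)$ cross term when matching.

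I expect the only genuine difficulty to be the bookkeeping: there are several cross terms and each has to be pushed into a common pairing by invoking the correct adjointness relation for $b_\tau$, $c_\tau$ or $d_\tau$. All the conceptual content sits in the identity $c_\tau(v_2)=d_\tau(v_2)-b_\tau(d_\tau(v_2))$ and in the consequence $(u_1,d_\tau(u_2))_{s_\tau}=(u_1,c_\tau(u_2))$; once those are in hand, the $\|d_\tau(u_2)\|_{s_\tau}^2$ and $\|b_\tau(d_\tau(u_2))\|_{m_\tau}^2$ terms telescope away and the identity drops out.
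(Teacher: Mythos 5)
Your proposal is correct and takes essentially the same route as the paper: expand both sides using the definitions of the $m_\tau$, $s_\tau$, $n_\tau$ forms and collapse the cross terms via $(u_1,d_\tau(u_2))_{s_\tau}=(b_\tau(u_1),c_\tau(u_2))_{m_\tau}=(u_1,c_\tau(u_2))$, which you package through the equivalent identity $c_\tau(u_2)=d_\tau(u_2)-b_\tau(d_\tau(u_2))$ that the paper derives inline. The sign you flag is genuine but is a typo in the statement rather than a gap in your argument: the identity holds with $\|b_\tau(u_1)-c_\tau(u_2)\|_{m_\tau}^2$ (as in the paper's own proof and in the energy $E^{n+\frac{1}{2}}$ of the subsequent theorem), and note that the orthogonality $V_{H,1}\perp V_{H,2}$ is not actually needed anywhere in this computation.
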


\begin{proof}
We have
\begin{align*}
 & \|u_{1}\|_{2}^{2}+\cfrac{\tau^{2}}{2}\|u_{2}\|_{a}^{2}-\|b_{\tau}(u_{1})-c_{\tau}(u_{2})\|_{m_{\tau}}^{2}=\\
 & \|u_{1}\|_{2}^{2}-\|b_{\tau}(u_{1})\|_{m_{\tau}}^{2}+2(b_{\tau}(u_{1}),c_{\tau}(u_{2}))_{m_{\tau}}+\cfrac{\tau^{2}}{2}\|u_{2}\|_{a}^{2}-\|c_{\tau}(u_{2})\|_{m_{\tau}}^{2}.
\end{align*}
By definition of $(\cdot,\cdot)_{s_{\tau}}$, we have 
\[
\|u_{1}\|_{s_{\tau}}^{2}=\|u_{1}\|_{2}^{2}-\|b_{\tau}(u_{1})\|_{m_{\tau}}^{2}.
\]
\begin{align*}
(u_{1},d_{\tau}(u_{2}))_{s_{\tau}} & =(u_{1},d_{\tau}(u_{2}))-(b_{\tau}u_{1},b_{\tau}d_{\tau}(u_{2}))_{m_{\tau}}=\\
 & (u_{1},d_{\tau}(u_{2}))-(b_{\tau}u_{1},d_{\tau}(u_{2}))=\\
 & (u_{1},d_{\tau}(u_{2}))-(b_{\tau}u_{1},d_{\tau}(u_{2}))_{m_{\tau}}+\cfrac{\tau^{2}}{2}a(b_{\tau}u_{1},d_{\tau}(u_{2})).
\end{align*}
Since $(u_{1},d_{\tau}(u_{2}))=(b_{\tau}u_{1},d_{\tau}(u_{2}))_{m_{\tau}}$,
we have 
\begin{align*}
(u_{1},d_{\tau}(u_{2}))_{s_{\tau}} & =\cfrac{\tau^{2}}{2}a(b_{\tau}u_{1},d_{\tau}(u_{2}))=\cfrac{\tau^{2}}{2}a(b_{\tau}u_{1},u_{2}) = \\
 & (b_{\tau}(u_{1}),c_{\tau}(u_{2}))_{m_{\tau}}.
\end{align*}
Thus, we have 
\begin{align*}
 & \|u_{1}\|_{2}^{2}+\cfrac{\tau^{2}}{2}\|u_{2}\|_{a}^{2}-\|b_{\tau}(u_{1})-c_{\tau}(u_{2})\|_{m_{\tau}}^{2} = \\
 & \|u_{1}\|_{s_{\tau}}^{2}+2(u_{1},d_{\tau}(u_{2}))_{s_{\tau}}+\cfrac{\tau^{2}}{2}\|u_{2}\|_{a}^{2}-\|c_{\tau}(u_{2})\|_{m_{\tau}}^{2} = \\
 & \|u_{1}+d_{\tau}(u_{2})\|_{s_{\tau}}^{2}+\cfrac{\tau^{2}}{2}\|u_{2}\|_{a}^{2}-\|c_{\tau}(u_{2})\|_{m_{\tau}}^{2}-\|d_{\tau}(u_{2})\|_{s_{\tau}}^{2} = \\
 & \|u_{1}+d_{\tau}(u_{2})\|_{s_{\tau}}^{2}+\|u_{2}\|_{n_{\tau}}^{2}.
\end{align*}
\end{proof}
We then define the energy $E^{n+\frac{1}{2}}$ by
\begin{align*}
E^{n+\frac{1}{2}}:= & \|b(u_{H,1}^{n+1})-c(u_{H,2}^{n+1})-b(u_{H,1}^{n})+c(u_{H,2}^{n})\|_{m_{\tau}}^{2}+\|u_{H,2}^{n+1}-u_{H,2}^{n}\|_{2}^{2}-\cfrac{\tau^{2}}{2}\|u_{H,2}^{n+1}-u_{H,2}^{n}\|_{a}^{2} + \\
 & \|u_{H,1}^{n+1}+d_{\tau}(u_{H,2}^{n+1})\|_{s_{\tau}}^{2}+\|u_{H,2}^{n+1}\|_{n_{\tau}}^{2}+\|u_{H,1}^{n}+d_{\tau}(u_{H,2}^{n})\|_{s_{\tau}}^{2}+\|u_{H,2}^{n}\|_{n_{\tau}}^{2}.
\end{align*}

\begin{theorem}
If $V_{H,1}\perp V_{H,2}$, $\omega=0$, we have 
\[
E^{n+\frac{1}{2}}=E^{n-\frac{1}{2}}
\]
and the scheme is stable if 
\[
\sup_{v\in V_{H,2}}\cfrac{\|v\|_{a}^{2}}{\|v\|_{}^{2}}\leq\cfrac{\tau^{2}}{2}.
\]
 
\end{theorem}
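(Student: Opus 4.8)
\emph{Setup and reduction of the scheme.} Write $z^{n}:=b_{\tau}(u_{H,1}^{n})-c_{\tau}(u_{H,2}^{n})\in V_{H,1}$ for the combination that appears in $E^{n+1/2}$. Since $V_{H,1}\perp V_{H,2}$, testing the second equation against $w\in V_{H,2}$ reproduces the $\omega=0$ line of \eqref{eq:split12}, so the two formulations coincide here. I would first eliminate the test function from the first equation: grouping the stiffness term into the modified mass and using the defining relations $(b_{\tau}v,w)_{m_{\tau}}=(v,w)$ and $(c_{\tau}v,w)_{m_{\tau}}=\tfrac{\tau^{2}}{2}a(v,w)$ for $w\in V_{H,1}$, the first equation becomes $(u_{H,1}^{n+1}+u_{H,1}^{n-1}-2z^{n},w)_{m_{\tau}}=0$ for all $w\in V_{H,1}$. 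As the bracketed quantity lies in $V_{H,1}$ and $(\cdot,\cdot)_{m_{\tau}}$ is an inner product there, this yields the pointwise identity
\[
u_{H,1}^{n+1}+u_{H,1}^{n-1}=2z^{n}. \qquad (\star)
\]
Substituting $(\star)$ into the second equation collapses it to a leapfrog relation for $u_{H,2}$ driven by the $V_{H,1}$-quantity $z^{n}$,
\[
(u_{H,2}^{n+1}-2u_{H,2}^{n}+u_{H,2}^{n-1},w)+\tau^{2}a(z^{n}+u_{H,2}^{n},w)=0\quad\forall w\in V_{H,2}. \qquad (\star\star)
\]

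\emph{Conservation $E^{n+1/2}=E^{n-1/2}$.} I would use the preceding lemma, i.e.\ the identity $\|u_{1}\|^{2}+\tfrac{\tau^{2}}{2}\|u_{2}\|_{a}^{2}-\|b_{\tau}u_{1}-c_{\tau}u_{2}\|_{m_{\tau}}^{2}=\|u_{1}+d_{\tau}u_{2}\|_{s_{\tau}}^{2}+\|u_{2}\|_{n_{\tau}}^{2}$, at time levels $n+1$ and $n-1$ to trade the $s_{\tau}$/$n_{\tau}$ terms of $E^{n\pm1/2}$ for $\|u_{H,1}^{\bullet}\|^{2}+\tfrac{\tau^{2}}{2}\|u_{H,2}^{\bullet}\|_{a}^{2}-\|z^{\bullet}\|_{m_{\tau}}^{2}$. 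After the algebraic identity $\|z^{n+1}-z^{n}\|_{m_{\tau}}^{2}-\|z^{n}-z^{n-1}\|_{m_{\tau}}^{2}-\|z^{n+1}\|_{m_{\tau}}^{2}+\|z^{n-1}\|_{m_{\tau}}^{2}=-2(z^{n},z^{n+1}-z^{n-1})_{m_{\tau}}$, and evaluating $(z^{n},z^{n+1}-z^{n-1})_{m_{\tau}}$ through the defining relations of $b_{\tau}$ and $c_{\tau}$ together with the $L^{2}$-orthogonality $(u_{H,2}^{k},z^{n})=0$ and $(\star)$ — which turns the $V_{H,1}$-contribution into $\tfrac12(\|u_{H,1}^{n+1}\|^{2}-\|u_{H,1}^{n-1}\|^{2})$ — the $\|u_{H,1}^{n\pm1}\|^{2}$ terms cancel and the $a$-norm terms regroup by polarization, leaving exactly
\[
E^{n+1/2}-E^{n-1/2}=(u_{H,2}^{n+1}-2u_{H,2}^{n}+u_{H,2}^{n-1},u_{H,2}^{n+1}-u_{H,2}^{n-1})+\tau^{2}a(z^{n}+u_{H,2}^{n},u_{H,2}^{n+1}-u_{H,2}^{n-1}),
\]
which is zero by $(\star\star)$ with $w=u_{H,2}^{n+1}-u_{H,2}^{n-1}\in V_{H,2}$.

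\emph{Stability.} For this part nothing further about the scheme is needed: in its original form $E^{n+1/2}$ is a sum of manifestly nonnegative pieces, namely $\|z^{n+1}-z^{n}\|_{m_{\tau}}^{2}$, the four squares $\|u_{H,1}^{n+1}+d_{\tau}u_{H,2}^{n+1}\|_{s_{\tau}}^{2}+\|u_{H,2}^{n+1}\|_{n_{\tau}}^{2}+\|u_{H,1}^{n}+d_{\tau}u_{H,2}^{n}\|_{s_{\tau}}^{2}+\|u_{H,2}^{n}\|_{n_{\tau}}^{2}$ (nonnegative because $\|\cdot\|_{s_{\tau}}$ and $\|\cdot\|_{n_{\tau}}$ are (semi)norms, as established above, with $\|v_{2}\|_{n_{\tau}}^{2}=\tfrac{\tau^{2}}{2}\|v_{2}-d_{\tau}v_{2}\|_{a}^{2}$), and $\|u_{H,2}^{n+1}-u_{H,2}^{n}\|^{2}-\tfrac{\tau^{2}}{2}\|u_{H,2}^{n+1}-u_{H,2}^{n}\|_{a}^{2}$, which is nonnegative exactly under the stated spectral condition $\tfrac{\tau^{2}}{2}\|v\|_{a}^{2}\le\|v\|^{2}$ for all $v\in V_{H,2}$. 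Hence $E^{n+1/2}\ge0$, it is conserved by the previous step, and it bounds the discrete state; this is the claimed stability.

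\emph{Where the difficulty lies.} The reduction of the scheme and the positivity count above are essentially bookkeeping once the auxiliary operators $b_{\tau},c_{\tau},d_{\tau}$ and the norms $\|\cdot\|_{m_{\tau}},\|\cdot\|_{s_{\tau}},\|\cdot\|_{n_{\tau}}$ are in hand. The real work is the conservation step: the intricate definition of $E^{n+1/2}$ is reverse-engineered precisely so that the cross terms cancel, and these cross terms genuinely do not vanish — only $L^{2}$-orthogonality of $V_{H,1}$ and $V_{H,2}$ is assumed, not $a$-orthogonality, so $a(u_{H,1}^{n\pm1},u_{H,2}^{n\mp1})\neq0$. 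Carrying the cancellation out forces one to use each of the defining identities of $b_{\tau}$ and $c_{\tau}$, the $a$-projection property of $d_{\tau}$, and the preceding lemma in concert, and the computation is unforgiving of sign errors.
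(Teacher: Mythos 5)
Your proof is correct, and it is a genuine (if modest) reorganization of the paper's argument rather than a copy of it. You use the same ingredients — the operators $b_{\tau},c_{\tau},d_{\tau}$, the identity $\|u_{1}\|^{2}+\tfrac{\tau^{2}}{2}\|u_{2}\|_{a}^{2}-\|b_{\tau}u_{1}-c_{\tau}u_{2}\|_{m_{\tau}}^{2}=\|u_{1}+d_{\tau}u_{2}\|_{s_{\tau}}^{2}+\|u_{2}\|_{n_{\tau}}^{2}$, and the same energy — but the mechanics differ: the paper proves conservation by testing the first equation with $b_{\tau}(u_{H,1}^{n+1}-u_{H,1}^{n-1})-c_{\tau}(u_{H,2}^{n+1}-u_{H,2}^{n-1})$ and the second with $u_{H,2}^{n+1}-u_{H,2}^{n-1}$, then summing the terms $B_{1},\dots,B_{7}$ and $C_{1},C_{2}$, whereas you first observe that the first equation is exactly $(u_{H,1}^{n+1}+u_{H,1}^{n-1}-2z^{n},w)_{m_{\tau}}=0$ for all $w\in V_{H,1}$ and hence the strong-form identity $u_{H,1}^{n+1}+u_{H,1}^{n-1}=2z^{n}$ (valid since $b_{\tau},c_{\tau}$ map into $V_{H,1}$ and $(\cdot,\cdot)_{m_{\tau}}$ is an inner product there), which collapses the whole conservation computation to a single test of the second equation with $w=u_{H,2}^{n+1}-u_{H,2}^{n-1}$; I checked the resulting identity for $E^{n+1/2}-E^{n-1/2}$ and it is exact. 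This buys a shorter and more transparent cancellation at the price of invoking positive-definiteness of $(\cdot,\cdot)_{m_{\tau}}$ on $V_{H,1}$, while the paper's test-function route stays entirely in weak form. Two further remarks: the paper's proof stops at $E^{n+1/2}=E^{n-1/2}$ and leaves the positivity/stability count implicit, so your enumeration of the nonnegative pieces (including $\|v_{2}\|_{n_{\tau}}^{2}=\tfrac{\tau^{2}}{2}\|v_{2}-d_{\tau}v_{2}\|_{a}^{2}$ and the term $\|u_{H,2}^{n+1}-u_{H,2}^{n}\|^{2}-\tfrac{\tau^{2}}{2}\|u_{H,2}^{n+1}-u_{H,2}^{n}\|_{a}^{2}$) is a useful completion; and you correctly read the theorem's displayed condition in the form $\tfrac{\tau^{2}}{2}\|v\|_{a}^{2}\le\|v\|^{2}$ on $V_{H,2}$ (the inequality as printed, $\sup\|v\|_{a}^{2}/\|v\|^{2}\le\tau^{2}/2$, is an inversion/typo; compare (2.10) and (3.4)).
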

\begin{proof}
We consider the test functions $b_{\tau}(u_{H,1}^{n+1}-u_{H,1}^{n-1})-c_{\tau}(u_{H,1}^{n+1}-u_{H,1}^{n-1})$
and $u_{H,2}^{n+1}-u_{H,2}^{n-1}$. We have
\begin{align*}
(u_{H,1}^{n+1}-2u_{H,1}^{n}+u_{H,1}^{n-1},b_{\tau}(u_{H,1}^{n+1}-u_{H,1}^{n-1})-c_{\tau}(u_{H,2}^{n+1}-u_{H,2}^{n-1})) + & \\
\cfrac{\tau^{2}}{2}a(u_{H,1}^{n+1}+u_{H,1}^{n-1}+2u_{H,2}^{n},b_{\tau}(u_{H,1}^{n+1}-u_{H,1}^{n-1})-c_{\tau}(u_{H,2}^{n+1}-u_{H,2}^{n-1})) & =0\\
(u_{H,2}^{n+1}-2u_{H,2}^{n}+u_{H,2}^{n-1},u_{H,2}^{n+1}-u_{H,2}^{n-1})+\cfrac{\tau^{2}}{2}a(u_{H,1}^{n+1}+u_{H,1}^{n-1}+2u_{H,2}^{n},u_{H,2}^{n+1}-u_{H,2}^{n-1}) & =0.
\end{align*}
We consider 
\begin{align*}
B_{1} & =(u_{H,1}^{n+1}+u_{H,1}^{n-1},b_{\tau}(u_{H,1}^{n+1}-u_{H,1}^{n-1}))+\cfrac{\tau^{2}}{2}a(u_{H,1}^{n+1}+u_{H,1}^{n-1},b_{\tau}(u_{H,1}^{n+1}-u_{H,1}^{n-1})),
\end{align*}
\[
B_{2}=\tau^{2}a(u_{H,2}^{n},b_{\tau}(u_{H,1}^{n+1}-u_{H,1}^{n-1}))=2(c_{\tau}(u_{H,2}^{n}),b_{\tau}(u_{H,1}^{n+1}-u_{H,1}^{n-1}))_{m_{\tau}},
\]
\[
B_{3}=-\tau^{2}a(u_{H,2}^{n},c_{\tau}(u_{H,2}^{n+1}-u_{H,2}^{n-1}))=-2(c_{\tau}(u_{H,2}^{n}),c_{\tau}(u_{H,2}^{n+1}-u_{H,2}^{n-1}))_{m_{\tau}},
\]
\[
B_{4}=-2(u_{H,1}^{n},b_{\tau}(u_{H,1}^{n+1}-u_{H,1}^{n-1}))=-2(b_{\tau}(u_{H,1}^{n}),b_{\tau}(u_{H,1}^{n+1}-u_{H,1}^{n-1}))_{m_{\tau}},
\]
\[
B_{5}=2(u_{H,1}^{n},c_{\tau}(u_{H,2}^{n+1}-u_{H,2}^{n-1}))=2(b_{\tau}(u_{H,1}^{n}),c_{\tau}(u_{H,2}^{n+1}-u_{H,2}^{n-1}))_{m_{\tau}},
\]
\[
B_{6}=-(u_{H,1}^{n+1}+u_{H,1}^{n-1},c_{\tau}(u_{H,2}^{n+1}-u_{H,2}^{n-1})),
\]
\[
B_{7}=-\cfrac{\tau^{2}}{2}a(u_{H,1}^{n+1}+u_{H,1}^{n-1},c_{\tau}(u_{H,2}^{n+1}-u_{H,2}^{n-1})),
\]
\[
C_{1}=(u_{H,2}^{n+1}-2u_{H,2}^{n}+u_{H,2}^{n-1},u_{H,2}^{n+1}-u_{H,2}^{n-1})+\tau^{2}a(u_{H,2}^{n},u_{H,2}^{n+1}-u_{H,2}^{n-1}),
\]
\[
C_{2}=\cfrac{\tau^{2}}{2}a(u_{H,1}^{n+1}+u_{H,1}^{n-1},u_{H,2}^{n+1}-u_{H,2}^{n-1}),
\]
such that 
\[
\sum_{i=1}^{7}B_{i}=\sum_{i=1}^{2}C_{i}=0.
\]
By the definition of $B_{2},B_{3},B_{4},B_{5}$, we have 
\[
B_{2}+B_{3}+B_{4}+B_{5}=-2(b_{\tau}(u_{H,1}^{n+1})-c_{\tau}(u_{H,2}^{n+1}),b_{\tau}(u_{H,1}^{n})-c_{\tau}(u_{H,2}^{n}))_{m_{\tau}}.
\]

Since $(u,v)_{m_{\tau}}=(u,v)+\cfrac{\tau^{2}}{2}a(u,v)$, we have
\begin{align*}
B_{1}= & (u_{H,1}^{n+1}+u_{H,1}^{n-1},b_{\tau}(u_{H,1}^{n+1}-u_{H,1}^{n-1}))+\cfrac{\tau^{2}}{2}a(u_{H,1}^{n+1}+u_{H,1}^{n-1},b_{\tau}(u_{H,1}^{n+1}-u_{H,1}^{n-1})) = \\
& (u_{H,1}^{n+1}+u_{H,1}^{n-1},b_{\tau}(u_{H,1}^{n+1}-u_{H,1}^{n-1}))_{m_{\tau}}=(u_{H,1}^{n+1}+u_{H,1}^{n-1},u_{H,1}^{n+1}-u_{H,1}^{n-1}) = \\
 & \|u_{H,1}^{n+1}\|_{2}^{2}+\|u_{H,1}^{n}\|_{2}^{2}-\|u_{H,1}^{n}\|_{2}^{2}+\|u_{H,1}^{n-1}\|_{2}^{2}.
\end{align*}
\begin{align*}
C_{1}= & (u_{H,2}^{n+1}-2u_{H,2}^{n}+u_{H,2}^{n-1},u_{H,2}^{n+1}-u_{H,2}^{n-1})+\tau^{2}a(u_{H,2}^{n},u_{H,2}^{n+1}-u_{H,2}^{n-1}) = \\
 & \|u_{H,2}^{n+1}-u_{H,2}^{n}\|_{2}^{2}+\cfrac{\tau^{2}}{2}\|u_{H,2}^{n+1}\|_{a}^{2}+\cfrac{\tau^{2}}{2}\|u_{H,2}^{n}\|_{a}^{2}-\cfrac{\tau^{2}}{2}\|u_{H,2}^{n+1}-u_{H,2}^{n}\|_{a}^{2}- \\
 & \Big(\|u_{H,2}^{n}-u_{H,2}^{n-1}\|_{2}^{2}+\cfrac{\tau^{2}}{2}\|u_{H,2}^{n}\|_{a}^{2}+\cfrac{\tau^{2}}{2}\|u_{H,2}^{n-1}\|_{a}^{2}-\cfrac{\tau^{2}}{2}\|u_{H,2}^{n}-u_{H,2}^{n-1}\|_{a}^{2}\Big).
\end{align*}
\begin{align*}
C_{2} & =\cfrac{\tau^{2}}{2}a(u_{H,1}^{n+1}+u_{H,1}^{n-1},u_{H,2}^{n+1}-u_{H,2}^{n-1})  =(u_{H,1}^{n+1}+u_{H,1}^{n-1},c_{\tau}(u_{H,2}^{n+1}-u_{H,2}^{n-1}))_{m_{\tau}} = \\
 & (u_{H,1}^{n+1}+u_{H,1}^{n-1},c_{\tau}(u_{H,2}^{n+1}-u_{H,2}^{n-1}))+\cfrac{\tau^{2}}{2}a(u_{H,1}^{n+1}+u_{H,1}^{n-1},c_{\tau}(u_{H,2}^{n+1}-u_{H,2}^{n-1})) = \\
 & -B_{6}-B_{7}.
\end{align*}

Therefore, we have 
\begin{align*}
 & \|u_{H,1}^{n+1}\|_{2}^{2}+\|u_{H,1}^{n}\|_{2}^{2}+\|u_{H,2}^{n+1}-u_{H,2}^{n}\|_{2}^{2}+\cfrac{\tau^{2}}{2}\|u_{H,2}^{n+1}\|_{a}^{2}+\cfrac{\tau^{2}}{2}\|u_{H,2}^{n}\|_{a}^{2}-\cfrac{\tau^{2}}{2}\|u_{H,2}^{n+1}-u_{H,2}^{n}\|_{a}^{2} - \\
  & 2(b_{\tau}(u_{H,1}^{n+1})-c_{\tau}(u_{H,2}^{n+1}),b_{\tau}(u_{H,1}^{n})-c_{\tau}(u_{H,2}^{n}))_{m_{\tau}}=\\
 & \|u_{H,1}^{n}\|_{2}^{2}+\|u_{H,1}^{n-1}\|_{2}^{2}+\|u_{H,2}^{n}-u_{H,2}^{n-1}\|_{2}^{2}+\cfrac{\tau^{2}}{2}\|u_{H,2}^{n}\|_{a}^{2}+\cfrac{\tau^{2}}{2}\|u_{H,2}^{n-1}\|_{a}^{2}-\cfrac{\tau^{2}}{2}\|u_{H,2}^{n}-u_{H,2}^{n-1}\|_{a}^{2} - \\
 & 2(b_{\tau}(u_{H,1}^{n})-c_{\tau}(u_{H,2}^{n}),b_{\tau}(u_{H,1}^{n-1})-c_{\tau}(u_{H,2}^{n-1}))_{m_{\tau}}.
\end{align*}
We can observe that
\begin{align*}
 & -2(b_{\tau}(u_{H,1}^{n+1})-c_{\tau}(u_{H,2}^{n+1}),b_{\tau}(u_{H,1}^{n})-c_{\tau}(u_{H,2}^{n}))_{m_{\tau}}=\\
 & \|b_{\tau}(u_{H,1}^{n+1})-c_{\tau}(u_{H,2}^{n+1})-b_{\tau}(u_{H,1}^{n})+c_{\tau}(u_{H,2}^{n})\|_{m_{\tau}}^{2}-\|b_{\tau}(u_{H,1}^{n+1})-c_{\tau}(u_{H,2}^{n+1})\|_{m_{\tau}}^{2}-\|b_{\tau}(u_{H,1}^{n})-c_{\tau}(u_{H,2}^{n})\|_{m_{\tau}}^{2}
\end{align*}
Thus, we have 
\[
E^{n+\frac{1}{2}}=E^{n-\frac{1}{2}}.
\]
\end{proof}
Next, we will do some formal calculations to show that our proposed
energy is close to the continuous energy when $\tau$ is small.
We remark that for $\tau\rightarrow0$, we have 
\begin{align*}
b_{\tau}(u_{1}) & =u_{1}+O(\tau^{2})\;\text{in }L^{2}\\
c_{\tau}(u_{2}) & =O(\tau^{2})\;\text{in }L^{2},
\end{align*}
\begin{align*}
\|u_{1}\|_{s_{\tau}}^{2} & =(u_{1},u_{1})-\|b_{\tau}(u_{1})\|_{m_{\tau}}^{2} \approx \\
 & \cfrac{\tau^{2}}{2}\|u_{1}\|_{a}^{2}+O(\tau^{4}),
\end{align*}
\[
\|u\|_{m_{\tau}}^{2}\approx\|u\|_{}^{2}+O(\tau^{2}),
\]
\begin{align*}
\|u_{2}\|_{n_{\tau}}^{2} & =\cfrac{\tau^{2}}{2}\|u_{2}\|_{a}^{2}-\|c_{\tau}(u_{2})\|_{m_{\tau}}^{2}-\|d_{\tau}(u_{2})\|_{s_{\tau}}^{2} = \\
 & \cfrac{\tau^{2}}{2}\Big(\|u_{2}\|_{a}^{2}-\|d_{\tau}(u_{2})\|_{a}^{2}\Big)+O(\tau^{4}),
\end{align*}
\begin{align*}
E^{n+\frac{1}{2}}:= & \|b(u_{H,1}^{n+1})-c(u_{H,2}^{n+1})-b(u_{H,1}^{n})+c(u_{H,2}^{n})\|_{m_{\tau}}^{2}+\|u_{H,2}^{n+1}-u_{H,2}^{n}\|_{2}^{2}-\cfrac{\tau^{2}}{2}\|u_{H,2}^{n+1}-u_{H,2}^{n}\|_{a}^{2}+\\
 & \|u_{H,1}^{n+1}+d_{\tau}(u_{H,2}^{n+1})\|_{s_{\tau}}^{2}+\|u_{H,2}^{n+1}\|_{n_{\tau}}^{2}+\|u_{H,1}^{n}+d_{\tau}(u_{H,2}^{n})\|_{s_{\tau}}^{2}+\|u_{H,2}^{n}\|_{n_{\tau}}^{2}.
\end{align*}

If $\|u_{\alpha}^{n+1}-u_{\alpha}^{n}\|_{a}=\|u_{\alpha}^{n+1}-u_{\alpha}^{n}\|_{}=O(\tau^{2})$,
we have 
\begin{align*}
E^{n+\frac{1}{2}}= & \|u_{H,1}^{n+1}-u_{H,1}^{n}\|_{}^{2}+\|u_{H,2}^{n+1}-u_{H,2}^{n}\|_{}^{2}+\cfrac{\tau^{2}}{2}\|u_{H,1}^{n+1}+d_{\tau}(u_{H,2}^{n+1})\|_{a}^{2}+\cfrac{\tau^{2}}{2}\Big(\|u_{H,2}^{n+1}\|_{a}^{2}-\|d_{\tau}(u_{H,2}^{n+1})\|_{a}^{2}\Big) + \\
 & \cfrac{\tau^{2}}{2}\|u_{H,1}^{n}+d_{\tau}(u_{H,2}^{n})\|_{a}^{2}+\cfrac{\tau^{2}}{2}\Big(\|u_{H,2}^{n}\|_{a}^{2}-\|d_{\tau}(u_{H,2}^{n})\|_{a}^{2}\Big)+O(\tau^{4}).
\end{align*}

Since
\begin{align*}
a(u_{H,1}^{n+1}+u_{H,2}^{n+1},u_{H,1}^{n+1}+u_{H,2}^{n+1}) & =a(u_{H,1}^{n+1},u_{H,1}^{n+1})+2a(d_{\tau}(u_{H,2}^{n+1}),u_{H,1}^{n+1})+\|u_{H,2}^{n+1}\|_{a}^{2} = \\
 & \|u_{H,1}^{n+1}+d_{\tau}(u_{H,2}^{n+1})\|_{a}^{2}+\Big(\|u_{H,2}^{n+1}\|_{a}^{2}-\|d_{\tau}(u_{H,2}^{n+1})\|_{a}^{2}\Big),
\end{align*}
we have 
\begin{align*}
E^{n+\frac{1}{2}}= & \|u_{H,1}^{n+1}-u_{H,1}^{n}\|_{}^{2}+\|u_{H,2}^{n+1}-u_{H,2}^{n}\|_{}^{2}+\cfrac{\tau^{2}}{2}\|u_{H,1}^{n+1}+u_{H,2}^{n+1}\|_{a}^{2}+\cfrac{\tau^{2}}{2}\Big(\|u_{H,1}^{n}+u_{H,2}^{n}\|_{a}^{2}\Big)+O(\tau^{4})
\end{align*}
and 
\[
\tau^{-2}E^{n+\frac{1}{2}}\approx\|\frac{u_{H}^{n+1}-u_{H}^{n}}{\tau}\|_{}^{2}+\cfrac{\tau^{2}}{2}\Big(\|u_{H}^{n+1}\|_{a}^{2}+\|u_{H}^{n}\|_{a}^{2}\Big).
\]

The Appendix shows that our proposed approach can also be used 
in splitting method
with $\omega=0$.

\bibliographystyle{abbrv}
\bibliography{references,references4,references1,references2,references3,decSol}

\end{document}